\newcommand{\hidethis}[1]{}
\newcommand{\real}{\mathbb{R}}
\newcommand{\comp}{\mathbb{C}}
\newcommand{\nats}{\mathbb{N}}
\newcommand{\lap}{\Delta}
\newcommand{\oh}{\tfrac{1}{2}}
\newcommand{\fixr}{\big|_r}
\newcommand{\fixz}{\big|_\zeta}
\newcommand{\fixx}{\big|_x}
\newcommand{\fixs}{\big|_s}
\newcommand{\fixsig}{\big|_{\sigma}}
\newcommand{\fixrho}{\big|_\rho}
\newcommand{\fixxi}{\big|_\xi}
\newcommand{\tildehack}{\tilde}
\newcommand{\dopv}{\mathcal{D}^{(v)}}
\newcommand{\doph}{\mathcal{D}^{(h)}}
\newcommand{\fop}{\mathcal{F}}
\newcommand{\foph}{\mathcal{F}^{(h)}}
\newcommand{\fopv}{\mathcal{F}^{(v)}}
\newcommand{\goph}{\mathcal{G}^{(h)}}
\newcommand{\gopv}{\mathcal{G}^{(v)}}
\newcommand{\lop}{\mathcal{L}}
\newcommand{\qop}{\mathcal{Q}}
\newcommand{\rop}{\mathcal{R}}
\newcommand{\ary}{\mathcal{A}}
\newcommand{\bry}{\mathcal{B}}
\newcommand{\cry}{\mathcal{C}}
\newcommand{\north}{N}
\newcommand{\rest}{M}
\newcommand{\sprime}{s'}
\newcommand{\lphi}{\log(\phi)}
\newcommand{\Tcross}{T^{\dagger}}
\newcommand{\defeq}{\vcentcolon=}
\newcommand{\eqdef}{=\vcentcolon}
\theoremstyle{plain}
\newtheorem{theorem}{Theorem}
\newtheorem{lemma}[theorem]{Lemma}
\newtheorem{assumption}[theorem]{Assumption}
\newtheorem{definition}[theorem]{Definition}
\theoremstyle{remark}
\newtheorem*{remark}{Remark}
\newtheorem{example}[theorem]{Example}
\DeclareMathOperator{\Rc}{Rc}
\DeclareMathOperator{\Rm}{Rm}
\DeclareMathOperator{\dist}{dist}
\title{Ricci flow recovering from pinched discs} 
\author{Tim Carson}
\thanks{The author has been partially supported by the NSF grant DMS 1148490.}
\address[Timothy Carson]{University of Texas, Department of Mathematics\\
  2515 Speedway stop C1200\\
  Austin, Texas 78712-1202}
\newcommand*{\centerfloat}{%
  \parindent \z@
  \leftskip \z@ \@plus 1fil \@minus \textwidth
  \rightskip\leftskip
  \parfillskip \z@skip}
\begin{document}

\begin{abstract}
  We construct smooth solutions to Ricci flow starting from a class of singular metrics and give asymptotics for the forward evolution.  The singular metrics heal with a set of points (of codimension at least three) coming out of the singular point.   We conjecture that these metrics arise as final-time limits of Ricci flow encountering a Type-I singularity modeled on $\real^{p+1} \times S^q$.  This gives a picture of Ricci flow through a singularity, in which a neighborhood of the manifold changes topology from $D^{p+1} \times S^{q}$ to $S^p \times D^{q+1}$ (through the cone over $S^p \times S^q$.)  

  We work in the class of doubly-warped product metrics.  We also briefly discuss some possible smooth and non-smooth forward evolutions from other singular initial data.
\end{abstract}
\maketitle


\subsection{Description of a flow through a singularity}
In this paper we prove the existence of, and provide asymptotics for, Ricci flow starting from certain singular initial metrics.  We believe that these singular initial metrics can arise as limits of Ricci flow with a finite-time singularity, in a way described by Hamilton in \cite{h_formation}.  We also provide formal arguments for the asymptotics of the flow into the singularity.

The flow through the singularity is illustrated in Figure \ref{figure:full_flow}.  Take $(N^{p+1}, g_N)$ to be a manifold with small curvature, and now consider a metric on $N \times S^q$ ($q \geq 2$) obtained by placing a sphere of some size on every point in $N$.  After running Ricci flow for a short time, the spheres' sizes obey a reaction-diffusion equation while the metric on $N$ also changes.  If we make the initial spheres on $N$ small in some region, we can force a singularity to occur in that region, at one point on $N$.  The singularity will be modeled on $R^{p+1} \times S^{q}$ if the $N^{p+1}$ factor stays relatively smooth. Near the singularity point but before the singular time the manifold has a region with topology $D^{p+1} \times S^q$.  At the singular time this region takes on the topology of the cone over $S^p \times S^q$ (but is not close to a metric cone). 

Afterwards, the $S^p$ factor grows, and the manifold takes on the topology of $S^p \times D^{q+1}$ near the singularity.  The case $p=0$ is the well known standard neckpinch case, see Figure \ref{np_basic}. A somewhat disappointing aspect of this topological change is that if $p \geq 2$ then Ricci flow can perform the opposite topological change (by reversing the roles of $p$ and $q$ in the description above.)

\begin{figure}[tp]
 \centerfloat
  \includegraphics[width=1.4\textwidth]{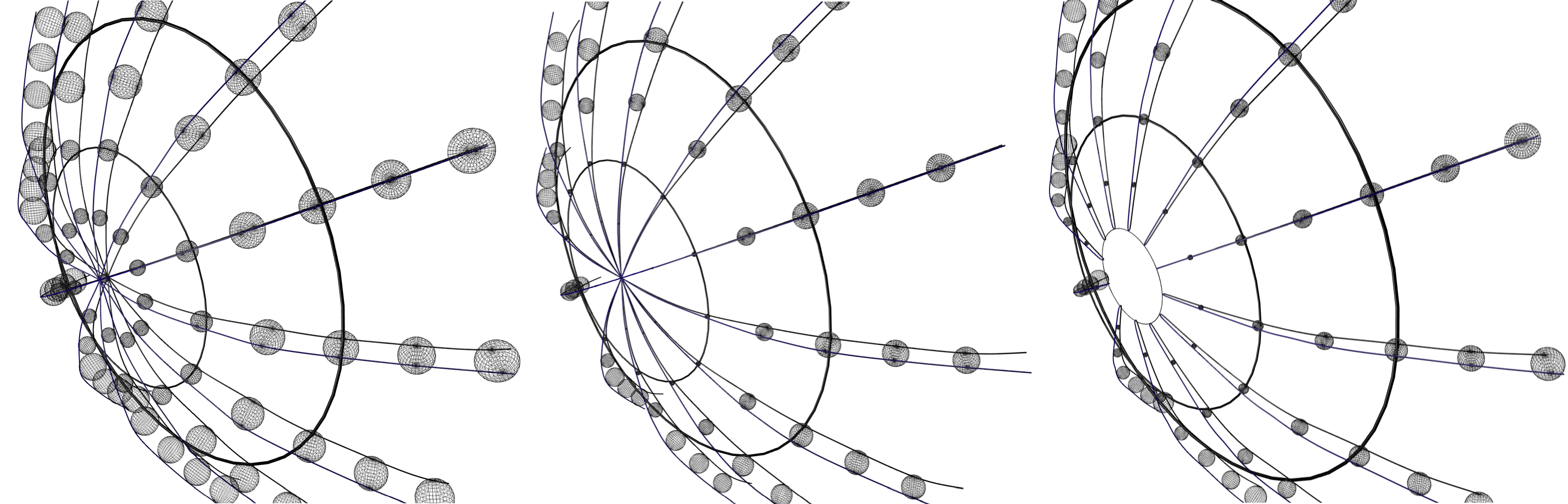}
  \caption{A neighborhood of a doubly warped product undergoing Ricci flow.  Initially, the neighborhood has topology $D^2 \times S^2$.  The size of the $S^2$ at the center of the $D^2$ is quite small.  In the middle, the flow has reached a singularity.  The neighborhood has the topology of the cone over $S^1 \times S^2$.  The flow continues on the right. The manifold is again smooth and the neighborhood has topology $S^1 \times D^3$.  The size of the inner circle is relatively large compared to the curvature of the $S^2$ factor near the inside.  In a rescaling going backwards in time the manifold  will converge to $(\real \times \text{Steady Bryant Soliton})$.  }
  \label{figure:full_flow}
\end{figure}

\begin{figure}[ht]
  \centering
  \includegraphics[scale=.4]{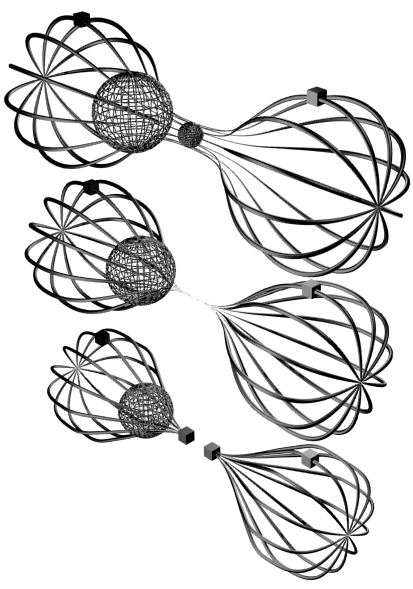}
  \caption{
    A doubly warped product over an interval with fibers $S^0$ and $S^2$ undergoing Ricci flow.  Some $S^0$'s are marked with pairs of cubes.  Having an $S^0$ fiber is somewhat equivalent to asking for the size of the $S^2$ in a singly warped product to be an even function.  As far as intrinsic metric spaces are concerned there is no size of the $S^0$ factor (i.e. if $p=0$ the function $\psi$ has no effect.)  If we were considering mean curvature flow, the size of the $S^0$ factor could be understood as the distance between the two disconnected parts after a neckpinch.    }
  \label{np_basic}
\end{figure}

\subsection{Forward evolutions from singular metrics}
Our work continues the investigation into which singular metrics have a forward Ricci flow which smooths them.  A usual method to construct such flows (which we use) is to consider the Ricci flow from smooth metrics approximating the initial metric.  If these smooth mollifications of the initial metric have uniform short time existence, then one can extract a limiting flow as the modification goes to zero.

We work within the class of doubly warped product metrics, which simplifies the Ricci flow equation to a system of parabolic equations on an interval.  The authors of \cite{ACK} and \cite{recover} similarly construct flows starting from singular initial data, but in the case of a singly warped product over an interval.  As with our work, the motivation was to understand certain flows through singularities.  The singular initial metrics considered there arise from singularities modeled on $\real \times S^q$ which are type I and type II, respectively.

In three dimensions, Ricci flow through singularities is now well understood.  Kleiner and Lott \cite{Singular} show that Perelman's Ricci flow with surgery \cite{Perelman2} can be seen as an approximation to a true flow with some singularities occurring and healing.  One result of \cite{Singular} is that such a flow through singularities in three dimensions has finitely many ``bad worldlines'', so intuitively speaking only finitely many points come out of singularities.  (E.g. the two cubes marking the inner $S^0$ in the third part of Figure \ref{np_basic}.)  It is unsurprising that this should be false in higher dimensions, since the proof comes from a classification of possible ways a singularity can heal. To our knowledge we can hope for a statement about the codimension of such points.  (In our examples they have codimension three or more.)

In \cite{conicalSing}, Gianniotis and Schulze construct flows starting from singular metrics without any symmetry assumptions.  Their initial metrics have isolated singular points modeled on  non-negatively curved cones over $S^n$.  These are conjectured to arise as final time limits of type-I singularities, when the shrinking soliton which the singularity is modeled on is asymptotically Ricci flat.  The metrics immediately heal, using expanding solitons which are asymptotic to the cones (shown to exist by Deruelle \cite{conical_expanders}).  

The examples from \cite{conicalSing} heal with an expander with scale $\sqrt{t}$, whereas the examples here, in \cite{ACK}, and in \cite{recover} heal at a smaller scale.  In all of these examples if for the initial metric, $|\Rm|^2 = O(\gamma(d); d \searrow 0)$ where $d = \dist(\text{Singular set}, p)$, then for the forward evolution $|\Rm|^2 = O(\gamma(\sqrt{t}); t \searrow 0)$.  This can be guessed, for example, by applying Hamilton's Harnack inequality (Corollary 3.1 of \cite{hamharnack}) without bothering to verify the assumptions.  The evolution we consider here has an additional scale at which the geometry is changing: the inner circle on the right side of Figure \ref{figure:full_flow} is asymptotically larger than the scale at which the singularity is healing.

We show that there is a flow $g(t)$ emerging from a singular initial metric $g_{init}$ in the following sense.  (This is identical to the definition in Theorem 1.1 of \cite{conicalSing}.)
\begin{definition}\label{smooth_complete_evo}
  Let $(M, g_{init})$ be a Riemannian manifold, which is not complete and whose metric space completion is not a smooth manifold.  We say $(M, g(t))_{t \in [0, T)}$ is a smooth complete Ricci flow emerging from $(M, g_{init})$ if
\begin{itemize}
  \item $g(0) = g_{init}$.
  \item For $t > 0$, the metric space completion of $(M, g(t))$ is a smooth Riemannian manifold $(\bar M, \bar g(t))$.
  \item $(\bar M, \bar g(t))$ is a solution of Ricci flow on $(0, T)$ and $(M, g(t))$ is a solution of Ricci flow on $[0, T)$.
  \item As $t \searrow 0$, the smooth spaces $(\bar M, \bar g(t))$ approach the metric space closure of $(M, g(t))$ in the Gromov-Hausdorff sense.
  \end{itemize}
\end{definition}
Definitions of weak Ricci flows have been emerging recently, but there is no general existence which is strong enough to deal with our situation.  In \cite{weakhaslhofernaber}, Haslhofer and Naber characterize Ricci flow in terms of analytic estimates on path space.
Sturm also provides a characterization of Ricci flow in \cite{weaksturm} which makes sense for metric-measure spaces.  This characterization is stated for a family of metric-measure spaces on a \emph{fixed} topology, so some further work is needed to deal with the case we consider here.  In \cite{weaksturm2}, Kopfer and Sturm provide estimates for this definition, but under the assumption that distances are log-lipschitz in time.  This is certainly impossible in our case because certain distances go to zero in finite backward time.

While a forward evolution satisfying Definition \ref{smooth_complete_evo} is nice, the following example shows that this is not something we can always demand.
\begin{example}\label{example:scarring}
  Consider first the example where a Riemannian manifold is a warped
  product of $S^4$ over a circle $S^1$, which at some point sees a neckpinch.
  The manifold can recover by becoming an $S^5$. Away from two poles
  the manifold will continue to be a warped product of
  $S^4$ over an interval.  Now consider instead the situation where we replace the
  initial metric's $S^4$ fiber, at each point on the $S^1$, with an
  $S^2 \times S^2$ with the standard metric and the same scalar curvature as the original
  $S^4$.  Since $S^2 \times S^2$ is an Einstein manifold, according to equations \eqref{rc_base} and \eqref{rc_fiber}
  below, the Ricci curvature essentially does not care that the fiber
  is an $S^2 \times S^2$ instead of an $S^4$, and Ricci flow acts the
  same way up to the singularity.  Also, any weak
  definition of Ricci flow should allow us to continue in the same way as when the
  fiber was $S^4$.  This means that after the singular time, there are
  ``scars'' for a reasonable forward evolution; at the two new
  points there are two conical singularities.  Near these conical
  singularities, the full curvature tensor is unbounded but the Ricci
  curvature is bounded (at any time after the singular time).
\end{example}
Note that this example is unstable because $S^2 \times S^2$ is unstable.  By using, instead of $S^2 \times S^2$, an Einstein manifold such that the Ricci flat cone over it is stable, the example may become stable.  (For example, we may use $\comp P^2$ by \cite{stabRcFlatCone}.)

\subsection{Precise statements}

The metrics we consider are of the following form.  The space has topology $(0,1) \times S^p \times S^q$ outside of a lower dimensional set.  The metrics are doubly warped products over an interval; at each time they are given by
\begin{align}
  \sprime(x)^2 dx^2 + \psi(x)^2 g_{S^p} + \phi(x)^2 g_{S^q}, \label{dwp_form}
\end{align}
where $x$ is the coordinate on $(0,1)$, and $g_{S^p}$, $g_{S^q}$ are the lifts of round metrics on the spheres.  We use the following notation for certain subsets of metrics.
\begin{definition}\label{north}
  Suppose $(M, g)$ has the form \eqref{dwp_form} outside of a lower-dimensional subset.  Define the arclength coordinate by $ds = s'dx$ and $s=0$ at $x=0$, i.e. $s(x) = \int_0^xs'(\tilde x) d\tilde x$.  We call the set $\{s = 0\}$ the tip.  For $s_*$ given let
  \begin{align}
    \label{eq:5}
    \north_{s < s_*} = \{(x, a, b): s(x) < s_*, a \in S^p, b \in S^q\}
  \end{align}
  and let $\north_{r < r_*}$ the connected component of $\{(s, a, b): \phi(s) < r_* \}$ which borders the tip, when this is well defined.  Let
  \begin{align}
    \rest_{s > s_*} = M - \north_{s < s_*}, \qquad
    \rest_{r > r_*} = M - \north_{r < r_*}.
  \end{align}
\end{definition}
Our existence theorem precisely is as follows.
\setcounter{topnumber}{0}
\begin{theorem}\label{main_theorem}
  Suppose $(M, g_{init})$ satisfies the following properties.
  \begin{enumerate}
  \item
    The manifold $M$ has the topology $ (0, L) \times S^p \times S^q$, and the metric has the form
    $$g_{init} = ds^2 + \psi_{init}^2 g_{S^p} + \phi_{init}^2 g_{S^q},$$
    where $\psi_{init}$ and $\phi_{init}$ are functions of $s$ and $g_{S^p}$ and $g_{S^q}$ are lifts of the round metrics of radius one.  The length $L>0$ may be $\infty$.
  \item\label{grad_bnd_assume}
    The warping functions $\phi_{init}$ and $\psi_{init}$ satisfy the gradient bound
    \begin{align}
      (\partial_s \phi_{init})^2 =  |\nabla \phi_{init}|^2 \leq 1, \qquad (\partial_s \psi_{init})^2 \leq 1. \label{grad_bnd}
    \end{align}
  \item \label{rm_bnd}
    For any $s_1$ and $k \in \nats \cup \{0\}$, $|\nabla^k \Rm|$ is bounded in $\rest_{s > s_1}$.
  \item \label{inc_rad}
    There is $s_2>0$ such that $\phi_{init}$ is increasing in $\north_{s < s_2}$.
  \item 
    As $s \searrow 0$,
    \begin{align}
      \begin{split}
        \label{assumed_asymptotics}
      \psi_{init} = s(1 + o(1)), \qquad \phi_{init}(s) = k \frac{s}{\sqrt{|\log s|}} (1 + o(1)) \\
      \partial_s \phi_{init}(s) = k \frac{1}{\sqrt{|\log s|}} (1 + o(1)) 
      \end{split}
    \end{align}
    for some $k \in \real_{>0}$.
  \end{enumerate}
  Then there is a smooth complete Ricci flow $(M, g(t))_{t \in [0, T)}$ emerging from $(M, g_{init})$.  For $t > 0$ the metric space completion of $(\north_{s < s_*}, g(t))$ has the topology $S^p \times D^{q+1}$. The metric continues to satisfy items \ref{grad_bnd_assume} through \ref{inc_rad}. 
\end{theorem}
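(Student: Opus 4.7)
The proof would follow the standard strategy of approximating the singular initial data by smooth metrics, flowing each, and extracting a limit. For each small $\epsilon > 0$, I would construct a smooth doubly-warped product metric $g_\epsilon$ on a manifold $M_\epsilon$ obtained from $M$ by excising the tip region $\{s < \epsilon\}$ and attaching a smooth cap of topology $S^p \times D^{q+1}$. In the cap one arranges for $\phi_\epsilon$ to decrease monotonically to zero smoothly (so that the $S^q$ factor closes up at an interior point) while $\psi_\epsilon$ remains bounded below by a quantity comparable to $\epsilon/\sqrt{|\log \epsilon|}$, matching the asymptotic profile \eqref{assumed_asymptotics} at $s = \epsilon$. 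A natural model for the cap is a suitably rescaled piece of the Bryant soliton crossed with a (near-)flat $S^p$. As $\epsilon \searrow 0$, the $g_\epsilon$ converge to $g_{init}$ smoothly on each $\rest_{s > s_*}$ and the excised caps shrink into the singular tip.

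Given these approximations, standard short-time existence produces a Ricci flow $g_\epsilon(t)$ on each $M_\epsilon$. The main technical work is to establish uniform estimates independent of $\epsilon$ on a common time interval $[0, T)$: (i) preservation of the gradient bounds $(\partial_s \phi)^2 \leq 1$ and $(\partial_s \psi)^2 \leq 1$ via the maximum principle applied to the evolution equations of $\partial_s \phi$ and $\partial_s \psi$ in the doubly-warped-product parabolic system; (ii) uniform $C^k$ curvature bounds on each region $\rest_{s > s_*}$, obtained by combining the $t = 0$ bound in item \ref{rm_bnd} with local Shi-type parabolic smoothing; and (iii) tip control showing that, in a parabolic rescaling about the tip, $g_\epsilon(t)$ is well approximated by a Bryant soliton at the natural scale $\sqrt{t}/\sqrt{|\log \sqrt{t}|}$ suggested by \eqref{assumed_asymptotics}.

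With uniform bounds in hand, Hamilton-style compactness extracts a limiting Ricci flow $g(t)$ on $M$ for $t \in (0, T)$. Its smooth completion $(\bar M, \bar g(t))$ arises as the smooth limit of the caps of $g_\epsilon(t)$, retaining the topology $S^p \times D^{q+1}$ near the tip. To verify Gromov--Hausdorff convergence of $\bar g(t)$ to the metric completion of $g_{init}$ as $t \searrow 0$, I would combine smooth convergence on each $\rest_{s > s_*}$ with an estimate showing that the diameter of the completed tip cap vanishes as $s_*, t \searrow 0$ together. Persistence of items \ref{grad_bnd_assume}--\ref{inc_rad} for the limit then follows: the gradient bounds pass to the limit, $C^k$ curvature bounds on each $\rest_{s > s_1}$ come from the uniform estimates, and monotonicity of $\phi$ in some $\north_{s < s_2}$ is preserved by the maximum principle applied to the evolution of $\partial_s \phi$.

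The central obstacle is the tip analysis, step (iii). The approximations $g_\epsilon$ all degenerate as $\epsilon \searrow 0$ with unbounded curvature at the tip, so classical parabolic estimates cannot be applied uniformly across the family. The argument would hinge on a matched asymptotic expansion: an \emph{inner} description, in parabolically rescaled coordinates near the tip, in which the flow converges to a Bryant soliton solution (times a flat $S^p$ factor); an \emph{outer} description in which the flow is controlled as a small perturbation of the initial profile \eqref{assumed_asymptotics}; and a matching of the two in an overlap region. The doubly-warped setting is genuinely harder than the singly-warped analogues of \cite{ACK} and \cite{recover} because one must simultaneously track the collapsing $S^q$ direction and the evolution of the $S^p$ direction, which is collapsing at $t = 0$ but opens up for $t > 0$, with interaction terms entering through the warping-function coupling in the Ricci tensor.
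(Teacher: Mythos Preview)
Your outline matches the paper's strategy: mollify the singular metric, establish uniform estimates via barriers derived from matched asymptotics, and extract a limit by compactness. A few points where the paper's execution is sharper than your sketch are worth flagging.

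First, the matched-asymptotic structure has \emph{three} regions rather than two: an outer region $r \gg \sqrt{t}$ (a time-linearization of the initial data), an intermediate parabolic region $r \sim \sqrt{t}$, and the tip region $r \sim \sqrt{t/|\log t|}$ where the Bryant soliton appears. The outer and tip barriers cannot be glued directly; the parabolic layer is essential. Second, the paper works throughout in the coordinate $r = \phi$, writing the system for $v = (\partial_s\phi)^2$ and $h = \psi^2$; the preserved bound $(\partial_s\psi)^2 = v h_r^2/(4h) \le 1$ is precisely what lets one construct barriers for $v$ and $h$ separately without controlling cross-derivatives, so the coupling you flag as the main difficulty is handled rather cheaply. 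Third, the curvature control near the tip does \emph{not} come from Shi-type geometric estimates but from scalar parabolic regularity (De Giorgi--Nash--Moser followed by Schauder) applied to the $(v,h)$ system once it is trapped between barriers; the author explicitly notes that the sectional-curvature pinching of \cite{ACK} did not transfer to this setting.

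Finally, your plan does not address how to keep the solution from escaping the barriers at the spatial edge $r = r_*$ of the barrier domain. The paper handles this not with collars (as in \cite{ACK}) nor with pseudolocality, but by using the interior PDE regularity in a strip near $r_*$ to obtain a uniform speed limit on $(v,h)$ there; this is a genuine technical point your outline would need to confront.
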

\setcounter{topnumber}{2}

\begin{remark}
  We can easily relax our assumptions in a few ways, but do not to save
  on notation.  The gradient bound assumption \eqref{grad_bnd} can be
  relaxed to a bound by any $C \geq 1$, because any such bound is
  preserved by Ricci flow.  We could assume the same singular
  asymptotics at $s = L$ as well; then all of our estimates away from
  $s = 0$ would turn into estimates away from both endpoints of the
  interval.  The situation with two singularities happens for example
  with standard neckpinches on $S^1 \times S^p$.  Finally, our proof carries through for some other asymptotic assumptions on the initial data, see Section \ref{conjectured}.
\end{remark}

Our proof also gives asymptotics for the forward evolution.  We split the forward evolution into three regions, which we describe below.  Let $r = \phi$, $v = \phi_s^2$, and $h = \psi^2$, as in Section \ref{r_coordinate_section}.  Then:
\begin{itemize}
\item \textbf{Outer region.}  As $r, t$ go to $(0, 0)$ with $\rho = r/\sqrt{t} \to \infty$ we have
  \begin{align}
    \label{eq:74}
    v &= (1 + o(\rho^{-2}))v_{init},\\
    h &= (1 + o(\rho^{-2}))h_{init}.
  \end{align}
  Strictly away from the tip, the metric is smooth.  The outer region gives an idea of how long the initial metric is relatively unaffected by the high curvature region at the tip.  This region can be derived by applying the pseudolocality theorem to the initial metric at points near the tip.  the scale at which we can apply pseudolocality is $r$, and it gives regularity for time $t < \sqrt{r}$.
\item \textbf{Parabolic region.} As $r, t$ goes to $(0,0)$ with $\rho = \sqrt{\frac{1}{t}}r$ bounded but $\zeta = \sqrt{\frac{|\log(t)|}{t}}r \to  \infty$,
  \begin{align}
    \label{eq:16}
    v &= (1 + o(\zeta)^{-1/2})\frac{2k^2}{|\log t|}(1 + 2(q-1)\rho^{-2}) \\
    \frac{1}{t}h &= (1 + o(\zeta)^{-1/2})\frac{|\log t|}{2k^2}(\rho^2 + 2(q-1))
  \end{align}
  The parabolic region is where the metric looks like a relatively long cylinder.  The radius goes from of $\phi^2 g_{S^q}$ goes from $\sqrt{t}$ to $\sqrt{t/|\log t|}$ in length $c\sqrt{t |\log t|}$.

  In both parabolic and outer regions, $\phi_s^2$ goes to zero as $(r, t)$ goes to $(0,0)$.  As a result, both $\phi_s^2$ and $\psi$ approximately satisfy a linear first-order equation in $r = \phi$.
\item \textbf{Tip region.} As $r,t$ goes to $(0,0)$ with $\zeta = \sqrt{\frac{|\log(t)|}{t}} r$ bounded,
  \begin{align}
    \label{eq:17}
    v &= \left(1 + o\left(\frac{1}{|\log t|}\right)\right)
        \bry\left( \frac{\zeta}{4(q-1)k^2}  \right) \\
    \frac{|\log t|}{t}h
      &= \left(1 + o\left(\frac{1}{|\log t|}\right)\right)
        |\log t|^2
        \frac{q-1}{k^2}
  \end{align}
  Here $\bry:[0, \infty) \to \real$ is a function such that
  \begin{equation}
    \label{eq:88}
    \frac{dr^2}{\bry(r)} + r^2 g_{S^q}
  \end{equation}
  is a Bryant soliton.

  In the parabolic and tip regions together, the metric looks like a standard solution crossed with $\real^p$.    In the tip region, it looks like the steady soliton $(\text{Bryant Soliton}) \times (\text{Flat }\real^p)$, scaled by $\sqrt{t/|\log t|}$.  At the scale $\sqrt{t/|\log t|}$, the curvature of the $\real^p$ factor is $\left|\Rm\big|_{TS^p}\right|  = O(|\log t|^{-1})$.
\end{itemize}



\subsection{Conjectured flows}\label{conjectured}
In Appendix \ref{before_asymptotics} we formally derive the asymptotics for a flow into the singular metric we consider.  Briefly, the $D^{p+1}$ factor stays relatively flat, which is unsurprising given the stability of flat metrics under Ricci flow.  Then the factor $\phi^2 g_{S^q}$ nearly obeys a reaction-diffusion equation on $\real^{p+1}$.

One may ask about forward evolutions from initial singular metrics with asymptotics besides \eqref{assumed_asymptotics}.  First consider keeping $\psi_{init}(s) = s(1 + o(1))$ and changing $\phi$.  We claim that our proof and construction of barriers can be generalized to this case, assuming the condition that as $s \searrow 0$
\begin{align}
  \phi_{init}(s) = o(s), \quad s \partial_s \phi_{init}(s) = a \phi(s) (1 + o(1)), \quad s^2 \partial^2_s \phi(s) = O(\phi(s)).
\end{align}
Here $a \in \real$ is a constant.  In this case the asymptotics are similar but the Bryant soliton which forms at the tip has its metric scaled by $\phi_{init}(\sqrt{t})^2$.

We may also ask about changing $\psi_{init}$.  For our initial data,
\begin{align}
  \label{eq:18}
  \psi_{init}^2 = \frac{1}{k^2}\phi_{init}^2|\log \phi_{init}|(1 + o(1)).
\end{align}
Our method also can deal with any case when $\psi$ is asymptotically strictly larger than $\sqrt{\frac{p-1}{q-1}}\phi_{init}$.  Here $c_0 = \sqrt{\frac{p-1}{q-1}}$ is the constant so that $c_0^2 g_{S^p} + g_{S^q}$ is an Einstein with $\Rc = (q-1)g$.  The ratio $\frac{\psi_{init}}{c_0 \phi_{init}}$ controls how quickly the $S^p$ factor approaches flat $\real^{p}$.  

An interesting case that our proof \emph{cannot} handle is
\begin{align}
  \label{eq:27}
  \psi_{init} = c_0 \phi_{init} \left( 1 + |\log(\phi_{init})|^{-1} \right).
\end{align}
In this case, formal asymptotics tell us that we should be able to glue in an Ivey soliton at the tip. (This is a steady soliton found by Ivey \cite{ivey} on a doubly warped product of two spheres. It closes off to be a $S^p \times D^{q+1}$ at $s=0$ and at infinity the size of the two factors is asymptotically equal.)  Our proof does not work because we used a loose gradient bound (Section \ref{section:gradbnd}) which controls the size of the coupling terms between the evolution of $\phi$ and $\psi$.  In this case, the coupling is strong and the gradient bound is not sufficient.

Our motivation is to better understand Example \ref{example:scarring}: in the case $\psi_{init} = c_0 \phi_{init}$ exactly, there is a forward evolutions which continue to have infinite $|\Rm|$, and both $\phi$ and $\psi$ continue to be $0$ at $s=0$.  It is conceivable that there are also forward evolutions in which either $\phi$ or $\psi$ expands. As mentioned, when $\psi_{init} = c_0\phi_{init} \left(1 + |\log(\phi_{init})|^{-1} \right)$ the $\psi$ factor expands and the flow heals with an Ivey soliton at the tip.  What if we consider $\psi_{init} = c_0 \phi_{init} \left( 1 + \epsilon |\log(\phi_{init})|^{-1} \right)$ and send $\epsilon \searrow 0$?  For each $\epsilon$ the initial metric has a smooth forward evolution, with an Ivey soliton in the tip region.  Unfortunately $\epsilon$ controls the difference in the size of the two sphere factors at the edge of the tip region.  As $\epsilon \searrow 0$ the metric in the tip region approaches the singular Bryant soliton where the $S^{p+q}$ is replaced by the Einstein $S^p \times S^q$, as in the singular forward flow in Example \ref{example:scarring}.  Therefore we find it unlikely that a smooth forward evolution exists.

\subsection{Description of the Proof}

Work on this project honestly began with the formal asymptotic calculations in Section \ref{formal_section}.  These calculations gave an initial idea of how the metric may evolve, and importantly give the basis for constructing the barriers in Section \ref{barriers_section}.  Despite this work of constructing barriers being at the core of the proof, we place it later.  Sections \ref{proof_main} is lighter reading and may be understood without knowing all the details in Section \ref{barriers_section}.

Section \ref{proof_main} (and in particular Lemma \ref{convergence_lemma}) proves the Theorem \ref{main_theorem}, using the barriers constructed in Section \ref{barriers_section}.  The main theorem is proved by constructing mollified flows which have uniform existence time and approximate the initial data.  In \cite{ACK} uniform existence time was found by using a bound for the difference of two sectional curvatures, which implies a bound on the difference between first and second derivatives of the warping function.  We were not able to bring that bound to our situation.  Instead we use some regularity results for parabolic PDE to control solutions to Ricci flow between our barriers.

Another difference here is that we do not use collars as in Section 6.1 of \cite{ACK}.  The collars are used to show that the solution to a PDE does not cross some barriers at the boundary of where they are defined.  The collars work because they are themselves barriers, but it is easy to verify that the solution does not cross the collars at the boundary of where the collars are defined.  These would have been technically more difficult to construct in our situation.  A simple alternative would be to use the pseudolocality theorem.  In our situation we can also use regularity to get a uniform speed limit on the solution at the boundary of where our barriers are defined.  One has to do some gymnastics to make sure that the boundary of the barrier region is in the interior of a domain where the solution satisfies a uniformly parabolic equation.  This trick is applicable to other problems where one needs to glue parabolic equations.

\section{Warped Products}
\subsection{Ricci flow on singly-warped products}

In this section review the case where a solution to Ricci flow on $(B^{n_B} \times F^{n_F})$ is given by 
\begin{align}
  \label{singly_warped_form}
  g(t) = g_B(t) + \phi(t) g_F
\end{align}
with $\phi(t): B \to \real$ for each $t$.  The ansantz \eqref{singly_warped_form} is only preserved by Ricci flow if $(F^{n_F}, g_F)$ an Einstein metric: $\Rc(g_F) = \mu g_F$.  In that case the Ricci curvature is given as follows:
\begin{align}
  \Rc(X,X) &= \Rc_{g_B}(X,X) - n_F \nabla_X \nabla_X \lphi - n_F(\nabla_X\lphi)^2 \label{rc_base}\\
  \Rc(V,V) &= (\mu - \phi^2(\lap_{g_B} \lphi + n_F |\nabla \lphi|^2)) g_F(V,V) \label{rc_fiber}\\
  \Rc(X,V) &= 0 \label{rc_cross}
\end{align}
Here $X$ is tangent to the base $B$ and $V$ is tangent to the fiber $F$.

Under Ricci flow, the metric $g_B$ on $B$ and the function $\phi$ evolve by
\begin{align}
  \partial_t g_B &= - 2 \Rc[g_B] + 2 n_F (\nabla \nabla)^B \lphi + 2 n_F \nabla \lphi \otimes \nabla \lphi , \label{gB_evo_single}\\
  \partial_t \phi &= \phi \lap_{g}\lphi - \mu \phi^{-1}
                    = \phi\lap_{g_B} \lphi + n_F\phi|\nabla \lphi|^2 - \mu \phi^{-1}  \\
                  &= \lap_{g_B}\phi - (n_F-1) \frac{(\mu/(n_F-1)) - |\nabla \phi|^2}{\phi^2}\label{phi_evo_single}.
\end{align}
We find the second equation to be pleasing because Ricci flow shows its reaction-diffusion nature quite explicitly.

The system \eqref{gB_evo_single}, \eqref{phi_evo_single} does not depend on $(F, g_F)$ but only on $n_F$ and the constant $\mu$ such that $\Rc_F = \mu g_F$.  Consider the case when $(F, g_F)$ is a space of constant sectional curvature $(\mu/(n_F-1))$.  Then the sectional curvature of a plane tangent to the $F$ factor is given by
\begin{align}
  \label{eq:59}
  L = \frac{(\mu/(n_F-1)) - |\nabla \phi|^2}{\phi^2}
\end{align}
and the sectional curvature of a plane spanned by a vector on the $F$ factor and a unit vector $X$ on the $B$ factor is given by
\begin{align}
  \label{eq:70}
  -(\nabla_X \lphi)^2 - \nabla_X\nabla_X\lphi,
\end{align}
and the sectional of planes tangent to the base are the same as the sectional curvatures for $g_B$.  Usually we will have $\mu/(n_F-1) = 1$ and $F = S^{n_F}$.

\subsection{Doubly warped products}
A metric of the form \eqref{dwp_form} is a singly warped product in two ways; we can consider the fiber to be the $S^p$ factor of the $S^q$ factor.  Such a metric evolves by Ricci flow if
\begin{align}
  \partial_t \fixx \psi &= \partial_s^2\psi
                          + \left( p \frac{\partial_s\psi}{\psi} + q \frac{\partial_s\phi}{\phi} \right)\partial_s\psi
                          - \psi^{-1}\left(\partial_s\psi\right)^2 - (p-1)\psi^{-1}, \label{evo_psi}\\
  \partial_t \fixx \phi &= \partial_s\phi
                          + \left( p \frac{\partial_s\psi}{\psi} + q \frac{\partial_s\phi}{\phi} \right)\partial_s\phi
                          - \phi^{-1}\left(\partial_s\phi\right)^2 - (q-1)\phi^{-1}, \label{evo_phi}\\
  \partial_t \fixx \log \sprime &= p\psi^{-1}\partial_s^2\psi + q\phi^{-1}\partial_s^2\phi. \label{evo_sprime}
\end{align}
Here $\partial_s$ is the vector field $\frac{1}{s'}\partial_x$.  We use the notation $\partial_t \fixx$ to mean a time derivative taken with the $x$ coordinate fixed; space derivatives are always taken with the time coordinate fixed.  The evolution can be derived from \eqref{phi_evo_single} with the knowledge that
\begin{equation*}
  \partial_s^2 + \left( q \frac{\psi_s}{\psi} + p \frac{\phi_s}{\phi} \right) \partial_s
\end{equation*}
is the laplacian for $g$, acting on a function which is constant on the $S^p, S^q$ factors.  (Compare this to the laplacian acting on radially symmetric functions in $(\real^{p+1}, g_{std} = ds^2 + s^2 g_{S^p})$: $(\partial_s^2 + p s^{-1}\partial_s)$.)

These metrics have five sectional curvatures:
\begin{align}
  \label{eq:101}
  L_\phi = \frac{1- \left( \partial_s \phi \right)^2 }{\phi^2}, \quad L_\psi = \frac{1 - \left(\partial_s \psi \right) }{\psi^2}
\end{align}
are the sectional curvatures tangent to the $S^q$ and $S^p$;
\begin{align}
  \label{eq:102}
  K_\phi = -\frac{\partial_s^2 \phi }{\phi}, \quad K_\psi = -\frac{\partial_s^2 \psi}{\psi}
\end{align}
are the sectional curvatures for a plane with tangents $\partial_s$ and a vector on one of the spheres; and
\begin{align}
  \label{eq:103}
  J = \frac{\phi_s \psi_s}{\phi \psi}
\end{align}
is the sectional curvature for planes spanned by a pair of vectors tangent to each of the spheres.

\subsection{Boundary conditions}
The topology and smoothness of the metric-space completion of
\begin{align}
  s'(x)^2 dx^2 + \psi(x)^2 g_{S^p} + \phi(x)^2 g_{S^q}
\end{align}
depends on the boundary conditions at $x = 0$ and $x=1$.  Let us concentrate on the left end, the $x = 0$ boundary.  First, if $s'(x)$ is not integrable near $x=0$, then the left end has infinite length and the completion has the topology $(-\infty, 0) \times S^p \times S^q$ near there.  

Now, consider the case when $s'(x)$ is integrable near $x=0$.  If
\begin{align}
  \label{eq:80}
  \psi(0) = 0, \quad \phi(0) = 0,
\end{align}
then the left end closes to the cone over $S^p \times S^q$ and cannot be a smooth manifold; this is the case with our initial data.  If
\begin{align}
  \psi(0) > 0,\quad \phi(0) = 0, \label{warpingfn_bcs}
\end{align}
then the left end closes to $D^{q+1} \times S^p$; this is the case with our forward evolution.  More conditions are needed for the metric to be smooth, namely that
\begin{align}
  \partial_s \psi \bigg|_{x = 0} = 0, \quad \partial_s \phi \bigg|_{x = 0} = 1, \label{derivative_bcs}
\end{align}
and furthermore any odd derivatives of $\psi$ vanish, and any even derivatives of $\phi$ vanish.

\subsection{The gradient bound}\label{section:gradbnd}

There is an important gradient bound for Ricci flow on a singly-warped product $g_B + \phi^2 g_F$.  From the singly-warped evolution equation \eqref{gB_evo_single}, \eqref{phi_evo_single} we can use the Bochner formula to derive that the \emph{scale invariant} quantity $b = |\nabla \phi|^2$ satisfies
\begin{align}
  \label{eq:54}
  \partial_t b
  &= \lap_{g} b  - 2 |\nabla \nabla \phi|_{g_B}^2 - 2 g(\nabla b, \nabla \lphi ) \\
  &+ 2 b \phi^{-2}(\mu - (n_F-1)b).
\end{align}
In our case $\mu > 0$.  Applying the maximum principle in case easily yields the following lemma.
\begin{lemma}\label{singly_gradbnd}
  Suppose $(B, g_B(0))$ is complete and compact.  If a smooth solution $g(t)$ to Ricci flow is of the form \eqref{singly_warped_form} and $\mu > 0$, then a global bound on $|\nabla (e^u)|^2$ by $\frac{C}{n_F-1}$ is preserved for any $C \geq \mu$. 
\end{lemma}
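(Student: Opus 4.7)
The plan is to apply the parabolic maximum principle directly to the scalar quantity $b = |\nabla \phi|^2$, whose evolution is given by \eqref{eq:54}. Compactness of $B$ ensures $\max_B b(\cdot, t)$ is attained at each time, and since $\phi > 0$ wherever the ansatz \eqref{singly_warped_form} gives a smooth metric, all coefficients in \eqref{eq:54} are smooth, so the standard maximum principle framework applies cleanly.

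The key is a sign check of the right-hand side of \eqref{eq:54} at a spatial maximum of $b$. At such a point, $\nabla b = 0$, which kills the drift term $-2 g(\nabla b, \nabla \log \phi)$; $\lap_g b \leq 0$; and the Hessian contribution $-2|\nabla\nabla\phi|_{g_B}^2$ is automatically non-positive. The reaction term $2 b \phi^{-2}(\mu - (n_F-1)b)$ is non-positive precisely when $(n_F-1)b \geq \mu$. Since $C \geq \mu$ by hypothesis, as soon as $b$ reaches the threshold $C/(n_F-1)$ we have $(n_F-1)b = C \geq \mu$, so the reaction is also $\leq 0$. In summary, $\partial_t b \leq 0$ at any spatial maximum at which $b \geq C/(n_F-1)$.

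From here I would invoke the standard perturbation version of the parabolic maximum principle: consider $b - \epsilon t$ for $\epsilon > 0$, observe that it cannot first exceed $C/(n_F-1)$ at any time $t_0 > 0$, because at such a first crossing the sign analysis above forces $\partial_t b \leq 0$ while the subtraction of $\epsilon t$ gives $\partial_t(b - \epsilon t) < 0$, contradicting $t_0$ being the first crossing. Letting $\epsilon \searrow 0$ gives the preservation of $b \leq C/(n_F-1)$.

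The argument presents no real obstacle — the entire content is the favorable sign of each term on the right-hand side of \eqref{eq:54} once $b \geq \mu/(n_F-1)$. It is worth noting that the bound is sharp at the Einstein level $C = \mu$, where the reaction term vanishes identically, and no tighter constant can be propagated by this method.
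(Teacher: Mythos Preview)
Your argument is correct and is exactly what the paper does: the paper simply states that applying the maximum principle to the evolution equation \eqref{eq:54} yields the lemma, and you have filled in precisely those details (sign-checking each term at a spatial maximum and the standard $\epsilon$-perturbation). There is no meaningful difference in approach.
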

Taking $C = \mu$ above says that the sectional curvature $L = \phi^{-2}(\frac{\mu}{n_F-1} - |\nabla \phi|^2)$ has non-negativity preserved.

In our doubly-warped product situation, the metric on the base is not compact but we have the boundary conditions \eqref{derivative_bcs}.  Therefore, remembering that we take $\mu = n_F - 1$ we have the following.
\begin{lemma}\label{doubly_gradbnd}
  Ricci flow on a doubly warped product of the form \eqref{dwp_form} with boundary conditions \eqref{warpingfn_bcs}, \eqref{derivative_bcs} preserves the bounds
  \begin{align}
    \label{eq:95}
    |\partial_s \psi|^2 \leq C, \quad |\partial_s \phi|^2 \leq C
  \end{align}
  for any $C \geq 1$.
\end{lemma}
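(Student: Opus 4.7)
My plan is to adapt the argument of Lemma \ref{singly_gradbnd} to the doubly-warped setting by applying it on the completed compact manifold $\bar{M}$, viewing the metric as a singly warped product in two different ways: once with fiber $S^q$ warped by $\phi$, and once with fiber $S^p$ warped by $\psi$. The compactness hypothesis on the base in Lemma \ref{singly_gradbnd} fails in either view (the warped product structure degenerates on the collapsing sphere at the tip), but it can be replaced by running the maximum principle on the compact completion $\bar{M}$ together with a check of the tip behavior.

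Consider $b = |\nabla \phi|^2 = (\partial_s \phi)^2$. On the open set $\{\phi > 0\}$ where the singly-warped structure with fiber $S^q$ is valid, the evolution equations \eqref{gB_evo_single}, \eqref{phi_evo_single} hold with $\mu = q - 1$ and $n_F = q$, so the Bochner calculation giving \eqref{eq:54} yields
\begin{equation*}
\partial_t b = \Delta_g b - 2|\nabla \nabla \phi|^2_{g_B} - 2 g(\nabla b, \nabla \log \phi) + 2(q-1) b \phi^{-2} (1 - b).
\end{equation*}
The boundary conditions \eqref{warpingfn_bcs}, \eqref{derivative_bcs}, together with the vanishing of even derivatives of $\phi$ at the tip, give $\phi = s + O(s^3)$ and hence $b = 1 + O(s^2)$, $\nabla b = O(s)$. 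Thus $b$ extends smoothly to $\bar{M}$, and the apparent singularities $\phi^{-2}$ and $\nabla \log \phi$ in the reaction and drift terms are cancelled by the corresponding zeros of $(1 - b)$ and $\nabla b$, so the PDE extends continuously to all of $\bar{M}$.

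Now apply the parabolic maximum principle on the compact manifold $\bar{M}$. Suppose $b \leq C$ initially with $C \geq 1$, and that at a first time $t_0$ a point $p_0$ realizes $b(p_0, t_0) = C$. If $\phi(p_0) > 0$, then at $(p_0, t_0)$ one has $\nabla b = 0$, $\Delta_g b \leq 0$, $|\nabla \nabla \phi|^2 \geq 0$, and the reaction equals $2(q-1) C \phi^{-2}(1 - C) \leq 0$ since $C \geq 1$, so $\partial_t b \leq 0$ — a contradiction. If $p_0$ is the tip, then $b(p_0) = 1 \leq C$, so the bound is not violated. The same argument applied to $b = |\nabla \psi|^2$, viewing $g$ as a singly-warped product with fiber $S^p$ warped by $\psi$, gives the bound on $(\partial_s \psi)^2$: here $\psi > 0$ everywhere on $\bar{M}$, so no degeneration occurs, and at the tip $\partial_s \psi = 0$ gives $b = 0$. (For $p = 1$ the reaction term vanishes identically and the bound is preserved automatically; for $p = 0$ the $\psi$ factor is geometrically irrelevant.) The main subtlety is extending the evolution equation continuously across the tip, which is handled above by the boundary-condition-induced smoothness of $b$ and the cancellation of the apparent singularities in the PDE.
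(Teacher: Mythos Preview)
Your proof is correct and follows exactly the approach the paper indicates: the paper simply remarks that the compactness hypothesis in Lemma \ref{singly_gradbnd} is replaced by the boundary conditions \eqref{warpingfn_bcs}, \eqref{derivative_bcs}, and you have filled in precisely those details---verifying that $b$ extends smoothly across the tip with value $1$ (for $\phi$) or $0$ (for $\psi$), so that the maximum principle still applies. One small omission worth noting is that the paper allows the interval length $L$ to be infinite, so $\bar M$ need not be compact; in that case the maximum principle requires the usual bounded-geometry caveat (which is available under the curvature bounds assumed elsewhere in the paper), but the paper itself does not dwell on this point either.
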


In the case $\mu \leq 0$ the evolution equation for $b$ \eqref{eq:54} is quite strong, because the reaction terms are nonpositive.  Lott and Sesum \cite{lottSesum} exploit this and more to get a very complete characterization of Ricci flow for warped products of $S^1$ over surfaces.
Here is another consequence of this equation when $\mu \leq 0$, which we use in Section \ref{formal_outer} to identify a steady soliton.
\begin{lemma}\label{eternal_constant}
  If an ancient solution to Ricci flow is of the form \eqref{singly_warped_form} with $\mu \leq 0$, and $|\nabla \log \phi|^2$ is asymptotically zero at infinity, then $\phi$ is in fact constant.  
\end{lemma}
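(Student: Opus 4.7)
The plan is to apply a maximum principle argument to $b = |\nabla \phi|^2$, whose evolution \eqref{eq:54} contains the reaction term $2b\phi^{-2}(\mu - (n_F - 1)b)$. Because $\mu \leq 0$ and $b \geq 0$, this reaction term is bounded above by $-2(n_F - 1)\phi^{-2}b^2$, so after dropping the non-positive $-2|\nabla\nabla \phi|^2$,
\begin{equation*}
  \partial_t b \leq \Delta b - 2\langle \nabla b, \nabla \log \phi \rangle - 2(n_F - 1)\phi^{-2}b^2.
\end{equation*}
The key point is that this is a strictly quadratic sink for $b$, with no positive-sign reaction to balance it.

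The hypothesis $|\nabla \log \phi|^2 \to 0$ at infinity enters to make the spatial supremum $B(t) := \sup_x b(x,t)$ attained on each time slice. Writing $b = \phi^2 |\nabla \log \phi|^2$ and using a uniform upper bound on $\phi$ (which in the context of Section \ref{formal_outer} is inherited from the ambient asymptotic profile of the soliton candidate), we get $b \to 0$ at infinity, so $B(t)$ is achieved at some interior point $x_t$. At $x_t$, $\nabla b = 0$ and $\Delta b \leq 0$, which gives in the barrier/viscosity sense
\begin{equation*}
  \frac{d}{dt} B(t) \leq -2(n_F - 1)\phi(x_t, t)^{-2} B(t)^2 \leq -\alpha B(t)^2,
\end{equation*}
where $\alpha > 0$ is determined by the uniform upper bound on $\phi$.

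The ODE inequality $B' \leq -\alpha B^2$ is incompatible with ancient existence unless $B \equiv 0$. Indeed, if $B(t_0) > 0$ at some $t_0 \leq 0$, then $(1/B)' \geq \alpha$, so $1/B(t) \leq 1/B(t_0) - \alpha(t_0 - t)$ vanishes at the finite time $t^* = t_0 - 1/(\alpha B(t_0)) > -\infty$. This forces $B(t) \nearrow +\infty$ as $t \searrow t^*$, contradicting smoothness of the ancient flow on $(-\infty, 0]$. Therefore $B \equiv 0$, i.e.\ $|\nabla \phi| \equiv 0$, and $\phi$ is constant.

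The main technical obstacle is producing the uniform upper bound on $\phi(x_t, t)$, equivalently handling the maximum principle on a possibly non-compact base. With this bound in hand, the remaining steps are a routine application of Hamilton's ODE trick. In the ancient solutions used in Section \ref{formal_outer}, $\phi$ comes with such a bound from the outer-region asymptotics, so $\alpha$ may be chosen independent of time and the ODE contradiction closes cleanly; if one wants the statement in the full generality written, one can replace pointwise attainment by a standard cutoff/Omori–Yau argument before running the same ODE estimate.
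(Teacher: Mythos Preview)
Your overall strategy---derive a differential inequality with a quadratic sink and run the ancient ODE trick---is exactly right, and is what the paper does. But you chose the wrong quantity, and this creates the gap you yourself flag at the end.

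You work with $b = |\nabla\phi|^2$, whose evolution carries the sink term $-2(n_F-1)\phi^{-2}b^2$. The coefficient $\phi^{-2}$ forces you to assume a uniform upper bound on $\phi$ to get $B' \leq -\alpha B^2$ with $\alpha>0$ fixed. That bound is not in the hypotheses, and your suggested Omori--Yau patch only addresses attainment of the supremum, not the possible degeneracy of $\phi^{-2}$. Likewise, to conclude $b\to 0$ at spatial infinity from the stated hypothesis $|\nabla\log\phi|^2\to 0$ you again need $\phi$ bounded. So as written the argument does not prove the lemma in the generality stated.

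The paper sidesteps this entirely by applying the same idea to $\tilde b = |\nabla\log\phi|^2$ instead. A short Bochner computation gives
\[
(\partial_t - \Delta_g)\tilde b = -2|\nabla\nabla\log\phi|^2 + 4\mu\phi^{-2}\tilde b - 2n\tilde b^2 \leq -2n\tilde b^2,
\]
where now the quadratic coefficient is an absolute constant with no $\phi$ dependence, and the decay hypothesis is stated directly for $\tilde b$. The ancient ODE argument then closes with no auxiliary assumptions. Switching your quantity from $b$ to $\tilde b$ repairs the proof.
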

\begin{proof}
  This instead uses the evolution of $\tilde b = |\nabla \log \phi|^2$:
  \begin{align}
    (\partial_t - \lap_g) \tilde b
    &= - 2 |\nabla \nabla \log \phi|^2 + 4 \mu \phi^{-2} \tilde b - 2 n \tilde b^2 \\
    &\leq - 2n \tilde b^2
  \end{align}
  By applying the maximum principle at arbitrarily negative times, we show that $\tilde b$ is arbitrarily small.
\end{proof}

\subsection{Choosing coordinates}
In the evolution \eqref{evo_psi}, \eqref{evo_phi}, \eqref{evo_sprime} we see quite explicitly the degenerate parabolic nature of Ricci flow, coming from diffeomorphism invariance of Ricci flow.  We are looking at the evolution of a system $(s', \phi, \psi)$ but the second derivative of $s'$ makes no appearance.  Diffeomorphisms of the manifold which respect the doubly warped product structure are given by diffeomorphisms
\begin{align}
  (0, 1) \times S^p &\times S^q \to (\alpha_1,\alpha_2) \times S^p \times S^q, \\
  (x, a, b) &\mapsto (f(x), a, b).
\end{align}

We can use diffeomorphism invariance to reduce the number of functions in our system, by choosing a representative of the diffeomorphism class. This is a fancy way of saying that we want to choose a more geometric coordinate for the interval $(0, 1)$.  A natural choice is to choose a point $x_0$ and let
\begin{align}
  \label{eq:12}
  s(x) = \int_{x_0}^x s'(\tilde x) d\tilde x
\end{align}
which represents the signed distance, with respect to $g$, from $x=x_0$.  A disappointing aspect of this coordinate is that
\begin{align}
  \label{eq:22}
  \partial_t \fixx s(x)
  = \int_0^x \partial_t \fixx s'd\tilde x
  = \int_0^{s(x)} \partial_t \fixx \log( s') d \tilde s,
\end{align}
so the evolutions $\partial_t \fixs \psi$ and $\partial_t \fixs \phi$ are non-local.

In Figure \ref{figure:phipsi} we illustrate the flow in the $s$ coordinate.

\begin{figure}
  \centering
  \centerline{\includegraphics[scale=.4]{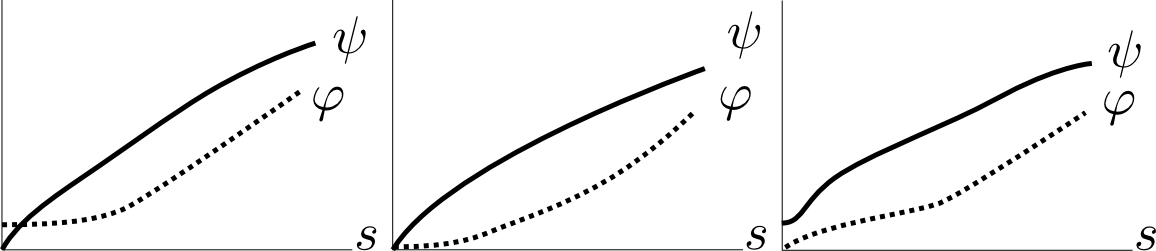}}
  \caption{
    An illustration $\phi$ and $\psi$ as functions of $s$, before, at, and after the singular time.  We don't use exact numerics but the boundary conditions are accurate.
  }
  \label{figure:phipsi}
\end{figure}

\subsection{The $r$ coordinate}\label{r_coordinate_section}
In a region where $\phi$ is increasing, we may use it as a coordinate.  We set $r = \phi$ to emphasize when it is being used as a coordinate.  Using $r$, the metric can be written
\begin{align*}
  g = \frac{1}{v}dr^2 + h g_{S^p} + r^2 g_{S^q}
\end{align*}
where $v = (\phi_s)^2$ and $h = \psi^2$.  
The evolution in these coordinates is given by,
\begin{align}
  \partial_t v
  &= \fopv(v,h) \label{fopv_def}\\
  &= v v_{rr} - \oh v_r^2 + r^{-1}(q - 1 - v)v_r \\
  &+ 2 (q-1) r^{-2}v(1-v)  - 2  p\frac{v}{h} \left( \frac{vh_r^2}{4h^2} \right) \\
  \nonumber\\
  \partial_t h
  &= \foph(v,h) \label{foph_def}\\
  &= v h_{rr} + r^{-1}(q-1+v)h_r \\
  &- 4 \left( \frac{vh_r^2}{4h} \right) - \alpha_p^2 \nonumber\\
\end{align}
Of course in these coordinates the gradient bound for $\phi$ from Lemma \ref{doubly_gradbnd} is just
\begin{align}
  v \leq 1. \label{v_gradbnd}
\end{align}
Note that
\begin{align}
  \label{eq:96}
  u \defeq \frac{vh_r^2}{4h} = (\psi_s)^2,
\end{align}
so the gradient bound for $\psi$ from Lemma \ref{doubly_gradbnd} takes the form
\begin{align}
  \label{h_gradbnd}
  \frac{v h_r^2}{4h} \leq 1.
\end{align}
This bound will be sufficient for dealing with that term.  Let us define,
\begin{align}
  \gopv(v, h, u)
  &= v v_{rr} - \oh v_r^2 + r^{-1}(q-1-v)v_r \label{gopv_def}\\
  &+ \alpha_q^2 r^{-2}v(1-v) - 2 pu \frac{v}{h}, \\
  \goph(v, h, u)
  &= v h_{rr} + r^{-1}(q-1+v)h_r \label{goph_def}\\
  &- 4 u - \alpha_p^2 .
\end{align}
Then we may write
\begin{align}
  \label{evo_gops}
  \partial_t v = \gopv\left( v, h, \frac{v h_r^2}{4h} \right), \quad \partial_t h = \goph\left( v, h, \frac{v h_r^2}{4h} \right).
\end{align}

\begin{figure}[tp]
  \centering
  \centerline{\includegraphics[scale=.4]{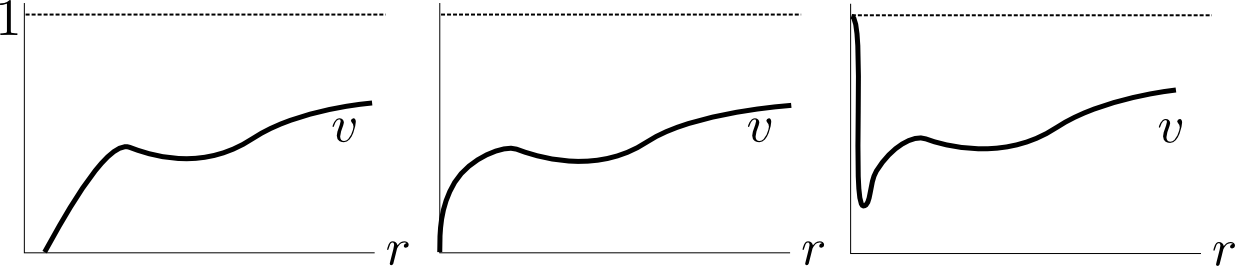}}
  \caption{
    An illustration of the function $v$ in terms of $r$ before, at, and after the singular time.  We do not use exact numerics but do pay attention to asymptotics at $r=0$.
  }
\end{figure}

\section{Existence}\label{proof_main}
In this section we prove the existence theorem, Theorem \ref{main_theorem}, except for the construction of barriers which is done in Section \ref{barriers_section}.  Sections \ref{max_princ_section} and \ref{regularity_section} prove things about the evolution near the singular point.  These Sections are purely on the level of looking at a parabolic PDE on an interval.  In Sections \ref{mollified_defns}, \ref{uniform_crossing_time}, and \ref{convergence} we construct a forward evolution as a limit of forward evolutions from smooth initial metrics.  A small amount of work is need to glue the results about regularity near the singular point to regularity results for Ricci flow in the smooth region.

\subsection{Avoidance Principle and Barriers}\label{max_princ_section}
We use the avoidance principle to trap solutions ($v, h$) to \eqref{fopv_def}, \eqref{foph_def} in a simple ``square'':
\begin{align}
  \label{eq:9}
  \{(v, h):\;
  &\text{ for all } (r,t) \in [r_1, r_2] \times [t_1, t_2] \} \\
  &(v(r,t), h(r,t)) \in [v^-(r, t), v^+(r,t)] \times [h^-(r,t), h^+(r,t)] 
\end{align}
In general, the term in the evolution of $v$ which involves derivatives of $h$,
\begin{align}
  \label{eq:31}
  -2p \frac{v}{h} \psi_s^2 = - \oh v^2\frac{h_r^2}{h}  
\end{align}
could cause difficulty in using an avoidance principle; if $(v,h)$ is on the $v = v^+$ boundary of such a set we cannot have any knowledge about $h_r$.  In our situation, we are saved by the fact that we have the preserved gradient bound \eqref{h_gradbnd}.  Luckily, this term never dominates our calculations, so we never need fine control on it and the following ad hoc definition of barriers suffices.
\begin{definition}[Barriers]\label{barrier_def}
  Functions $v^{+}$, $v^{-}$, $h^{+}$, $h^{-}$ which are $C^2$ on $[r_1, r_2] \times [t_1, t_2]$ are called barriers if
  \begin{itemize}
  \item \textbf{The barriers are properly ordered:}
    \begin{align}
      \label{eq:30}
      v^- < v^+, \quad h^- < h^+.
    \end{align}
  \item \textbf{$v^{\pm}$ are sub/supersolutions:}
    For all $h^*$ with $h^{-} \leq h^*$ and for all $u^*$ with $0 \leq u^* \leq 1$ we have
    \begin{align}
      \partial_t v^- - \gopv(v^-, h^*, u^*) \leq 0 \leq \partial_t v^+ - \gopv(v^{+}, h, u^*). \label{subsuper_v_ineq}
    \end{align}
  \item \textbf{$h^{\pm}$ are sub/supersolutions:}
    For all $v^*$ with $v^{-} \leq v^* \leq v^{+}$ and for all $u^*$ with $0 \leq u^* \leq 1$ we have
    \begin{align}
      \partial_t h^- - \goph(v^*, h^-, u^*) \leq 0 \leq \partial_t h^+ - \goph(v^*, h^+, u^*). \label{subsuper_h_ineq}
    \end{align}
  \item \textbf{Smoothness conditions at zero:} 
    If $r_1 = 0$ then
    \begin{align}
      \label{eq:75}
      v^-(0, t) = v^+ (0, t) = 1,\quad \partial_r v^- (0, t) = \partial_r v^+(0,t) = 0, \\
      0 < h^-(0, t) < h^+(0, t), \quad \partial_r h^-(0, t) = \partial_r h^+(0, t) = 0.
    \end{align}
  \end{itemize}
\end{definition}

Our equations will have Neumann boundary conditions at $r=0$, and we may think of $r=0$ as an interior point.  Therefore we use the following definition of parabolic boundary.
\begin{definition}
  \label{eq:51}
  The parabolic boundary of  $[r_1, r_2] \times [t_1, t_2]$ is defined to be
  \begin{align}
    \label{eq:76}
    \left( [r_1, r_2] \times \{t_1\} \right)
    \cup
    \left( \{r_1\}\times [t_1, t_2] \right)
    \cup
    \left( \{r_2\}\times[t_1, t_2]  \right)
  \end{align}
  if $r_1 > 0$ or
  \begin{align}
    \label{eq:77}
    \left( [0, r_2] \times \{t_1\} \right)
    \cup
    \left( \{r_2\} \times [t_1, t_2] \right)
  \end{align}
  if $r_1 = 0$.
\end{definition}

For our barriers, we have the following avoidance principle.  We state it with a slight contrapositive compared to the usual statement; we say ``if a solution crosses the barriers, it crosses on the parabolic boundary first''.
\begin{lemma}\label{avoidance_princ}
  Suppose that $v^{+}$, $v^{-}$, $h^{+}$, $h^{-}$ are barriers on $[r_1, r_2] \times [t_1, t_2]$, $v$ and $h$ evolve by \eqref{evo_gops} and $h$ satisfies the gradient bound \eqref{h_gradbnd}.  Define $L^- = r^{-2}(1-v^-)$, $L = r^{-2}(1-v)$, $L^+ = r^{-2}(1-v^+)$.  If $r_1 = 0$ then suppose these functions are defined at $r=0$ by their limit as $r \searrow 0$ and that $L^-, L, L^+, h^-, h, h^+$ satisfy the Neumann condition $\partial_r (\cdot) = 0$ at $r=0$.

  Let $t_0$ be the infimum of the times for which one of the inequalities
  \begin{align}
    -L^{-} < -L < -L^+, \quad h^- < h < h^+, \label{subsuperbnd3}.
  \end{align}
  is violated.  Then there is an $r_0$ so that one of the inequalities \eqref{subsuperbnd3} is violated at $(r_0, t_0)$ and $(r_0, t_0)$ is on the parabolic boundary of $[r_1, t_1]$.
\end{lemma}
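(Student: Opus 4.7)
The plan is to run the parabolic maximum principle on the four scalar inequalities in \eqref{subsuperbnd3}, after slightly perturbing the barriers so that the sub/supersolution conditions of Definition \ref{barrier_def} become strict. Two features of the setup require care: the cross-coupling between the $v$- and $h$-equations, which is absorbed by the quantifications over $v^{*}, h^{*}, u^{*}$ in \eqref{subsuper_v_ineq}, \eqref{subsuper_h_ineq} together with the gradient bound \eqref{h_gradbnd}; and the Neumann behavior at $r=0$ when $r_1=0$, for which we work with $L = r^{-2}(1-v)$ in place of $v$ and invoke the hypothesis that $L^\pm$ extend smoothly with $\partial_r L^\pm|_{r=0}=0$.

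For $\epsilon>0$ small and $K$ large, put $\tilde L^\pm = L^\pm \mp \epsilon e^{Kt}$ (equivalently $\tilde v^\pm = v^\pm \pm \epsilon r^2 e^{Kt}$, so that the smoothness conditions in Definition \ref{barrier_def} are preserved) and $\tilde h^\pm = h^\pm \pm \epsilon e^{Kt}$. Because the barriers are $C^2$ on the compact region $[r_1,r_2]\times[t_1,t_2]$ and $\gopv, \goph$ are locally Lipschitz in their arguments on the bounded range imposed by the ordering \eqref{eq:30} and the gradient bound, one may take $K$ depending only on the $C^2$-norms of the barriers so that \eqref{subsuper_v_ineq}, \eqref{subsuper_h_ineq} hold strictly for the perturbed functions once $\epsilon$ is small. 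It therefore suffices to prove the conclusion for the perturbed strict barriers and let $\epsilon\searrow 0$. Arguing by contradiction, suppose $(r_0,t_0)$ is a first point at which one of the strict inequalities $\tilde L^+ < L < \tilde L^-$, $\tilde h^- < h < \tilde h^+$ fails, and that $(r_0,t_0)$ is not on the parabolic boundary. Then $t_0 > t_1$ and either $r_0 \in (r_1,r_2)$ or $r_1 = 0 = r_0$.

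I illustrate the touching-point argument for the case $h(r_0,t_0) = \tilde h^+(r_0,t_0)$; the three other cases are analogous (using the $L$-equation in place of the $v$-equation, so that the analysis is carried out at a regular, uniformly parabolic point). At $(r_0,t_0)$ the function $h - \tilde h^+$ attains a nonnegative maximum on $[r_1,r_2]\times[t_1,t_0]$, so $\partial_t(h - \tilde h^+) \geq 0$, $\partial_r(h - \tilde h^+) = 0$ (by the interior first-derivative test, or by the Neumann condition if $r_0 = 0$), and $\partial_r^2(h - \tilde h^+) \leq 0$. Choose $v^* = v(r_0,t_0)$, which lies in $[\tilde v^-, \tilde v^+]$ since $(r_0,t_0)$ is a \emph{first} crossing, and $u^* = vh_r^2/(4h)|_{(r_0,t_0)} \in [0,1]$ by \eqref{h_gradbnd}. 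The strict supersolution inequality for $\tilde h^+$ and the evolution of $h$ give
\begin{align}
  \partial_t \tilde h^+ > \goph(v^*, \tilde h^+, u^*), \qquad \partial_t h = \goph(v^*, h, u^*).
\end{align}
Subtracting and using that $\goph(v^*, \cdot, u^*)$ is affine in $h_{rr}$ with coefficient $v^* \geq 0$, and that $h$ and $\tilde h^+$ share value and first $r$-derivative at the touching point, we obtain $\partial_t(h - \tilde h^+) < v^*\,\partial_r^2(h - \tilde h^+) \leq 0$, contradicting $\partial_t(h - \tilde h^+) \geq 0$.

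The main work I anticipate is in the $v^\pm$ cases at $r_0 = 0$: one rewrites \eqref{fopv_def} as a PDE for $L$, in which the singular reaction term $2(q-1) r^{-2} v(1-v) = 2(q-1) v L$ becomes regular and the leading diffusion coefficient is $v$, equal to $1$ at $r=0$ by Definition \ref{barrier_def}. The coupling term $-2puv/h$ in $\gopv$ is absorbed by choosing $h^* = h(r_0,t_0)$ and $u^* = vh_r^2/(4h)|_{(r_0,t_0)}$ in \eqref{subsuper_v_ineq}. Once the $L$-equation is seen to be uniformly parabolic near $r=0$, the touching-point computation above transfers verbatim. The most delicate bookkeeping is carrying out this change of variable cleanly and matching the perturbation to the Neumann extension; with that in hand the maximum principle applies transparently.
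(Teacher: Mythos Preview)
Your approach is correct and is essentially the same as the paper's, which simply calls the case $r_1>0$ ``a standard basic comparison principle argument'' and notes that for $r_1=0$ the Neumann condition makes both the $L$- and $h$-equations amenable to comparison at $r=0$. You have supplied the standard details (perturbation to strict barriers, first-touching argument, passage to $L$ to regularize at the origin) that the paper omits.
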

\begin{proof}
  The case $r_1 > 0$ is a standard basic comparison principle argument. 

  To deal with the case $r_1 = 0$, note that using the Neumann condition, the equation for both $L$ and $h$ allows for comparison at $r= 0$.  (Essentially this is because $r = 0$ is secretly an interior point of the symmetric manifold.)
\end{proof}

We will apply the above lemma where $r_1$ and $r_2$ depend on $t$, or $r_1 \equiv 0$.  The proof is the same.  We will also need to glue barriers which are defined in different space-time regions.
\begin{definition}[Gluing Conditions]
  Suppose $(\hat v^-, \hat v^+, \hat h^-, \hat h^+)$ are barriers are $[\hat r_1, \hat r_2] \times [t_1, t_2]$ and $(\tilde v^-, \tilde v^+, \tilde h^-, \tilde h^+)$ are barriers on $[\tilde r_1, \tilde r_2] \times [t_1, t_2]$.  We say these satisfy the gluing conditions if     $\hat r_1 < \tilde r_1 < \hat r_2 < \tilde r_2$ and 
  \begin{align}
    \label{eq:2}
    \hat v^-(\tilde r_1, t) > \tilde v^-(\tilde r_1, t), \quad \tilde v^-(r_2, t) > \hat v^-(r_2, t), \\
    \hat v^+(\tilde r_1, t) < \tilde v^+(\tilde r_1, t), \quad \tilde v^+(r_2, t) < \hat v^+(r_2, t). 
  \end{align}
  And similarly for $\hat h^-, \hat h^+, \tilde h^-, \tilde h^+$.
\end{definition}
If two sets of barriers satisfy the gluing conditions, then we can conclude that a solution is trapped between both on $[\hat r_1, \tilde r_2] \times [t_1, t_2]$ if we only check the boundary conditions on the parabolic boundary of $[\hat r_1, \tilde r_2] \times [t_1, t_2]$.  One way to see this is to go to the proof of Lemma \ref{avoidance_princ} and check that a solution cannot touch $\max(\hat v^-, \tilde v^-)$ from above at a point where it transitions from $v^-$ to $\tilde v^-$.  Perhaps a cleaner way is to just use Lemma \ref{avoidance_princ} twice.  Applied to the hat barriers it says a solution has to have its first crossing of the hat barriers on the parabolic boundary of $[\hat r_1, \hat r_2] \times [t_1, t_2]$.  But if it crosses at $r = \hat r_2$ first, then by the gluing conditions it has crossed the tilde barriers at $\hat r_2 \in [\tilde r_1, \tilde r_2]$ before $\tilde r_1$, so by Lemma \ref{avoidance_princ} it must have crossed the tilde barriers at $\tilde r_2$ first.

\begin{table}
  \centering
  \def\arraystretch{1.5}
  \begin{tabular}{c | c | c}
    Region & Barrier for $v$ & Barrier for $h$\\
    \hline
    Outer  & $a^2_\pm(1 + (1 \pm \epsilon)\alpha_q^2 t r^{-2} )v_0$
                             & $b^2_{\pm}(1 + (1 \pm \epsilon)\alpha_q^2 t r^{-2} )h_0$ \\
    \hline
    Parabolic & $2 \hat a^2_{\pm} (1 + \alpha_q^2 (1 \pm D |\tau|^{-1}\rho^{-2})\rho^{-2} )$
                             & $\oh \hat b^2_{\pm} (\rho^2 + \alpha_q^2 (1 \pm D |\tau|^{-1}\rho^{-2}))$ \\
    \hline
    Tip & $\bry(c_{\pm} \zeta) + (1 \mp \epsilon) |\tau|^{-1}c^{-2}_\pm \cry(c_{\pm} \zeta) $
                             & $ \alpha_q^4 \bar c_{\pm}^2 + |\tau|^{-1} \ary(c_{\pm}\zeta)$
  \end{tabular}
  \caption{The barriers used in each region described in Lemma \ref{barriers_full}}
  \label{barrier_table}
\end{table}

The construction of barriers is conceptually simple, but it introduces a number of unsightly constants and has been relegated to Section \ref{barriers_section}.  The reader who wishes to know where the asymptotics of the forward evolution come from may read the formal asymptotics in Section \ref{formal_section}.  The result of the lemmas of Section \ref{barriers_section} is the following.
\begin{lemma}\label{barriers_full}
  Let $\delta, \epsilon > 0$ be given and uniformly small.  Then there is $r_*>0 $ and $t_* > 0$ so that $[0, r_*] \times (0, T_*]$, is covered by three sets of barriers.
  Specifically, there are constants $C_1, C_2$ independent of $\epsilon, \delta$ so that
  \begin{itemize}
  \item Let $\rho = r/\sqrt{t}$.  In the outer region given by,
    $$
    \{\rho \geq C_1/\sqrt{\epsilon} \text{ and } r < r_*\}
    \eqdef
    \{(r, t): \sqrt{\frac{t}{\epsilon}}C_1 < r < r_* \},$$
    the barriers are the outer region barriers in Table \ref{barrier_table}, detailed in Lemma \ref{outer_barriers}.
  \item Let $\zeta = |\log t| \rho$.  In the parabolic region given by,
    $$
    \{\rho < 2 C_1/\sqrt{\epsilon} \text{ and } \zeta > C_2/\sqrt{\epsilon} \}
    \eqdef
    \left\{ (r, t) : \sqrt{\frac{t}{\epsilon|\log t|}}C_2 < r < \sqrt{\frac{t}{\epsilon}}2C_1  \right\},
    $$
    the barriers are the parabolic region barriers in Table \ref{barrier_table}, detailed in Lemma \ref{para_barriers}, with constants
    \begin{align}
      \label{eq:56}
      \hat a^2_{\pm} = k^2 + o\left(|\epsilon| + |\delta|\right), \quad \hat b^2_{\pm} = k^{-2} + o\left(|\epsilon| + |\delta|\right)
    \end{align}
  \item In the tip region given by,
    $$
    \{\zeta < 2C_2/\sqrt{\epsilon} \}
    \eqdef
    \left\{ (r, t) : r < \sqrt{\frac{t}{\epsilon |\log t|}}2C_2 \right\}
    $$
    the barriers are the tip region barriers in Table \ref{barrier_table}, detailed in Lemma \ref{tip_barriers}, with constants
    \begin{align}
      \label{eq:63}
      \bar c_\pm = c_0 + o(|\epsilon| + |\delta|) , \quad c_\pm = c_0 + o(|\epsilon| + |\delta|) 
    \end{align}
  \item The gluing conditions are satisfied between the outer and parabolic region barriers, and between the parabolic and tip region barriers.
  \end{itemize}
\end{lemma}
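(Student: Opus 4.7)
The plan is to prove Lemma \ref{barriers_full} as an assembly statement: each of the three sets of barriers is produced by a dedicated sub-lemma (respectively Lemma \ref{outer_barriers}, Lemma \ref{para_barriers}, Lemma \ref{tip_barriers}), and the real content here is checking that the domains cover $[0,r_*]\times(0,T_*]$ and that the three sets of barriers satisfy the gluing conditions at their interfaces. So I would first verify the cover, then cite each sub-lemma to install the barriers in its region, and then test the gluing by comparing the explicit asymptotic forms of $v^\pm$ and $h^\pm$ across the overlap.

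For the cover, I would pick the region boundaries as dictated by the scaling variables: the outer region uses $\rho = r/\sqrt{t}$, the parabolic region uses both $\rho$ and $\zeta = \sqrt{|\log t|/t}\,r$, and the tip region uses $\zeta$. Given the thresholds $\rho = C_1/\sqrt\epsilon$ and $\zeta = C_2/\sqrt\epsilon$ (with overlap regions $\{C_1/\sqrt\epsilon<\rho<2C_1/\sqrt\epsilon\}$ and $\{C_2/\sqrt\epsilon<\zeta<2C_2/\sqrt\epsilon\}$), for $t$ sufficiently small the three regions clearly cover $[0,r_*]\times(0,T_*]$, since in each the relevant non-dimensional variable takes values in an unbounded interval that meets the next region. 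The constants $C_1,C_2$ are then chosen as in Lemmas \ref{outer_barriers}--\ref{tip_barriers} to make the sub/supersolution inequalities hold inside each region with margin; the margins of size $\epsilon$ and $\delta$ are what buy the strict inequalities needed for gluing.

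For the gluing, on the outer--parabolic interface the point is that both barriers for $v$ evaluate at $r \sim \sqrt{t/\epsilon}$ to $2\hat a_\pm^2(1+o(1))$ on the parabolic side, while on the outer side they give $a_\pm^2 v_0(1+(1\pm\epsilon)\alpha_q^2 t r^{-2})v_0$. Using the assumed asymptotics \eqref{assumed_asymptotics} the initial value $v_0 = (\partial_s\phi_{init})^2$ is asymptotically $k^2/|\log r|$, which at $r=\sqrt{t/\epsilon}$ is asymptotically $2k^2/|\log t|$; matching this with $\hat a_\pm^2 = k^2 + o(\epsilon+\delta)$ gives that the parabolic $v^-$ sits strictly above the outer $v^-$ and strictly below the outer $v^+$, at least for $\epsilon,\delta$ small. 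The analogous computation for $h$ uses \eqref{eq:18}. At the parabolic--tip interface one compares the Bryant-based tip barrier $\bry(c_\pm\zeta) + (1\mp\epsilon)|\tau|^{-1}c_\pm^{-2}\cry(c_\pm\zeta)$ against the parabolic expression $2\hat a_\pm^2(1+\alpha_q^2(1\pm D|\tau|^{-1}\rho^{-2})\rho^{-2})$. At large $\zeta$ the Bryant soliton is asymptotically cylindrical, so $\bry(\zeta)\sim(q-1)/\zeta^2$ up to constants, and with the scalings $\hat a_\pm^2\approx k^2$ and $c_\pm\approx c_0$ the two families agree to leading order; the $\pm\epsilon$ and $\pm\delta$ slack in the constants then give strict ordering in the correct direction.

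The main obstacle is the inner sub-lemmas, not this assembly. The hardest of these is Lemma \ref{tip_barriers}: one must verify that the Bryant soliton, which is only given implicitly through \eqref{eq:88}, remains a sub/supersolution when perturbed by $|\tau|^{-1}c^{-2}\cry$ under the operator $\gopv$, and the cross-coupling term $-2pu\,v/h$ has to be absorbed using $0\le u\le 1$ together with the lower bound on $h$ coming from the choice $\alpha_q^4\bar c_\pm^2$ in the tip $h$-barriers. The cross-coupling is precisely where the gradient bound \eqref{h_gradbnd} is indispensable: it is what lets Definition \ref{barrier_def} quantify the $u$-variable over $[0,1]$ rather than requiring pointwise control of $h_r$, which would be impossible when $(v,h)$ touches the boundary of the square. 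Once each sub-lemma is in hand, the present lemma reduces to the comparison calculation sketched above; the slack built into $\hat a_\pm, \hat b_\pm, c_\pm, \bar c_\pm$ of order $o(\epsilon+\delta)$ is exactly what the gluing conditions consume.
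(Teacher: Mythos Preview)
Your proposal is correct and follows the same architecture as the paper: Lemma~\ref{barriers_full} is proved as the assembly of Lemmas~\ref{outer_barriers}, \ref{para_barriers}, \ref{tip_barriers} together with two separate gluing lemmas (one for the outer--parabolic interface and one for the parabolic--tip interface), which is exactly how Section~\ref{barriers_section} is organized.

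One point deserves sharpening. The gluing condition is not that the parabolic $v^-$ lies \emph{between} the outer $v^-$ and $v^+$; it is that the two lower barriers \emph{cross} in the overlap, e.g.\ $v^-_{para}(\rho_1)>v^-_{out}(\rho_1)$ while $v^-_{para}(\rho_2)<v^-_{out}(\rho_2)$, and the reverse ordering for the upper barriers. The paper implements this by forming the ratio $M(\rho)=\lim_{\tau\to-\infty}\eta^+_{out}/\eta^+_{para}$ (and the analogous ratios for $v$ and for the tip--parabolic interface in the $\zeta$ variable), then choosing the free constants $\hat a_\pm,\hat b_\pm,c_\pm,\bar c_\pm$ so that $M$ passes through~$1$ strictly inside the overlap interval $[\rho_1,2\rho_1]$ (respectively $[\zeta_1,2\zeta_1]$). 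The $o(\epsilon+\delta)$ slack you identified is spent precisely on placing this crossing, not merely on obtaining an ordering at a single point. Once that correction is made, your sketch matches the paper's proof.
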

By design the outer barriers contain the initial data.  Therefore by the first point, any metric with the asymptotics \eqref{assumed_asymptotics} will be contained by the barriers at time zero, in a smaller region $[0, r_*]$.  Also, any functions between the barriers for positive time will satisfy the assumptions for Lemma \ref{regularity_tip} below.  This is because the barriers in the tip region trap $v$ in the correct manner near $r=0$.  We will not (and cannot) directly apply Lemma \ref{avoidance_princ} to the barriers on $[0, T_*]$, but we will apply it on $[t_1, T_*]$.

\subsection{Parabolic Regularity}\label{regularity_section}

In this section we apply regularity results for parabolic partial differential equations to our situation.  We prove two lemmas which will allow us to get regularity for solutions provided $v$ and $h$ stay in bounded regions.

\begin{lemma}\label{regularity_tip}
  Let $v\geq 0$ and $h\geq 0$ satisfy \eqref{fopv_def}, \eqref{foph_def} on $[0, r_2] \times [t_1, t_2)$, with
  \begin{align}
    \label{eq:66}
    \partial_r v(0, t) = \partial_r h (0, t) = 0.
  \end{align}
  Suppose the gradient bounds \eqref{v_gradbnd} and \eqref{h_gradbnd} are satisfied.  Let $f = \frac{1-\sqrt{v}}{r^2} = \frac{1-\phi_s}{\phi^2}$. Suppose
  \begin{align}
    \sup_{[0, r_2] \times [t_1, t_2)} f < C_1, \qquad \sup_{[0, r_2] \times [t_1, t_2)} h < C_1, \label{f_bound}
  \end{align}
  \begin{align}
    \inf_{[0, r_2] \times [t_1, t_2)} v > C_1^{-1}, \qquad \inf_{[0, r_2] \times [t_1, t_2)}  h > C_1^{-1},
  \end{align}
  and let $t_1' > t_1$ and $r_2' < r_2$ be given.

  Then there is $C_2 < \infty$ depending on $C, r_2, r_2', t_1,t_1', t_2, p, q$ such that
  \begin{align}
    \label{eq:6}
    \sup_{[0, r_2'] \times [t_1', t_2)} |\Rm| < C_2
  \end{align}
  where $\Rm$ is the curvature of the associated doubly warped product metric.
\end{lemma}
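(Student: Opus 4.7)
The plan is to view the system \eqref{fopv_def}--\eqref{foph_def} as a quasilinear parabolic system in the unknowns $(v,h)$ whose leading second-order coefficient is $v$ itself, and then apply standard interior parabolic regularity in two regimes: in the strict interior $r>0$, where the $r^{-1}$ and $r^{-2}$ coefficients are harmless, and near the tip $r=0$, where the degeneracy of the coordinate must be handled by symmetry. The five sectional curvatures in \eqref{eq:101}--\eqref{eq:103} can all be expressed in terms of $v$, $h$ and at most two spatial derivatives via $\partial_s = \sqrt{v}\,\partial_r$, so once uniform $C^{2,\alpha}$ control of $(v,h)$ is established on $[0,r_2']\times[t_1',t_2)$ the bound \eqref{eq:6} follows.

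First, on any strict sub-interval $[r_*,r_2]\times[t_1,t_2)$ with $r_*>0$, the hypothesis $v>C_1^{-1}$ together with the gradient bound \eqref{v_gradbnd} make the equation for $v$ uniformly parabolic with bounded coefficients. The gradient bound \eqref{h_gradbnd} combined with $h>C_1^{-1}$ forces $|h_r|\leq 2\sqrt{h/v}$ to be uniformly bounded, so the equation for $h$ is also uniformly parabolic once $v$ is. A standard Krylov--Safonov plus Schauder bootstrap then produces uniform $C^{k,\alpha}$ estimates on $(v,h)$ on slightly shrunken spacetime cylinders for every $k$, so in particular the curvature bound holds away from the tip.

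The main obstacle is the behaviour at the tip $r=0$. There the terms $r^{-1}(q-1-v)v_r$ and $r^{-2}v(1-v)$ appearing in $\gopv$ are only defined in a limiting sense. The Neumann condition $\partial_r v(0,t)=0$ together with the hypothesis $\sup f<C_1$ forces $v(0,t)=1$: indeed $\sqrt{v(r,t)}=1-f(r,t)r^2\to 1$ as $r\searrow 0$. Hence $r^{-2}(1-v)=f(1+\sqrt{v})$ stays bounded, and a Taylor expansion based on $\partial_r v(0,t)=0$ gives $r^{-1}v_r=\partial_r^2 v(0,t)+O(r)$. The conceptually cleanest way to turn these observations into rigorous regularity is to regard $v$ and $h$ as rotationally symmetric functions on the smooth symmetric manifold whose tip is smoothed to $D^{q+1}\times S^p$: extending them evenly in $r$ (equivalently, working in local Cartesian coordinates around the tip in which $r$ is the Euclidean radius), the apparently singular terms become smooth and the system becomes uniformly parabolic in the ambient sense. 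Interior parabolic regularity in those coordinates then yields the same $C^{k,\alpha}$ estimates on a full neighbourhood of the tip.

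Combining the two regimes produces uniform spatial $C^{2,\alpha}$ bounds on $(v,h)$ on $[0,r_2']\times[t_1',t_2)$. The curvatures $L_\phi=f(1+\sqrt{v})$ and $L_\psi$ are then controlled directly from the hypotheses, while $K_\phi=-v_r/(2r)$, $K_\psi$ (a second-order expression in $\sqrt{h}$ involving $v,v_r,h_r,h_{rr}$) and $J=vh_r/(2rh)$ are controlled by the interior regularity; the apparent $r^{-1}$ factors in $K_\phi$ and $J$ are absorbed because the smooth even extension forces $v_r$ and $h_r$ to vanish to first order at the tip. The principal subtlety throughout is that the gauge in which the system is genuinely parabolic is the one inherited from the smooth ambient manifold, not from the one-dimensional singular coordinate $r$.
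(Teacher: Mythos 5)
Your overall strategy (interior parabolic estimates for $r>0$, plus treating $r=0$ as an interior point of a rotationally symmetric problem, plus a Schauder bootstrap) is in the right spirit, but the bootstrap as you describe it does not close at the tip, and the missing device is exactly the one the paper is built around: one must take $f=\frac{1-\sqrt v}{r^2}$ itself as the unknown of a parabolic equation, not $v$. The equation \eqref{fopv_def} for $v$ contains the reaction term $2(q-1)r^{-2}v(1-v)$. The hypothesis \eqref{f_bound} makes this term \emph{bounded} (it equals $2(q-1)v\,f(1+\sqrt v)$), so De Giorgi--Nash--Moser does give $v\in C^\alpha$ near the tip, as you say. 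But the next step of your bootstrap --- Schauder applied to the $v$-equation to get $C^{2,\alpha}$ control of $v$ at $r=0$ --- requires this singular zeroth-order term to be \emph{H\"older continuous}, and H\"older continuity of $v$ (or even $C^{1,\alpha}$ control of $v$) does not imply H\"older continuity of $\frac{1-v}{r^2}$ near $r=0$; that would essentially require the $C^{2,\alpha}$ control you are trying to produce. The even extension to Cartesian coordinates does not remove this obstruction, because a priori $v$ is not known to be a smooth radial function, only a radial function with $f$ bounded. The same circularity appears when you bound the curvature $K_\phi=-v_r/(2r)$: you invoke "interior regularity" to say $v_r$ vanishes to first order at the tip, but that first-order vanishing with a quantitative constant is precisely a $C^2$ bound on $v$ at $r=0$ that your argument has not yet delivered.

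The paper avoids this by computing the evolution equation \eqref{evo_f} satisfied by $f$ directly. In that equation the would-be singular terms reorganize into the radial Laplacian in dimension $q+3$ (the first-order coefficient is $\frac{q+2}{r}$, plus terms with bounded coefficients thanks to \eqref{f_bound} and the gradient bounds), so $f$ --- viewed as a radial function on a ball in $\real^{q+3}$ --- solves a uniformly parabolic equation with $L^\infty$ coefficients. De Giorgi--Nash--Moser then gives $f\in C^\alpha$, hence the coefficients of the $h$-equation are H\"older, Schauder gives $|h_{rr}|$ bounds, and feeding that back makes the coefficients of the $f$-equation H\"older so that Schauder gives $C^2$ control of $f$; all five sectional curvatures in \eqref{eq:101}--\eqref{eq:103} are smooth functions of $h,h_r,h_{rr},f,f_r$ and are then bounded. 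To repair your proof you would need to carry out this change of unknown (or an equivalent one, e.g.\ an equation for $r^{-2}(1-v)$), since without it the H\"older regularity of the singular reaction term is never established.
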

\begin{proof}
  The sectional curvatures are smooth functions of $h$, $h_r$, $h_{rr}$, and $f$, $f_r$, so it suffices to control those.

  The function $f$ satisfies
  \begin{align}
    f_t
    &= \left[ 1-2fr^2 + f^2 r^4 \right] f_{rr}  \\
    &+ \frac{q+2}{r} f_r + \left[3r^3f^2 - 6rf \right]f_r \\
    &+ \left[-r^2 q f + r^2 f + 3q -3 - \frac{p}{2} \frac{1}{h} \frac{vh_r^2}{h} \right] f^2
    \label{evo_f}
  \end{align}
  Think of $f$ as a rotationally symmetric function in $B(0, r_2) \subset \real^{q+3}$.  Then by our assumptions the equation \eqref{evo_f} is a uniformly parabolic equation with coefficients which depend on space and time but have an $L^{\infty}$ bound.  By the De Giorgi-Nash-Moser theorem (Theorem 6.28 in \cite{liebz} works here) $f$ is in the H\"older space $C^\alpha(B(0, r_2') \times [t_1', t_2))$ with $\alpha$ and the $C^\alpha$ norm only depending on the allowed numbers.

  We know already that $h$ is uniformly in $C^{1}$, because we assumed the gradient bound $h_r^2 \leq \frac{h}{v}$.  Now that we know $v$ is uniformly H\"older, the coefficients in the evolution for $h$ are uniformly H\"older.  We can multiply by a cut-off function to get a solution to an inhomogeneous equation with zero boundary data.  Then the Schauder estimate gives a uniform bound on $|h_{rr}|$.  Theorem 3 of \cite{schauder_schlag} has a clean statement of the Schauder estimate in a more general case than we need here.  (At the moment we are not actually applying the Schauder estimate to a system, only to the equation for $h$.)

Once we have the uniform bound on $|h_{rr}|$ we get that the coefficients in the evolution of $f$ are H\"older, so applying the Schauder estimate gives a $C^2$ bound on $f$.
\end{proof}

Outside of the region where $r=0$, the relevant functions satisfy nice parabolic equations in one dimension, and we can actually get regularity up to the starting time $t=0$.
\begin{lemma}\label{regularity_outer}
  Let $v$ and $h$ be as in the hypotheses of Lemma \ref{regularity_tip}, but instead defined on $[r_1, r_2] \times [0, t_2)$ with $r_1 > 0$.  Suppose also that at $t=0$ both have $H^1([r_1, r_2])$ bounds:
  \begin{align}
    \label{eq:1}
    \int_{r_1}^{r_2} (\partial_r v(r, 0))^2  dr < C_1, \qquad     \int_{r_1}^{r_2} (\partial_r h(r, 0))^2  dr < C_1.
  \end{align}
  
  Let $r_1'$ and $r_2'$ be given with $r_1 < r_1' < r_2' < r_2$.  There is a $C_2$ depending on $C_1, r_1, r_1', r_2, r_2', p, q$ so that
  \begin{align}
    \label{eq:10}
    \sup_{[r_1', r_2'] \times [0, t_2)} |\Rm| < C_2
  \end{align}
\end{lemma}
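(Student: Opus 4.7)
The plan is to mirror the strategy of Lemma \ref{regularity_tip} --- De Giorgi-Nash-Moser H\"older regularity followed by Schauder bootstrap --- but in the simpler one-dimensional setting where $r \geq r_1 > 0$ removes the need to treat the origin as an interior point of a higher-dimensional symmetric problem, and with the added task of extending the estimates up to the initial time $t = 0$ via the $H^1$ hypothesis.

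First I would verify uniform parabolicity on $[r_1, r_2] \times [0, t_2)$: the coefficient $v$ of $v_{rr}$ in \eqref{fopv_def} and of $h_{rr}$ in \eqref{foph_def} lies in $[C_1^{-1}, 1]$; the lower-order factors $r^{-1}(q \pm 1 \pm v)$ and $r^{-2} v(1-v)$ are bounded because $r \geq r_1 > 0$; and the nonlinear term $v h_r^2 / (4h)$ in the $v$-equation is controlled by the gradient bound \eqref{h_gradbnd}. Thus \eqref{fopv_def}, \eqref{foph_def} form a uniformly parabolic quasilinear system in one space dimension with $L^\infty$ coefficients.

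Second, I would use the $H^1$ initial data to push H\"older regularity down to $t = 0$. By the Sobolev embedding $H^1([r_1, r_2]) \hookrightarrow C^{1/2}([r_1, r_2])$, the initial data $v(\cdot, 0), h(\cdot, 0)$ are H\"older continuous with norm controlled by $C_1$. The parabolic De Giorgi-Nash-Moser theorem applied with H\"older initial data (an initial-boundary version, e.g. from Chapter 7 of \cite{liebz}) then yields $v, h \in C^{\alpha}([r_1'', r_2''] \times [0, t_2))$ for any $r_1 < r_1'' < r_2'' < r_2$, with norm depending only on the allowed quantities. Schauder bootstrap as in Lemma \ref{regularity_tip} --- first applied to the $h$-equation, whose coefficients are now H\"older, and then to the $v$-equation --- gives $v, h \in C^{2+\alpha, 1+\alpha/2}$ on $[r_1', r_2'] \times [0, t_2)$, and since the sectional curvatures \eqref{eq:101}--\eqref{eq:103} are smooth functions of $v, v_r, h, h_r, h_{rr}$ (with $r$ bounded away from zero), the $|\Rm|$ bound follows.

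The main obstacle is handling the initial-time boundary when the initial data is only in $H^1$. In the paper's intended application, $v, h$ will come from smooth mollifications of the singular metric and so are genuinely $C^{2,1}$ up to $t = 0$; the $H^1$ bound is a \emph{uniform quantitative} constraint on the family of approximations, not a regularity statement. The task is therefore to extract estimates whose constants depend only on the $H^1$ norm of the initial data, and parabolic boundary regularity with $C^{1/2}$ initial data --- supplied by Sobolev embedding in one dimension --- is exactly the tool that achieves this. The inward shrinking from $[r_1, r_2]$ to $[r_1', r_2']$ is forced by the fact that boundary data at $r = r_1$ and $r = r_2$ is not controlled by the hypotheses, and is handled by standard interior cutoffs.
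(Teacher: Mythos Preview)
Your approach is correct in spirit and reaches the same conclusion, but the route to H\"older regularity differs from the paper's. The paper does not invoke De Giorgi--Nash--Moser with H\"older initial data; instead it multiplies $v$ by a spatial cutoff $\eta$ to obtain a parabolic equation with zero lateral boundary data and bounded right-hand side, then applies the $L^2$ \emph{energy estimate} (Theorem 6.1 of \cite{liebz}) to propagate the $H^1$ bound from $t=0$ to all $t \in [0,t_2)$. The one-dimensional Sobolev embedding $H^1 \hookrightarrow C^{1/2}$ is then applied at \emph{each} time slice, yielding a spatial $C^{1/2}$ bound uniform in $t$, after which the Schauder bootstrap proceeds exactly as in Lemma \ref{regularity_tip}. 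Your route instead uses the Sobolev embedding only at $t=0$ and then relies on an initial--boundary version of De Giorgi--Nash--Moser to carry H\"older continuity forward. Both are valid; the paper's energy-method approach is arguably the more natural exploitation of the $H^1$ hypothesis (it is precisely the norm that $L^2$ parabolic theory propagates), while your approach stays entirely within the $C^\alpha$ framework but requires a slightly more delicate citation for the up-to-initial-time H\"older estimate.
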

\begin{proof}
  Taking $\eta$ to be a static smooth cut-off function, $v\cdot \eta$ satisfies a parabolic equation on $[r_1, r_2]$ with zero boundary data, bounded coefficients and bounded right hand side.  (In particular, the terms involving $1/r$ are bounded because $r_1 > 0$).  Therefore for all $t$ the solution is uniformly in $H^1([r_1, r_2])$  (Theorem 6.1 of \cite{liebz}).  The Sobolev embedding in one dimension tells us that $v$ is in $C^\alpha$ for $\alpha \leq 1/2$.  Then as in the proof of Lemma \ref{regularity_tip}, the conclusion follows from the Schauder estimate.
\end{proof}

\subsection{Mollified flows}\label{mollified_defns}
We create smooth initial metrics which approximate our singular metric $g_{init}$.

\begin{definition}
  Let $g_{init}$ be given satisfying the assumptions of Theorem \ref{main_theorem}.  Let $\{g_{\omega}\}_{\omega  \in (0, \omega_*)}$ be a one parameter family of smooth metrics on $M$ such that
  \begin{itemize}
  \item $\rest_{r > \sqrt{\omega}}$ is identical for $g_{init}$ and $g_{\omega}$, and the metrics agree there.
  \item $g_\omega$ is a smooth metric and satisfies the gradient bound \eqref{grad_bnd}.
  \item For some fixed $r_\#$, $\phi_\omega$ is increasing for $r < r_\#$, so the $r$ coordinate is defined on $\north_{r<r_\#}$.  (Recall Definition \ref{north} for the notation $\north_{r < r_\#}$.)  In terms of these coordinates, $h_\omega(0)$ and $v_\omega(0)$ lie between barriers from Theorem \ref{barriers_full}, evaluated at a small positive time.  In particular, take the barriers for any small $\epsilon$ and $\delta$, and evaluate them at time
    $$t_{\omega} = \frac{\epsilon}{ \sqrt{C_3}}\omega$$
    for a sufficiently large $C_3$.
  \end{itemize}
\end{definition}
We take $t_\omega \lesssim \omega$ above because we need very little time to pass, so that the barriers still trap the initial function at $r = \sqrt{\omega}$. In particular $(r,t)  = (\sqrt{\omega}, t_\omega)$ is in the outer region.

Each $g_\omega$ has $\sup |\Rm| < C_\omega$.  Shi's short time existence theorem \cite{Shi} gives us a Ricci flow starting from $g_\omega$ on some time interval $[0, T_\omega)$.  (We must turn to \cite{Shi} here because we allow for noncompact initial data.)  We wish to show first that in fact $T_\omega \geq T_0 > 0$.  Then, we want bounds on curvatures away from the singularity.  This allows us to get convergence of a subsequence $g_{\omega_k}$ to a solution to Ricci flow emerging from the initial metric.

  In principle the existence time and regularity works because the relevant functions will satisfy partial differential equations similar to
\begin{align}
  \label{eq:11}
  \partial_t u(x, t) = \partial_x(a(u)\partial_xu) + b(u)\partial_xu + c(u) u
\end{align}
whose solutions are great as long as the coefficients $a(u)$, $b(u)$ and $c(u)$ do not do anything bad.  The barriers will tell us that the coefficients behave reasonably.  There is some extra work because our barriers are not defined on the whole of the manifold $M$, and so the solution might cross the barriers on the edge of the domain where they are defined (where $r = r_*$). This problem is overcome in Lemmas \ref{rest_bound} and \ref{cross_time_bound} because on the complement of the region where the barriers are defined, the initial  metrics are uniformly smooth (and in fact identical).

\subsection{Outer Control}\label{uniform_crossing_time}
Let us define $\Tcross_\omega$ to be the first time that one of the following fails (or $\Tcross_\omega = T_\omega$ if the metric becomes singular before on of the following happen):
\begin{enumerate}
\item\label{item:coord} The function $\phi_\omega$ is not strictly increasing (from the tip) until $\phi_\omega = r_*$, and therefore the $r$ coordinate is defined up to $r=r_*$.
\item\label{item:cross} The functions $v_\omega(r,t)$ and $h_\omega(r,t)$ do not cross the time-shifted barriers at $r = r_*$. 
\end{enumerate}
Thus on the interval $[0, \Tcross_\omega)$ both of the above hold for the metric $g_\omega$.  Both (\ref{item:coord}) and (\ref{item:cross}) both hold for a small time (because they hold for $g_\omega(0)$ and the functions $v_\omega = (\partial_s\phi_\omega)^2$ and $h = \psi_\omega^2$ are continuous in time).  Also (\ref{item:coord}) cannot fail before $v_\omega \searrow 0$ for some $r < r_*$.   Since $v_\omega^{\pm}(r, t) > 0$, the avoidance principle implies that while  (\ref{item:cross}) holds, (\ref{item:coord}) will hold.  So $\Tcross_\omega$ can actually be characterized as the first time when (\ref{item:cross}) fails.

We want to show that $\Tcross_\omega = \min(T_\omega, T_0)$ for $T_0$ independent of $\omega$.  We do this in Lemmas \ref{basic_strip_bound} to \ref{cross_time_bound}.  If $M$ is compact then an alternative way to get the $|\Rm|$ bounds in Lemmas \ref{basic_strip_bound} and \ref{rest_bound} would be to use pseudolocality theorem (Theorem 10.3 of \cite{Perelman}).  As the metrics $g_\omega(0)$ all agree on $\rest_{r > r_*}$ they will satisfy uniform curvature and volume bounds on a small enough scale, so there is a uniform curvature bound on a smaller scale as long as the Ricci flow exists.  This was the method used in \cite{conicalSing}.

There is a technicality in the following discussion.  The set $\north_{r < r_*}$  (defined in Definition \ref{north}) depends on time and $\omega$, but we can think of $v$ and $h$ as defined on the fixed box $[0, r_*] \times [0, \Tcross_\omega]$.  
\begin{lemma}\label{basic_strip_bound}
  Consider the metrics $g_\omega$, which have the $r$ coordinate defined on $\north_{r<r_*}$ and are trapped between the barriers on $[0, r_*] \times [0, \Tcross_{\omega})$.

  Then for some time we have curvature bounds in a smaller region.  Precisely, there is a $T_0$ so that
  \begin{align}
    \label{eq:7}
    \sup_{t \in [0, \min(\Tcross_\omega, T_0)]} \quad \sup_{p \in \north_{\frac{1}{2}r_* < r < \frac{3}{4}r_*} } |\Rm_{g_\omega(t)}|(p) < C_0
  \end{align}
  for some $C_0$ independent of $\omega$.
\end{lemma}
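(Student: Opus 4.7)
The plan is to reduce the claim to Lemma~\ref{regularity_outer} applied to a slightly larger annulus in the $r$-coordinate. First I would fix auxiliary radii $r_1, r_2$ with $0 < r_1 < \tfrac{1}{2}r_* < \tfrac{3}{4}r_* < r_2 < r_*$, and restrict attention to those $\omega$ small enough that $\sqrt{\omega} < r_1$. For such $\omega$ the mollification $g_\omega(0)$ agrees with $g_{init}$ throughout $\{r \in [r_1, r_2]\}$, so by assumption \ref{rm_bnd} of Theorem~\ref{main_theorem} the functions $v_\omega(\cdot, 0)$ and $h_\omega(\cdot, 0)$ are smooth on $[r_1, r_2]$, with all Sobolev norms bounded uniformly in $\omega$.

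Next I would choose $T_0$ proportional to $r_1^2$ so that $\sqrt{t/\epsilon}\,C_1 < r_1$ for all $t \in [0, T_0]$; this places the entire strip $[r_1, r_2] \times [0, T_0]$ inside the outer region of Lemma~\ref{barriers_full}. On that strip the outer barriers sandwich $v_\omega, h_\omega$ between small perturbations of the smooth, strictly positive functions $v_{init}, h_{init}$, which yields uniform bounds
\[
C^{-1} \leq v_\omega,\ h_\omega \leq C, \qquad f_\omega = \frac{1-\sqrt{v_\omega}}{r^2} \leq C
\]
on $[r_1, r_2] \times [0, \min(\Tcross_\omega, T_0))$. Combined with the preserved gradient bounds from Lemma~\ref{doubly_gradbnd} and the initial $H^1$ bounds above, this verifies every hypothesis of Lemma~\ref{regularity_outer}. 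Applying that lemma with inner annulus $[\tfrac{1}{2}r_*, \tfrac{3}{4}r_*]$ then produces the asserted uniform bound on $|\Rm_{g_\omega(t)}|$, and by Definition~\ref{north} the inner annulus is precisely $\north_{\frac{1}{2}r_* < r < \frac{3}{4}r_*}$.

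The only real technical point is the choice of $T_0$: it must be small enough relative to $r_1$ to keep the strip out of the parabolic and tip regions, since there the lower bound on $v$ degenerates to zero and the coefficients in the equation for $v_\omega$ (in particular $f_\omega$) cease to be uniformly bounded. Once that scale is arranged, no further PDE work is needed; the argument is simply a verification of hypotheses followed by invocation of Lemma~\ref{regularity_outer}.
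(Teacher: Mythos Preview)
Your proposal is correct and follows the same approach as the paper: both reduce the claim to an application of Lemma~\ref{regularity_outer} on a slightly larger annulus, using the barriers to supply the required $L^\infty$ bounds and the agreement with $g_{init}$ away from the tip to supply the initial $H^1$ bounds. The paper's own proof is extremely terse (it simply asserts that the hypotheses of Lemma~\ref{regularity_outer} hold on $[\tfrac14 r_*, r_*]$), whereas you have spelled out the verification, including the point that $T_0$ must be taken small enough to keep the strip inside the outer region so that the lower bound on $v$ does not degenerate---a detail the paper leaves implicit.
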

\begin{proof}
  As long as the metric coefficients are trapped between the barriers, they satisfy the hypotheses of Lemma \ref{regularity_outer} on the interval $\left[\frac{1}{4}r_*, r_*\right]$, which uses results for parabolic PDE in one dimension to control $|\Rm|$ on the smaller interval $[\frac{1}{2}r_*, \frac{3}{4}r_*]$.
\end{proof}

The uniform bound on $|\Rm|$ in the strip $\north_{ar_* < r < br_*}$ lets us get a uniform bound on $|\Rm|$ away from $M - \north_{r < ar_*} = \rest_{r > ar_*}$. 

\begin{lemma}\label{rest_bound}
  There is a $T_0$ and $C_0$ independent of $\omega$ such that for $t \in [0, \min(T_0, \Tcross_\omega)]$ and $p \in \rest_{r > ar_*}$ we have
  \begin{align}
    \label{eq:19}
    |\Rm_{g_\omega(t)}|(p) < C_0.
 \end{align}
 Also, there are constants $C_k$ for $k \in \nats$ so that for points $p$ in the smaller region $\rest_{r > br_*}$ we have
 \begin{align}
   \label{eq:67}
   |\nabla^k \Rm_{g_\omega(t)} |(p) < C_k
 \end{align}
\end{lemma}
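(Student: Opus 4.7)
The plan is to cover $\rest_{r > ar_*}$ by two pieces and bound $|\Rm|$ on each for a time independent of $\omega$. Fix constants $b < a$ small enough (e.g. $a = \tfrac{1}{2}$, $b = \tfrac{1}{4}$). Split into an annular piece $A_1 = \{ar_* \leq \phi \leq r_*\} \cap \north_{r < r_*}$ inside the barrier-controlled region, and an outer piece $A_2 = \rest_{r > \tfrac{3}{4}r_*}$. Together these cover $\rest_{r > ar_*}$.

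For $A_1$, the barriers of Lemma \ref{barriers_full} trap $v$ and $h$ uniformly bounded above and below away from zero on a neighborhood of $A_1$ for all $t \in [0, \Tcross_\omega)$, and the gradient bounds \eqref{v_gradbnd}, \eqref{h_gradbnd} supply uniform $C^1$ control. Since $g_\omega(0)$ coincides with $g_{init}$ on $\rest_{r > \sqrt{\omega}} \supset A_1$ once $\omega$ is small, item \ref{rm_bnd} of Theorem \ref{main_theorem} yields uniform (in $\omega$) $H^1$ — in fact $C^\infty$ — bounds on $v_\omega(0)$ and $h_\omega(0)$ on $A_1$. Lemma \ref{regularity_outer}, applied on an interval strictly containing $[ar_*, r_*]$, then provides a uniform $|\Rm| \leq C$ on $A_1$ for $t \in [0, \min(\Tcross_\omega, T_0)]$.

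For $A_2$, all mollified metrics coincide with $g_{init}$ on $A_2$ for $\omega$ small, and by item \ref{rm_bnd} of Theorem \ref{main_theorem}, $g_{init}$ has uniform bounds on $|\nabla^k \Rm|$ on a neighborhood of $A_2$ for every $k$. The inner boundary of $A_2$ (at $\phi = \tfrac{3}{4}r_*$) sits in the interior of the region on which Step 1 provides uniform curvature control, so the evolving metric and its derivatives are uniformly controlled there. Applying Shi's local short-time estimate (or equivalently the pseudolocality theorem when $M$ is compact, as noted before the lemma) to the warped-product system \eqref{evo_psi}, \eqref{evo_phi}, \eqref{evo_sprime} — a quasilinear parabolic system in a single spatial variable with uniformly smooth initial data on $A_2$ — gives a uniform bound $|\Rm| \leq C$ on $A_2$ for $t \in [0, \min(\Tcross_\omega, T_0')]$. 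After shrinking $T_0$, combining the two steps yields the asserted $C^0$ bound on $\rest_{r > ar_*}$.

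The derivative bounds on $\rest_{r > br_*}$ then follow from Shi's derivative estimates: the uniform $|\Rm|$ bound on the larger set $\rest_{r > ar_*}$ together with $b < a$ produces, at each $p \in \rest_{r > br_*}$, a $g_\omega(t)$-metric ball $B_{g_\omega(t)}(p, \delta) \subset \rest_{r > ar_*}$ of radius $\delta$ uniform in $\omega$ and $t$; Shi's estimates then give $|\nabla^k \Rm|(p) \leq C_k$. The main obstacle is the uniform-in-$\omega$ short-time propagation in Step 2 when $L = \infty$, where the manifold is noncompact and there is no global short-time control independent of $\omega$; this is handled by localizing Shi's estimate via cutoffs, reducing the argument to uniformly parabolic 1D estimates with uniformly smooth initial and boundary data fed in from Step 1.
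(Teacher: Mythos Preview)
Your approach is workable but takes a longer road than the paper's. The paper does not decompose $\rest_{r > ar_*}$ at all: it applies the maximum principle directly to the evolution inequality
\[
\partial_t |\Rm|^2 \leq \Delta |\Rm|^2 + c\,(|\Rm|^2)^{3/2}
\]
on $\rest_{r > ar_*}$. The initial bound is uniform because $g_\omega(0) = g_{init}$ there, the boundary bound at $r = ar_*$ comes from Lemma~\ref{basic_strip_bound}, and comparison with the ODE $\dot U = cU^{3/2}$ yields a uniform $|\Rm|$ bound for a definite time in one stroke, with no separate treatment of the noncompact end. Your Step~2, by contrast, invokes ``Shi's local short-time estimate,'' which is not a standard tool for the task you describe (propagating a $|\Rm|$ bound on a region with boundary, uniformly in $\omega$); reducing to 1D parabolic estimates as you suggest can be made to work, but this is precisely the extra effort the maximum-principle argument sidesteps. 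The derivative bounds are then obtained in both approaches via local Shi estimates.

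One genuine slip: you take $b < a$, but in the statement $\rest_{r > br_*}$ is the \emph{smaller} region, which forces $b > a$ (the paper effectively has $a = \tfrac12$, $b = \tfrac34$). With your ordering the final Shi step fails as written, since points of $\rest_{r > br_*}$ near $r = br_*$ do not have metric balls contained in $\rest_{r > ar_*}$.
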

\begin{proof}
  On $\rest_{r > ar_*}$, $|\Rm|$ is uniformly bounded at the initial time.
  Under Ricci flow $|\Rm|$ satisfies
  \begin{align}
    \label{eq:20}
    \partial_t |\Rm|^2 \leq \lap |\Rm|^2 + c (|\Rm|^2)^{3/2}.
  \end{align}
  Here $c$ depends only on the dimension of the space.  Lemma \ref{basic_strip_bound} tells us that $|\Rm|$ is bounded near the boundary of $\rest_{r > ar_*}$.  The first part of the theorem follows from the maximum principle.

  To get the control on the derivatives, we use the local Shi's estimates given in 14.4.1 of \cite{bookanalytic}.  These control derivatives on a fixed compact set $K$, whereas $\rest_{r \geq r_*}$ is changing in time; this is the technicality mentioned earlier.  This problem is easily overcome because we have bounds on $|\Rm|$ and hence on $\partial_t g_{\omega}$.  This implies that the time-dependent set $\north_{r > br_*}[g_\omega(t)]$ is contained in the fixed set $\north_{r > ar_*}[g_\omega(0)]$ for small time.  We apply the local Shi estimates on this fixed compact set.
\end{proof}

\begin{lemma}\label{cross_time_bound}
  There is a $T_0$ independent of $\omega$ so that $\Tcross_\omega \geq \min(T_0, T_\omega)$.
\end{lemma}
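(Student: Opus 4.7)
The plan is by contradiction. Suppose that for every $T_0 > 0$ there exists $\omega$ with $\Tcross_\omega < \min(T_0, T_\omega)$. I will combine two facts to derive a contradiction: an $\omega$-uniform Lipschitz-in-time bound on $v_\omega(r_*, \cdot)$ and $h_\omega(r_*, \cdot)$, together with an $\omega$-uniform positive gap at $t = 0$ between these values and the time-shifted barriers $v^{\pm}(r_*, t + t_\omega)$, $h^{\pm}(r_*, t + t_\omega)$ evaluated at $t = 0$.

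For the speed limit I choose radii $a r_* < b r_* < r_*$ so that the hypersurface $\{r = r_*\}$ lies strictly in the interior of $\rest_{r > b r_*}$. Lemma \ref{rest_bound} then supplies $\omega$-independent bounds on $|\nabla^k \Rm_{g_\omega(t)}|$ in $\rest_{r > b r_*}$ for $k \leq 2$, valid on $[0, \min(\Tcross_\omega, T_0^{(1)})]$ for some $T_0^{(1)} > 0$. Each term in $\fopv$ and $\foph$ of \eqref{fopv_def}, \eqref{foph_def} evaluated at $r = r_*$ --- namely $v_{rr}$, $v_r^2$, $r^{-1}v_r$, $r^{-2}v(1-v)$ and $vh_r^2/h^2$ --- is then controlled by this curvature and one-derivative-of-curvature data (the $1/r$ factor is harmless at the fixed positive radius $r_*$, and $h$ is bounded below because the barrier $h^-$ is positive near $r_*$). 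Hence $|\partial_t v_\omega(r_*, t)|$ and $|\partial_t h_\omega(r_*, t)|$ are bounded by a constant independent of $\omega$ on that time interval.

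For the gap at time zero: since $g_\omega$ coincides with $g_{init}$ on $\rest_{r > \sqrt{\omega}}$ and $\sqrt{\omega} < r_*$ for $\omega$ small, we have $v_\omega(r_*, 0) = v_{init}(r_*) = v_0(r_*)$ and similarly for $h$. The outer barrier formulas from Table \ref{barrier_table} give $v^{\pm}(r_*, t_\omega) = a_\pm^2 (1 + O(t_\omega)) v_0(r_*)$ with $a_-^2 < 1 < a_+^2$, and analogously $h^{\pm}(r_*, t_\omega) = b_\pm^2 (1 + O(t_\omega)) h_0(r_*)$; hence each of the four gaps at $(r_*, t_\omega)$ is bounded below by a positive constant independent of $\omega$ provided $t_\omega$ is sufficiently small, which is automatic since $t_\omega \to 0$ as $\omega \to 0$.

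Combining the two ingredients with the obvious smoothness in $t$ of the barriers evaluated at the fixed radius $r_*$ produces an $\omega$-independent $T_0 \leq T_0^{(1)}$ on which $v_\omega(r_*, t)$ and $h_\omega(r_*, t)$ cannot reach the time-shifted barriers --- contradicting $\Tcross_\omega < T_0$. The main obstacle is in the speed-limit step: the operators $\fopv$, $\foph$ are second order in $r$, so a bound on $|\Rm|$ alone is insufficient, and I need the derivatives-of-curvature bounds that Lemma \ref{rest_bound} provides in the smaller region $\rest_{r > b r_*}$. This is why I introduce the nested radii $a r_* < b r_* < r_*$ rather than a single cutoff, so that the second spatial derivatives of $v$ and $h$ at $r = r_*$ (which appear in $\fopv$, $\foph$) sit in the region with the stronger regularity.
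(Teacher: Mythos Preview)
Your proof is correct and follows essentially the same strategy as the paper's: invoke Lemma~\ref{rest_bound} for $\omega$-uniform curvature and derivative-of-curvature bounds in $\rest_{r>br_*}$, deduce that $v_\omega(r_*,\cdot)$ and $h_\omega(r_*,\cdot)$ can change only at a uniform rate, and combine this with the $\omega$-uniform positive gap at $t=0$ to forbid a crossing before some fixed $T_0$. The only cosmetic difference is that the paper phrases the speed limit by writing $g_\omega(t)$ in the fixed initial $r$-coordinate and tracking the perturbations $\alpha,\beta,\gamma$ of the metric coefficients, whereas you bound $\partial_t v$ and $\partial_t h$ directly from the evolution equations \eqref{fopv_def}, \eqref{foph_def}; both routes rest on the same derivative-of-curvature input and are equivalent.
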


\begin{proof}
  Lemma \ref{rest_bound} gives us a uniform bound on $|\Rm|$ and its derivatives on $\rest_{r > br_*}$, which implies that in a small time interval the metric and its derivatives, in some fixed coordinate system, can only change so much there.  In particular we can write $g_\omega(t)$ near $r_*$ as
  \begin{align}
    \label{eq:68}
    g_\omega(t) &= \left(\frac{1}{v_{init}(r, 0)} + \alpha(r, t)\right) dr^2 \\
    &+ (h_{init}(r, 0) + \beta(r, t)) g_{S^p}
    + (r^2 + \gamma(r, t))g_{S^q},
  \end{align}
  with $r$ signifying a fixed coordinate, and where $\alpha$, $\beta$, and $\gamma$ ae zero at $t=0$.  By Lemma \ref{rest_bound}, for small time $\alpha$, $\beta$, and $\gamma$ will be small with small derivatives, independently of $\omega$.    At $r = r_*$, the functions $v(r,t)$ and $h(r,t)$ depend in smooth ways on $\alpha$, $\beta$, and $\gamma$. By choosing $T_0$ small enough, $v$ and $h$ will not have enough time to cross the barriers.
\end{proof}

\subsection{Uniform curvature bounds and convergence}\label{convergence}

\begin{lemma}\label{uniform_exist}
  There is $T_0$ independent of $\omega$ such that $T_\omega \geq T_0$.  
\end{lemma}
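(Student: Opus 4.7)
The plan is to argue by the standard continuation criterion for Ricci flow: if $T_\omega$ is the maximal existence time and is finite, then $\sup_{[0, T_\omega)}|\Rm_{g_\omega}| = +\infty$. I would suppose for contradiction that $T_\omega \leq T_0$, where $T_0$ denotes the time from Lemma \ref{cross_time_bound}, and produce a finite $|\Rm|$-bound on $[0, T_\omega)$. Under this assumption $\min(T_0, T_\omega) = T_\omega$, so Lemmas \ref{rest_bound} and \ref{cross_time_bound} and the barriers of Lemma \ref{barriers_full} all apply on $[0, T_\omega)$.

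In the exterior $\rest_{r > ar_*}$, Lemma \ref{rest_bound} already furnishes a uniform (in $\omega$) $|\Rm|$-bound. Fix $r_1 > 0$ small. In the intermediate annular region where $r_1 \leq r \leq r_*$, the barrier trapping from Lemma \ref{cross_time_bound} keeps $v_\omega$ and $h_\omega$ in a fixed compact set bounded away from $0$ and $\infty$; for $\omega$ so small that $\sqrt{\omega} < r_1$, the mollified metric $g_\omega$ equals $g_{init}$ on this annulus at $t=0$, supplying the uniform $H^1$ bound needed by Lemma \ref{regularity_outer}. That lemma then yields a uniform $|\Rm|$-bound on this annulus for all $t \in [0, T_\omega)$.

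For the remaining tip piece where $r < r_1$, fix an auxiliary $t_1 \in (0, T_\omega)$ and, by shrinking $r_1$ if necessary, arrange that $[0, r_1]$ lies in the tip barrier region $\{\zeta < 2C_2/\sqrt{\epsilon}\}$ for every $t \in [t_1, T_\omega)$; this is possible because the tip region is $\{r < 2C_2\sqrt{t/(\epsilon|\log t|)}\}$, which is bounded below in $r$ uniformly for $t \geq t_1$. The tip barriers of Table \ref{barrier_table} force $v_\omega$ to stay close to a Bryant soliton profile $\bry(c_\pm \zeta)$, which is bounded away from $0$ on the relevant compact $\zeta$-interval, and force $h_\omega$ close to the positive constant $\alpha_q^4 \bar c_\pm^2$. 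The smoothness of the Bryant soliton at $\zeta = 0$ gives $\bry(\zeta) = 1 - O(\zeta^2)$, yielding a uniform upper bound on $f = (1-\sqrt{v_\omega})/r^2$; the Neumann conditions at $r = 0$ are inherited from the smoothness of $g_\omega$ there, and the gradient bounds of Lemma \ref{doubly_gradbnd} are preserved. Thus Lemma \ref{regularity_tip} applies uniformly in $\omega$ and produces a uniform $|\Rm|$-bound on $[0, r_1] \times [t_1, T_\omega)$.

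Combining the three regions gives a uniform (in $\omega$) $|\Rm|$-bound on $[t_1, T_\omega)$. On $[0, t_1] \subset [0, T_\omega)$ the flow $g_\omega$ is smooth, so $|\Rm_{g_\omega}|$ is finite there (not uniformly in $\omega$, but that is enough). Hence $\sup_{[0, T_\omega)}|\Rm_{g_\omega}| < \infty$, contradicting the continuation criterion whenever $T_\omega \leq T_0$, so $T_\omega > T_0$. The main obstacle is the scale interplay between $r_1$ and $t_1$: the tip barrier region collapses to $r = 0$ as $t \to 0$, so the tip regularity cannot be pushed all the way to $t = 0$ on a fixed interval $[0, r_1]$, and uniformity in $\omega$ must be sacrificed on $[0, t_1]$—which is acceptable only because the continuation criterion demands merely finiteness, not uniformity. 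A secondary technical point is checking that the tip barriers really supply the $L^\infty$ bound on $f$ demanded by Lemma \ref{regularity_tip}, which reduces to the expansion $\bry(\zeta) = 1 - O(\zeta^2)$ for the Bryant soliton at its pole.
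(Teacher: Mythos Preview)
Your approach is correct and follows the same strategy as the paper: invoke the continuation criterion, use Lemma~\ref{cross_time_bound} to keep the barriers in force on $[0,T_\omega)$, and then combine the regularity lemmas with Lemma~\ref{rest_bound} to bound $|\Rm|$ near $T_\omega$. The paper's proof is terser in two ways. First, it applies Lemma~\ref{regularity_tip} on the entire region $\north_{r<r_*}$ rather than splitting off an annulus $[r_1,r_*]$ for Lemma~\ref{regularity_outer}; the glued barriers (shifted by $t_\omega>0$) already supply the required $L^\infty$ bounds on $f$, $h$, $v^{-1}$, $h^{-1}$ over all of $[0,r_*]$, so no intermediate annulus is needed. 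Second, the paper stresses at the outset that uniformity in $\omega$ is irrelevant here---one only needs \emph{some} finite curvature bound for each fixed $\omega$---which renders your restriction ``for $\omega$ so small that $\sqrt\omega<r_1$'' unnecessary (for any $\omega$ the smooth mollified metric $g_\omega(0)$ gives the needed initial bounds, just not uniformly). Your explicit discussion of the expansion $\bry(\zeta)=1-O(\zeta^2)$ and the interplay between $r_1$ and $t_1$ spells out details the paper leaves implicit.
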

\begin{proof}
  Ricci flow exists up to a time when $|\Rm|$ goes to infinity.  First, suppose $T_\omega$ is smaller than the time when the functions $v$ and $h$ are trapped between barriers, and we will bound $|\Rm|$.  Here we are not worried about a uniform with respect to $\omega$, only about getting some bound if $T_\omega$ is small.

  By Lemma \ref{cross_time_bound}, the metric coefficients do not cross the barriers at $r = r_*$, so by the avoidance principle they are trapped between the barriers on $\north_{r < r_*}$.  If $T_\omega$ is smaller than the time when the coefficients are trapped, then we can bound $|\Rm|$ by using Lemma \ref{regularity_tip} in $\north_{r < r_*}$, Lemma \ref{regularity_outer} in $\north_{ar_* < r < br_*}$, and Lemma \ref{rest_bound} in $\rest_{r > ar_*}$.  
\end{proof}

\begin{lemma}
  There is a $T_0$ independent of $\omega$ so that we have the following curvature bounds for $t < T_0$.

For any $r_1>0$ and $k \in \nats \cup \{0\}$ there is $C_k(r_1) < \infty$ such that
  \begin{align}
    \label{eq:8}
    \sup_{t \in [0, T_0]} \quad \sup_{p \in \rest_{r> r_1}} |\nabla^k \Rm_{g_\omega(t)}|(p) < C_k(r_1).
  \end{align}

  For any $t_1>0$ and $k \in \nats \cup \{0\}$ there is $\tilde C_k(t_1)< \infty$ such that
  \begin{align}
    \label{eq:15}
    \sup_{t \in [t_1, T_0]} \quad \sup_{p \in M} |\Rm_{g_\omega(t)}|(p) < \tilde C_k(t_1)
  \end{align}
\end{lemma}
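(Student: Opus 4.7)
The plan is to assemble the ingredients already in place: the barrier trapping on $[0, r_*] \times [0, T_0]$ (guaranteed by Lemma~\ref{barriers_full} together with Lemma~\ref{cross_time_bound}), the curvature bounds away from the tip (Lemma~\ref{rest_bound}), the two parabolic regularity statements (Lemmas~\ref{regularity_tip} and~\ref{regularity_outer}), and Shi's local derivative estimates. I would split each of the two bounds into a ``tip'' piece and an ``outer'' piece and then glue.

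For the first inequality, fix $r_1 > 0$. If $r_1 \geq a r_*$ (with $a$ the constant from Lemma~\ref{rest_bound}) there is nothing new to do: Lemma~\ref{rest_bound} directly gives the $|\nabla^k \Rm|$ bounds. Otherwise, write $\rest_{r > r_1} = \rest_{r > a r_*} \cup \north_{r_1 < r < a r_*}$. The first piece is handled by Lemma~\ref{rest_bound}. For the second piece, apply Lemma~\ref{regularity_outer} on the slightly larger spatial interval $[r_1/2, 2 a r_*]$: the barriers of Lemma~\ref{barriers_full} bound $v_\omega$ and $h_\omega$ above and below by positive constants there, and the initial $H^1$ bound is available because $g_\omega(0)$ is smooth and, at the scale we need, uniform in $\omega$ (the mollifications are identical for $r \geq \sqrt{\omega}$, which is much smaller than $r_1/2$ for small $\omega$). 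This yields a uniform $|\Rm|$ bound, and then Shi's local-in-space estimates on a slightly shrunk strip — applied on a fixed coordinate box, as in Lemma~\ref{rest_bound} — promote this to $|\nabla^k \Rm|$ bounds.

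For the second inequality, fix $t_1 > 0$; it suffices to control $|\Rm|$ on the tip region $\north_{r < r_*/2}$ for $t \in [t_1, T_0]$ and glue with the first inequality (used with $r_1 = r_*/4$). On the tip region the barriers have the form $v_\omega \in [\bry(c_- \zeta) + \cdots, \bry(c_+ \zeta) + \cdots]$ with $\zeta = r \sqrt{|\log t|/t}$. Since the Bryant soliton is smooth at its tip, $\bry(x) = 1 - C x^2 + O(x^4)$ near $x = 0$, so the barriers force $1 - v_\omega = O(r^2 |\log t|/t)$, which gives
\begin{equation*}
f_\omega = \frac{1 - \sqrt{v_\omega}}{r^2} \leq \frac{1 - v_\omega}{r^2} \leq C \,\frac{|\log t|}{t} \leq C(t_1) \quad \text{for } t \geq t_1/2.
\end{equation*}
The barriers also bound $h_\omega$ and $v_\omega$ away from zero and above, so all hypotheses of Lemma~\ref{regularity_tip} are met on $[0, r_*] \times [t_1/2, T_0]$ with constants depending only on $t_1$. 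That lemma produces a uniform $|\Rm|$ bound on $[0, r_*/2] \times [t_1, T_0]$. For higher derivatives, I would continue the Schauder bootstrap begun in the proof of Lemma~\ref{regularity_tip} (alternating between the equations for $f$ and $h$, each time picking up one more H\"older derivative of the coefficients), or equivalently apply Shi's local derivative estimates starting from the $|\Rm|$ bound at time $t_1/2$.

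The only genuinely new check is the uniform $f$-bound on the tip region; everything else is bookkeeping to combine previous lemmas. The main obstacle, should one arise, would be subtleties in the $\omega$-uniformity of the $H^1$ initial data for Lemma~\ref{regularity_outer}, but this is forced by the fact that the mollifications $g_\omega$ are identical to $g_{init}$ on $\rest_{r > \sqrt{\omega}}$, and $\sqrt{\omega} \ll r_1$ for small $\omega$.
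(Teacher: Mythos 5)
Your proposal is correct and follows essentially the same route as the paper: trap the solution between the barriers, invoke Lemma \ref{regularity_outer} (via the strip argument) together with the maximum principle and Shi's estimates for the region $r > r_1$, and verify the hypotheses of Lemma \ref{regularity_tip} from the positive-time barriers for the region $t > t_1$. Your explicit verification that the tip barriers force $f_\omega = (1-\sqrt{v_\omega})/r^2 \leq C|\log t|/t$ via the quadratic behavior of $\bry$ near zero is exactly the point the paper's proof compresses into ``the barriers force the functions to behave well,'' so it is a useful elaboration rather than a deviation.
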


\begin{proof}
  The bounds for $r > r_1$ follow the same path as the bounds for $r > r_*$ in Section \ref{uniform_crossing_time}.  Briefly: bounds on $|\Rm|$ in $[(1/2)r_1, (3/4)r_1]$ follow from the parabolic regularity in Lemma \ref{basic_strip_bound}, then bounds  on $\rest_{r > r_1}$ follow by the maximum principle and Shi's estimates as in Lemma \ref{rest_bound}.

  Now we explain the bounds for $t > t_1$.  When  $t\geq t_1/2$ the functions $v_\omega(t)$ and $h_\omega(t)$ are between the barriers evaluated at time $t \geq t_1/2 + \omega \geq t_1/2$. For positive time the barriers force the functions to behave well, i.e. $v_\omega$ and $h_\omega$ satisfy the bounds required in Lemma \ref{regularity_tip}.  Therefore we have $|\Rm|$ bounds for $t \geq (3/4)t_1$ (in $\north_{r<r_*}$ by Lemma \ref{regularity_tip} and outside of $\north_{r<r_*}$ by the previous paragraph) and then Shi's global estimates \cite{Shi} imply bounds on derivatives of curvature for $t \geq t_1$.
\end{proof}

\begin{lemma}\label{convergence_lemma}
  A subsequence of the mollified solutions $g_\omega$ converge to a solution $g_*$ of Ricci flow on $C^\infty_{loc}(M \times [0, T_0] - \north_{s = 0} \times \{0\})$.  The solution $g_*$ agrees with $g_{init}$ at $t=0$ and is smooth for $t > 0$. 
\end{lemma}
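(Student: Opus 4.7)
The plan is to apply standard $C^\infty_{loc}$ compactness for Ricci flows, using the uniform curvature-derivative bounds established in the preceding lemma. Those bounds come in two flavors: \eqref{eq:8} controls $|\nabla^k \Rm_{g_\omega}|$ uniformly in $\omega$ on $\rest_{r > r_1} \times [0, T_0]$ for every $r_1 > 0$ and every $k$, while \eqref{eq:15} combined with Shi's local derivative estimates controls $|\nabla^k \Rm_{g_\omega}|$ uniformly on $M \times [t_1, T_0]$ for every $t_1 > 0$. Any compact subset $K$ of $(M \times [0, T_0]) \setminus (\north_{s=0}\times\{0\})$ is contained in a finite union of such sets, so on $K$ we have uniform $C^k$ control of $\Rm_{g_\omega}$ for every $k$.

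In the doubly warped product class these curvature bounds, combined with the preserved gradient bounds $|\partial_s\phi_\omega|, |\partial_s\psi_\omega| \leq 1$ of Lemma \ref{doubly_gradbnd}, immediately translate into uniform $C^k$ control of the warping functions $\phi_\omega, \psi_\omega$ in the fixed interval coordinate $s$: equations \eqref{eq:101}--\eqref{eq:103} express $\partial_s^2 \phi/\phi$ and $\partial_s^2 \psi/\psi$ as components of the sectional curvature, and higher $s$-derivatives are then controlled inductively by differentiating and using the bounds on $\nabla^k \Rm$. I would exhaust $(M \times [0, T_0]) \setminus (\north_{s=0}\times\{0\})$ by such compact sets $K_n$, apply Arzel\`a--Ascoli on each, and diagonalize to extract a subsequence $g_{\omega_j}$ converging in $C^\infty_{loc}$ to a smooth doubly warped product metric $g_*$. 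Passing to the limit in \eqref{evo_psi}--\eqref{evo_sprime} shows that $g_*$ is a Ricci flow, and it is smooth for $t > 0$ by construction.

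Finally, to identify the initial value: the mollifications were designed so that $g_\omega$ agrees with $g_{init}$ on $\rest_{r > \sqrt{\omega}}$ at $t=0$. Fixing any $p \in M$ with $s(p) > 0$, eventually $p \in \rest_{r > \sqrt{\omega_j}}$, so $g_{\omega_j}(0)(p) = g_{init}(p)$ for all sufficiently large $j$; the $C^\infty_{loc}$-limit then yields $g_*(0) = g_{init}$ on $M \setminus \north_{s=0}$. The only minor bookkeeping issue is that $\rest_{r > r_1}$ is time-dependent (since $r = \phi$ evolves), but the gradient bound $|\partial_s\phi| \leq 1$ and the uniform $|\Rm|$ bound in the outer strip from Lemma \ref{basic_strip_bound} confine it inside a fixed $x$-coordinate region on any compact time interval, so the exhaustion can be chosen independent of $\omega$. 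There is no substantive obstacle here; the lemma is essentially a clean application of the compactness machinery once the hard work in Sections \ref{uniform_crossing_time} and \ref{convergence} has been carried out.
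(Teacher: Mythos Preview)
Your proposal is correct and follows essentially the same approach as the paper: the paper's own proof is a two-sentence sketch that invokes compactness from the uniform curvature bounds of the preceding lemmas and then defers the full details to Theorem~5.10 of \cite{recover} and Lemmas~12--13 of \cite{ACK}. You have simply written out what that sketch entails, including the coordinate bookkeeping the paper flags as a ``technicality'' but does not spell out.
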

\begin{proof}
  This Lemma follows from compactness, and the previous lemmata which tell us that the solutions are reasonable enough.  There are a couple of technicalities which come up from having to choose coordinates.  The full proof is the same as in Theorem 5.10 of \cite{recover} or Lemmas 12 and 13 of \cite{ACK}.
\end{proof}

\section{Formal Calculations}\label{formal_section}
\subsection{Formal solution in the outer region}\label{formal_outer}
As an initial approximation to the forward evolution, we may use
\begin{align*}
  v_{out} &\approx v_{0} + t \fopv(v_{0}, h_{0}), \\
  h_{out} &\approx h_{0} + t \foph(v_{0}, h_{0}).
\end{align*}

We calculate some derivatives of $v_0$ and $h_0$.  All these calculations use the simple fact that as $r \searrow 0$,
\begin{align}
  \partial_r \left( r^{p}|\log r|^q \right) = (1 + o(1))\cdot
  \begin{cases}
    pr^{p-1}|\log r|^q & p \neq 0 \\
    -qr^{p-1} |\log r|^{q-1} & p = 0
  \end{cases}
  \label{log_calculus}
\end{align}

\begin{align*}
  v_{0} &= k^2 |\log r|^{-1} & h_{0} &= \frac{1}{k^2}r^2 |\log r| \\
  v_{0,r} &= k^2 r^{-1}|\log r|^{-2}  & h_{0, r} &= \frac{1}{k^2}2r|\log r|(1 + o(1)) \\
  v_{0,rr} &= -k^2 r^{-2}|\log r|^{-2} (1 + o(1)) & h_{0, rr} &= \frac{1}{k^2}2 |\log r| (1 + o(1))
\end{align*}

Now calculate $\fopv[v_{0}, h_{0}]$ and $\foph[v_{0}, h_{0}]$.  First, the leading term in 
\begin{align*}
\fopv[v, h] = v v_{rr} - \oh v_r^2 -r^{-1}vv_r + \alpha_q^2 r^{-2}v(1-v) + \oh \alpha_q^2 r^{-1}v_r - \oh  p\frac{v}{h}\left( v \frac{h_r^2}{h}    \right)
\end{align*}
is
\begin{align}
  \alpha_q^2 r^{-2}v = \alpha_q^2 k^2 r^{-2}|\log r|^{-1} = \alpha_q^2 r^{-2}v_0(r) .
\end{align}
All other terms are $O(r^{-2}|\log r|^{-2})$ or smaller.

The leading term in 
\begin{align*}
  \foph
  &= v h_{rr} - v \frac{h_r^2}{h} + r^{-1}(q-1) h_r + r^{-1}vh_r - 2(p-1)
\end{align*}
is
\begin{align}
  \oh\alpha_q^2 r^{-1} h_r = \frac{1}{k^2}(q-1)2\log r = \alpha_q^2 r^{-2}h_0(r).
\end{align}
All other terms are $O(1)$ or smaller.

Therefore, we take our initial approximation to be,
\begin{align}
  v_{out}
  &= k^2 \frac{1}{-\log r} + t 2(q-1)k^2\frac{1}{r^2(-\log r)} \nonumber\\
  &= v_{init}(1 + \alpha_q^2tr^{-2}) \label{vout_recursive}\\
  &\\
  h_{out}
  &= \frac{1}{k^2}r^2(-\log r) + t\frac{1}{k^2}(q-1)2\log r \nonumber\\
  &= h_{init}(1 + \alpha_q^2tr^{-2}) \label{hout_recursive}
\end{align}
The similarity between \eqref{vout_recursive} and \eqref{hout_recursive} has an explanation.  The leading terms in both evolutions both come from (after tracing back coordinate changes) the reaction term in the evolution for $\phi$, i.e. the force of the $S^q$ factor trying to shrink.  It is not a surprise that this is the leading term; near the singular point the curvature of the $S^q$ factor is large and larger than that of the $S^p$ factor.

As long as $t \ll r^2$ we will be able to look at $h_{out}$ and $v_{out}$ as small perturbations of $h_0$ and $v_0$, and our calculations of $\fopv$ and $\foph$ will still be valid.  In Section \ref{formal_parabolic} we will look at the space-time region when $(r,t)$ goes to $(0,0)$ with $t \sim r^2$.

\subsection{Formal solution in the parabolic region}\label{formal_parabolic}
Our approximation in the outer region (which was a linearization in time) depends on the time derivative of $v$ and $h$ not changing too much.  The approximations $v_{out}$ and $h_{out}$ are
\begin{align}
  \label{eq:37}
  v_{out} = (1 + o(1))v_{init}, \quad h_{out} = (1 + o(1)) h_{init}
\end{align}
as long as $r^2 \ll t$. 
We use the parabolic coordinates,
\begin{align}
  \label{eq:32}
  \tau = \log t, \quad \rho = e^{-\tau/2}r, \quad \eta = e^{-\tau} h.
\end{align}
to study the region $r^2 \sim t$, i.e. $\rho \sim 1$.

In these coordinates, $v_{out}$ and $h_{out}$ are
\begin{align}
  \label{eq:36}
  v_{out} &= \frac{k^2}{|\log \rho + \oh  \tau|} (1 + \alpha_q^2\rho^{-2}),  \\
  \eta_{out}
          &= \frac{\rho^2}{k^2}|\log \rho + \oh \tau|(1 + \alpha_q^2 \rho^{-2})\\
          &= \frac{|\log \rho + \oh  \tau|}{k^2}(\rho^2 + \alpha_q^2).
\end{align}
So if we keep $\rho$ in a fixed region and send $\tau$ to $-\infty$,
\begin{align}
  \label{eq:38}
  v_{out} &= (1 + o(1)) \tau^{-1}2k^2  (1 + \alpha_q^2 \rho^{-2} ), \\
  \eta_{out} &= (1 + o(1)) \tau\frac{1}{2k^2}(\rho^2 + \alpha_q^2).
\end{align}
Inspired by this, we introduce $\tilde v$ and $\tilde \eta$ as
\begin{align}
  \label{eq:39}
  v = |\tau|^{-1} \tilde v, \qquad \eta = |\tau| \tilde \eta.
\end{align}
We hope we can find solutions where $\tilde v$ and $\tilde \eta$ stay bounded.  The evolution of $\tilde v$ and $\tilde \eta$ is
\begin{align}
  \label{eq:21}
  \partial_\tau \fixrho \tilde v
  &=   \oh \rho  \tilde v_\rho + \alpha_q^2 \rho^{-2} \tilde v + \oh \alpha_q^2 \rho^{-1} \tilde v_\rho \\
  &+ |\tau|^{-1} \left(
     \tilde v
    + \tilde v \tilde v_{\rho \rho} - \oh \tilde v_\rho^2 - \rho^{-1} \tilde v \tilde v_{\rho} - \alpha_q^2 \rho^{-1} \tilde v^ 2
    - \oh \tilde v^2 \frac{\tilde \eta_{\rho}^2}{\tilde \eta^2}
    \right),\\
  &\\
  \partial_\tau \fixrho \tilde \eta
  &=   \oh \rho \tilde \eta_\rho - \tilde \eta + \oh \alpha_q^2 \rho^{-1}\tilde \eta_\rho \\
  &+ |\tau|^{-1}\left( 
    -\tilde \eta
    + \tilde v \tilde \eta_{\rho\rho} - \tilde v \frac{\tilde \eta_\rho^2}{\tilde \eta} + \tilde v \rho^{-1}\tilde \eta_\rho
    - \alpha_p^2
    \right).
\end{align}
The functions
\begin{align}
  \label{eq:40}
  \tilde v_{para} &= 2k^2 (1 + \alpha_q^2 \rho^{-2}) \\
  \tilde \eta_{para} &= \frac{1}{2k^2} (\rho^2 + \alpha_q^2)
\end{align}
are steady-state solutions to the $\tau \to -\infty$ limit of these evolution equations:
\begin{align}
  \label{eq:41}
  0 &= \oh \rho  \tilde v_{para,\rho} + \alpha_q^2 \rho^{-2} \tilde v_{para} + \oh \alpha_q^2 \rho^{-1} \tilde v_{para,\rho} \\
  0 &= \oh \rho \tilde \eta_{para,\rho} - \tilde \eta_{para} + \oh \alpha_q^2 \rho^{-1}\tilde \eta_{para,\rho} \\
\end{align}
Therefore it is consistent to assume that the approximations
\begin{align}
  \label{eq:42}
  \tilde v &\sim \tilde v_{para} \\
  \tilde \eta &\sim \tilde \eta_{para}
\end{align}
are valid where $\rho \sim 1$, up to errors of order $|\tau|^{-1}$.  Note that for these approximations, the evolution equation for $\tilde v$ has error terms of order $|\tau|^{-1}\rho^{-2}$.  So these approximations may only be expected to work when $\zeta^2 \defeq \rho^2 |\tau|$ stays large.  In Section \ref{formal_tip} we will look at the space-time region when $(\rho, \tau)$ goes to $(0, -\infty)$ with $|\tau| \sim \rho^{-2}$.

\subsection{Formal solution in the tip region}\label{formal_tip}
Here we study the equations when $\zeta = \sqrt{|\tau|}\rho \sim 1$.  Let
\begin{align*}
  \zeta^2 = \frac{r^2}{\theta} = |\tau|\rho^2, \quad \theta = \frac{t}{-\log t}, \quad H = \frac{h}{\theta} = |\tau| \eta = |\tau|^2 \tilde \eta.
\end{align*}
Put the parabolic approximation $v_{para}$ in these coordinates.
\begin{align}
  \label{eq:43}
  v_{para} &= |\tau|^{-1} \tilde v_{para} \\
                  &= 2k^2 \alpha_q^2 \zeta^{-2} +  2k^2 |\tau|^{-1} \\
  H_{para} &= |\tau|^2 \tilde \eta_{para} \\
                  &= \frac{1}{2k^2}|\tau| \zeta^2 + \frac{\alpha_q^2}{2k^2} |\tau|^2 
\end{align}
Define $\tilde H = |\tau|^{-2} H$, which we will hope to be bounded in the tip region.
\begin{align}
  v_{para} &= 2k^2 \alpha_q^2 \zeta^{-2} + 2k^2 |\tau|^{-1}, \label{tildev_para}\\
  \tilde H_{para} &= \frac{\alpha_q^2}{2k^2} + \frac{\zeta^2}{2k^2} |\tau|^{-1}. \label{tildeH_para}
\end{align}

Then we can calculate the evolutions in terms of $\zeta$, $\tilde H$, and $v$.  Using the chain rule,
\begin{align}
  \partial_t \fixr v
  &= (\partial_\zeta v)\partial_t \fixr \zeta + \partial_t \fixz v \\
  &= (- \oh \theta^{-3/2} \theta_t r) \partial_\zeta v + \tau_t \partial_\tau \fixz v \\
  &= - \oh \theta^{-1}\theta_t \zeta \partial_\zeta v + \theta^{-1}|\tau|^{-1} \partial_\tau \fixz v
\end{align}
Then, calculating $\theta_t$ we find
\begin{align}
  \partial_t \fixr v
  &= \theta^{-1}|\tau|^{-1} \left( - \oh \zeta \partial_\zeta v +  \partial_\tau \fixz v \right)
  + \theta^{-1}|\tau|^{-2}\left[  \oh \zeta \partial_\zeta v \right].
\end{align}
For this section we follow a convention of using square brackets for terms which will always be negligible.  Annoying negligible terms will come from the error in the approximations $1/\theta_t \approx |\log \theta| \approx |\tau|$.  Similarly calculate for $h$, and then replace it with $\theta \tau^2 \tilde H$.
\begin{align}
  \partial_t \fixr h
  &= \theta^{-1}\left(- \oh  \theta_t \zeta \partial_\zeta h +  |\tau|^{-1} \partial_\tau \fixz h  \right)\\
  &=\theta^{-1}
    \left(
    - \oh \theta_t \zeta \partial_\zeta (\theta \tau^2 \tilde H)
     + |\tau|^{-1}\partial_\tau \fixz (\theta \tau^2 \tilde H)
     \right)
     \\
  &= 
    \left(
    - \oh ( \theta_t \tau^2) \zeta \partial_\zeta \tilde H
    + \left( \theta^{-1}\theta_\tau \tau  + 2   \right)\tilde H
    +  \tau \partial_\tau \tilde H
    \right)\\
  &= |\tau|
    \left(
    - \oh \zeta \partial_\zeta \tilde H
    + \tilde H
    + \partial_\tau \tilde H
    \right)
    +
    \left[
    - \oh \zeta \partial_\zeta \tilde H
    + \tilde H
    \right]
\end{align}
The right-hand sides scale as follows.
\begin{align}
  \fopv[v, h]
  &= \theta^{-1} \fopv_\zeta [v. H] \label{fopv_scaling_H}\\
  &= \theta^{-1} \fopv_\zeta [v, \tilde H]\label{fopv_scaling_tildeH}
\end{align}
\begin{align}
  \foph_\zeta[v,h]
  &= \foph_\zeta[v, H] \label{foph_scaling_H}\\
  &= |\tau|^2 \left(
    v \tilde H_{\zeta \zeta} - v \frac{\tilde H_\zeta^2}{\tilde H} + \zeta^{-1}\oh \alpha_q^2 \tilde H_\zeta + \zeta^{-1} v \tilde H_\zeta
    -|\tau|^{-2}\alpha_p^2
    \right) \label{foph_scaling_tildeH}
\end{align}

Here, by $\foph_\zeta$ and $\fopv_{\zeta}$ we mean the operators with $r$ and $\partial_r$ replaced with $\zeta$ and $\partial_\zeta$.  The scaling in
\eqref{fopv_scaling_H}, \eqref{fopv_scaling_tildeH}, \eqref{foph_scaling_H}, \eqref{foph_scaling_tildeH}
are simple, but we can also understand them as follows.  The scaling
\begin{align}
  (r, v, h) \to (\zeta, v, H)  
\end{align}
is a vanilla scaling of the metric by a factor of $\theta^{-1}$.  The quantity $v$ is scale invariant, and $h$ scales in the same way as the metric.  The fact that time scales like the metric under Ricci flow (so time derivatives scale like one on the metric) explains \eqref{fopv_scaling_H} and \eqref{foph_scaling_H}.  On the other hand, the scaling
\begin{align}
  (\zeta, v, H) \to (\zeta, v, \tilde H)
\end{align}
is not a straightforward scaling of the metric.  What's happening is that the $S^p$ factor is much larger than the $\real \times S^q$ part, and is trying to become an $\real^p$ (as we go backwards in time).  A nice way to think of this scaling by shifting the $\tau^2$ factor to the radius of the $S^p$.  In other words, consider that
\begin{align}
  \frac{1}{v}d\zeta^2 + H g_{S^p} + \zeta^2 g_{S^q} = \frac{1}{v} d\zeta^2 + \tilde H \left( \tau^2 g_{S^p}\right) + \zeta^2 g_{S^q}
\end{align}
So $\tilde H$ corresponds to considering the $S^p$ factor to have a radius of $|\tau|$.  This explains \eqref{foph_scaling_tildeH}; as the radius gets large, the Ricci curvature gets small compared to the metric so the reaction term shrinks.

So performing a cancellation, the evolutions are as follows:
\begin{align}
  |\tau|^{-1} \left(-\oh  \zeta \partial_\zeta v +  \partial_\theta \fixz v \right)
  ={}
  & \lop[v] + \qop[v,v] - 2p |\tau|^{-2}\frac{v}{\tilde H} (\psi_s)^2 \label{evo_v_tip}\\
  &-|\tau|^{-2}\left[  \oh \zeta \partial_\zeta v \right]
\end{align}
\begin{align}
  |\tau|^{-1}
  \left(
  - \oh  \zeta \partial_\zeta \tilde H
  +  \tilde H
  + \partial_\theta \fixz \tilde H
  \right)
  ={}
  & \rop[v, \tilde H] - v \frac{\tilde H_\zeta^2}{\tilde H} - \tau^{-2}\alpha_p^2 \label{evo_H_tip} \\
  &-
  |\tau|^{-2}
  \left[
  - \oh \zeta \partial_\zeta \tilde H
  + \tilde H
  \right] 
\end{align}
Where we have defined,
\begin{align*}
  \lop[v] &= \alpha_q^2\zeta^{-2}v + \oh\alpha_q^2 r^{-1}v_r \\
  \qop[v,w] &= \oh (vw_{\zeta\zeta} + wv_{\zeta\zeta}) - \oh v_rw_r - \zeta^{-1}(vw_\zeta + wv_\zeta) - \alpha_q^2 \zeta^{-2} vw \\
  \rop[v, \tilde H] &= v \tilde H_{\zeta\zeta} + \zeta^{-1}\left( \oh \alpha_q^2 + v \right) \tilde H_v
\end{align*}

Assume that $v$ and $\tilde H$ are bounded (in $C^2$) in regions where $\zeta$ is bounded, up to time $0$.  Then taking the $\tau \searrow -\infty$ limit
of the equations above:
\begin{align}
  0 &= \fop_{v, \zeta}[v_0, \tilde H_0] \\
  0 &= \rop[v, \tilde H] - v_0 \frac{\tilde H_\zeta^2}{\tilde H}
\end{align}
These are the equations for a steady soliton of the form
\begin{align*}
  \frac{1}{v_0} d\zeta^2 + \tilde H_0 g_{\real^p} + \zeta^2 g_{S^q},
\end{align*}
which one can see by realizing that these right-hand-sides can be obtained by setting the Ricci curvature of the $S^p$ factor to be zero.  In order to match the value of $\tilde H$ in the outer region at $\theta = 0$ \eqref{tildeH_para}, $\tilde H_0$ should approach the constant $\frac{\alpha_q^2}{2k^2}$ at infinity.  By Lemma \ref{eternal_constant}, $\tilde H_0$ is constant.  Then, we find that the steady soliton is $(\text{a }(q+1)\text{-dimensional Bryant Soliton}) \times (\text{flat } \real^P)$.  We find it convenient to introduce $c_0^2 = \frac{1}{2\alpha_q^2k^2}$ here, so $\tilde H_0$ approaches $\alpha_q^4 k_0$.

For any $c$, a Bryant soliton is given by
\begin{align}
  \label{eq:50}
  v(\zeta) = \bry\left(  c\zeta  \right)
\end{align}
where $\bry:[0, \infty) \to (0,1]$ is a fixed function.  This has the asymptotics
\begin{align}
  \label{eq:52}
  \bry\left(c\zeta \right) \sim c^{-2}\zeta^{-2} \text{ at infinity},
  \qquad  \bry\left( c\zeta \right) \sim 1 - b_0^2c^2 \zeta^2 \text{ at zero, for some }b_0 > 0
\end{align}
By matching the parabolic approximation \eqref{tildev_para}, \eqref{tildeH_para} as $\zeta \to \infty$, we guess
\begin{align}
  \label{eq:49}
  v_0(\zeta) = \bry\left(c\zeta \right) , \quad \tilde H_0(\zeta) = \alpha_q^4 c, \qquad \text{with } c = k_0.
\end{align}

We look for the next order terms, which are not constant in time.  The evolution equations \eqref{evo_v_tip}, \eqref{evo_H_tip} will have terms of order $|\tau|^{-1}$.  This is also consistent with the parabolic approximation \eqref{tildev_para},\eqref{tildeH_para}.  Write $v = v_0 + |\tau|^{-1}v_1 + o(|\tau|^{-1})$ and $\tilde H = \tilde H_0 + |\tau|^{-1}H_1 + o(|\tau|^{-1})$, insert into \eqref{evo_v_tip}, \eqref{evo_H_tip}, and find that $v_1$ and $\tilde H_1$ must satisfy,
\begin{align}
  - \oh \zeta \partial_\zeta v_0
  &= 2\qop[v_0, v_1] + \lop[v_1] \label{eqn_for_v1}\\
  &\\
  - \oh \zeta \partial_\zeta \tilde H_0 + \tilde H_0
  &= \rop[v_0, \tilde H_1] \label{eqn_for_H1}
\end{align}
Note that the coupled term $2p \frac{v}{h} \psi_s^2$ does not appear in the first equation because $H = O(|\tau|^{2})$.  We find the following about the solutions:

\begin{lemma}
  The equation \eqref{eqn_for_v1} with $v_0(\zeta) = \bry(\zeta)$ has a one-parameter family of strictly positive solutions which vanish at zero.  Let $\cry$ be any of them.  The function $\cry$ satisfies
  \begin{align}
     \cry(\zeta) = \frac{1}{\alpha_q^2} + o(1) \text{ at infinity },\qquad \cry(\zeta) = O(\zeta^2) \text{ at zero.}
  \end{align}
  Let $c$ be given.  Then a solution to \eqref{eqn_for_v1} with $v_0(\zeta) = \bry(c\zeta)$ is given by $\cry(\zeta) = c^{-2}\cry(c\zeta)$ and has asymptotics
  \begin{align}
    c^{-2}\cry(c\zeta) = \frac{1}{c^2\alpha_q^2} + o(1) \text{ at infinity },\qquad \cry(\zeta) = O(\zeta^2) \text{ at zero.}
  \end{align}

  Now consider the equation \eqref{eqn_for_H1} with $v_0 = \bry(\zeta)$, $\tilde H_{0} = \alpha_q^4$, and the boundary condition that $\tilde H_1$ and its derivative vanish at zero.  This has one solution $\ary(\zeta)$ which has the asymptotics
  \begin{align*}
    \ary(\zeta) = \alpha_q^2\zeta^2 + o(\zeta^2) \text{ at infinity },\qquad \ary(\zeta) = O(\zeta^2) \text{ at zero.}
  \end{align*}
  The solution to \eqref{eqn_for_H1} with $v_0 = \bry(c\zeta)$,  $\tilde H_{0} = c^2 \alpha_q^4$, and the same boundary conditions is given by $\tilde H_{1} = \ary(c\zeta)$ which has asymptotics
  \begin{align*}
    \ary(c\zeta) = \alpha_q^2c^2\zeta^2 + o(\zeta^2) \text{ at infinity },\qquad \ary(\zeta) = O(\zeta^2) \text{ at zero.}
  \end{align*}
\end{lemma}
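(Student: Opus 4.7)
The plan is to analyze both ODEs directly, treating the base case $c = 1$ first and recovering the general $c$-dependent versions by a scaling argument at the end. For \eqref{eqn_for_v1}, write $L[w] := 2\qop[v_0, w] + \lop[w]$ for the linear operator acting on $v_1$. The key observation is that $L$ is the linearization at $v_0 = \bry$ of the steady tip-soliton equation $\lop[v] + \qop[v, v] = 0$; because $\lop$ and $\qop$ scale homogeneously under $\zeta \to c\zeta$, the rescaled profile $\bry(c\zeta)$ also solves that soliton equation, so differentiating in $c$ at $c = 1$ produces an explicit homogeneous solution $y_1(\zeta) := \zeta \bry'(\zeta)$. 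From $\bry(\zeta) = 1 - b_0^2 \zeta^2 + O(\zeta^4)$ and $\bry(\zeta) \sim \zeta^{-2}$ at infinity, $y_1 = -2 b_0^2 \zeta^2 + O(\zeta^4)$ near zero and $y_1 \sim -2\zeta^{-2}$ at infinity. A Frobenius analysis at $\zeta = 0$ identifies the indicial roots of $L$ as $\lambda_+ = 2$ and $\lambda_- = 1 - q$; a second linearly independent homogeneous solution $y_2$ obtained by reduction of order is singular at the tip, behaving like $\zeta^{1-q}$, and a WKB analysis shows the kernel of $L$ at infinity is spanned by $y_1 \sim \zeta^{-2}$ and a Gaussian-decaying mode $\sim \exp(-\alpha_q^2 \zeta^2/4)$. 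The particular solution $\cry_0$ is obtained by variation of parameters; its tip behavior follows from the direct ansatz $\cry_0 = K\zeta^2 + O(\zeta^4)$ (the potential resonance with the indicial mode $\zeta^2$ is cleanly resolved by the $O(\zeta^2)$-corrections to the coefficients of $L$ coming from $v_0 \not\equiv 1$, so no $\zeta^2 \log \zeta$ term arises), while at infinity the dominant balance $\alpha_q^2 \zeta^{-2} \cry_0 \sim \zeta^{-2}$ forces $\cry_0 \to 1/\alpha_q^2$. The full one-parameter family is $\cry = \cry_0 + c\, y_1$; this is one-parameter because $y_1$ satisfies both asymptotic conditions, and strict positivity on $(0, \infty)$ is achieved on a half-line of $c$ values since $y_1 < 0$ throughout.

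For \eqref{eqn_for_H1}, the operator $\rop[v_0, \cdot]$ depends only on the first and second derivatives of $\tilde H_1$, so $w := \partial_\zeta \tilde H_1$ satisfies the first-order linear ODE $v_0 w' + \zeta^{-1}(\oh\alpha_q^2 + v_0)\, w = \alpha_q^4$, where the constant right-hand side is the full LHS of \eqref{eqn_for_H1} because $\tilde H_0$ is constant. Solve by integrating factor $\mu(\zeta) = \exp \int \tfrac{\alpha_q^2/2 + v_0}{\zeta v_0}\, d\zeta$; using $v_0 \to 1$ at zero and $v_0 \sim \zeta^{-2}$ at infinity, $\mu \sim \zeta^{\alpha_q^2/2 + 1}$ near the tip and $\mu \sim \exp(\alpha_q^2 \zeta^2/4)$ at infinity. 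Selecting the unique solution with $w(0) = 0$ gives $w = O(\zeta)$ near zero and, by a Laplace-type asymptotic evaluation of $\int_0^\zeta \mu\, \alpha_q^4/v_0\, ds$ divided by $\mu(\zeta)$, $w \sim 2\alpha_q^2 \zeta$ at infinity. Integrating from zero produces $\ary(\zeta) = O(\zeta^2)$ at the tip with $\ary(0) = \ary_\zeta(0) = 0$, and $\ary(\zeta) = \alpha_q^2 \zeta^2 + o(\zeta^2)$ at infinity. The scaling claims for both $\cry$ and $\ary$ follow by direct substitution, exploiting that $\lop$, $\qop$, and $\rop$ transform homogeneously under $\zeta \to c\zeta$ paired with the indicated rescalings of the dependent variables.

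The main obstacle I anticipate is the global construction of the particular solution $\cry_0$: although the local behaviors of $y_1$, $y_2$, and the forcing at both ends are explicit, pinning down a specific particular solution with the correct $O(\zeta^2)$ tip behavior and the exact infinity asymptote $1/\alpha_q^2$ requires quantitative control on the Wronskian and the variation-of-parameters integrals throughout $(0, \infty)$, and establishing strict positivity of the resulting $\cry$ needs estimates going beyond pure asymptotic matching. A clean alternative is to construct $\cry_0$ by a shooting argument from infinity, prescribing $\cry_0 \to 1/\alpha_q^2$ there and then checking (via a monotonicity or energy identity for $L$) that the resulting solution meets the $\zeta^2$ indicial mode rather than the $\zeta^{1-q}$ one at the tip.
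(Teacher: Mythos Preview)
Your treatment of \eqref{eqn_for_H1} matches the paper's exactly: both reduce to a first-order linear equation for $w=\partial_\zeta\tilde H_1$, solve via the integrating factor $\exp\int \tfrac{\alpha_q^2/2+v_0}{\zeta v_0}\,d\zeta$, and read off the asymptotics from those of $\bry$. The paper writes the result as a double integral for $\ary$ rather than discussing $w$ separately, but the computation is identical.

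For \eqref{eqn_for_v1} the paper takes a shortcut you do not: since the coupling term $2p\tfrac{v}{h}(\psi_s)^2$ drops out at this order, the equation for $v_1$ is literally the singly-warped equation treated in Lemma~4 of \cite{ACK}, and the paper simply invokes that reference (after noting a factor-of-two discrepancy in conventions for $\theta$). Your direct attack---using the scaling symmetry to produce the kernel element $y_1=\zeta\bry'(\zeta)$, computing the indicial exponents $\{2,\,1-q\}$ at the tip, and building a particular solution by variation of parameters---is essentially a reconstruction of that cited lemma from scratch. The structural ingredients you list are correct and, carried through, would yield a self-contained proof. The step you flag as the ``main obstacle'' (producing a strictly positive particular solution with the prescribed endpoint behaviors, rather than just matching asymptotics) is genuine and is exactly the work \cite{ACK} does; your proposed shooting-from-infinity alternative is also the standard way to close it. So your route is sound but longer; the paper's route is a citation.
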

\begin{proof}
  The equation for $v_1$ \eqref{eqn_for_v1} is essentially same as appears in Lemma 4 of \cite{ACK}.  This is because the coupling term makes no appearance.  There are two things we must note.  One is that the $(n-1)$ in the asmptotics in Lemma 4 of \cite{ACK} should read $\frac{1}{n-1}$, which corresponds to $\frac{2}{\alpha_q^2}$ for us.  The other difference is that we have put $\theta = \frac{t}{-\log t}$ rather than its square root; this leads to a factor of two difference in our equation \eqref{eqn_for_H1} compared to (4.22) of \cite{ACK}.  This accounts for the asymptotics $\frac{1}{\alpha_q^2}$ at infinity for $\cry(\zeta)$.

  The equation for $\tilde H_1$ \eqref{eqn_for_H1} can be solved semi-explicitly.  With $\tilde H_{0} = \alpha_q^4$ and $v_0 = \bry(\zeta)$ the equation is
  \begin{align*}
    \partial_\zeta^2\tilde H_{1} + \frac{\oh \alpha_q^2  + \bry(\zeta)}{\zeta \bry(\zeta)} \partial_\zeta \tilde H_1
    = \frac{\alpha_q^4}{\bry}.
  \end{align*}
Set $Q(\zeta) = \int_1^\zeta \frac{\oh \alpha_q^2 + \bry(z)}{z \bry(\zeta)} dz$, then a solution is 
\begin{align}
  \label{eq:47}
  \ary(\zeta) = \alpha_q^4\int_0^\zeta \left( e^{ -Q(w)} \int_0^w \frac{e^{Q(z)}}{\bry(z)}dz \right) dw.
\end{align}
Using the known asymptotics for $\bry(\zeta)$ one can check that the integrals are well defined and find the asymptotics of $\ary$.  The case for $\tilde H_0(\zeta) = c\alpha_q^4$ and $v_0(\zeta) = \bry(c\zeta)$ is a straightforward scaling.
\end{proof}

Finally, we see that with $c_0 = \frac{1}{\sqrt{2}\alpha_q k}$ the functions
\begin{align*}
  v(\zeta) &= \bry(c_0 \zeta) + c_0^{-2}|\tau|^{-1} \cry(c_0\zeta)  \\
  \tilde H_1(\zeta) &= c_0^2 \alpha_q^4 + |\tau|^{-1}\ary(c_0\zeta)
\end{align*}
solve \eqref{evo_v_tip},\eqref{evo_H_tip} up to order $|\tau|^{-2}$.

\section{Barriers}\label{barriers_section}
In this section we use the results of Section \ref{formal_section} to construct barriers according to Definition \ref{barrier_def}, proving Lemma \ref{barriers_full}.

An important observation from our formal calculations is that the size of $\psi_s^2 = 4\frac{v h_r^2}{h}$ is not very important.  It is only important that we have the bound $\psi_s^2 \leq 1$.  Therefore, for creating barriers we can switch to the notation \eqref{gopv_def}, \eqref{goph_def}.  This is in contrast to $v = \phi_s^2$; it is important that this becomes 1 near the tip, as this makes the evolution equation in the $r$ coordinate strictly parabolic.  

For analysis it is convenient to look at $\gopv$ and $\goph$ as
\begin{align}
  \label{eq:97}
  r^2 \gopv(v, h, u)
  &= r^2 \lop (v) + r^2 \qop (v, v) - 2 p u v \frac{r^2}{h} \\
  r^2 \goph(v, h, u)
  &= \rop(v, h) - u - \alpha_p^2
\end{align}
where
\begin{align}
  \label{eq:104}
  r^2 \lop (v) &= \oh \alpha_q^2 r \partial_r v + \alpha_q^2 v \\
  r^2 \qop (v, v) &= v (r^2 \partial^2_r v) - \oh r^2 (\partial_r v )^2 - v (r\partial_r v) - \alpha_q^2 v^2  \\
  r^2 \rop (v, h) &= v (r^2 \partial^2_r h) + (\oh \alpha_q^2 + v)(r \partial_r h)
\end{align}

For a function $f$ of $r$ let
\begin{align}
  \label{eq:71}
  |f|_{2,r}(r) = |f(r)| + |r\partial_r f(r)| + |r^2\partial_r^2 f(r)|.
\end{align}
For many functions we consider we will have
\begin{align}
  |f|_{2, r}(r) = O_{r \to 0}(|f|(r)) \quad \text{and} \quad |f|_{2, r}(r) = O_{r \to \infty}(|f|(r)).
\end{align}

Notice that for any $u$ fixed, $r^2\gopv(v, h, u)$ is a quadratic polynomial of
\begin{align}
  \label{eq:91}
  v, \quad r\partial_r v, \quad r^2 \partial_r v, \quad \frac{r^2}{h};
\end{align}
and $r^2 \goph(v, h, u)$ is a quadratic polynomial of
\begin{align}
  \label{eq:93}
  h, \quad r\partial_r h, \quad r^2 \partial_r^2 h, \quad v, \quad \frac{r^2}{h}.
\end{align}

We will use that, for any $C_1$, there is a constant $C_2$ so that for all $v_2$ and $h_2$ with 
\begin{align}
  \label{eq:100}
  |v_1|_{2, r} + |v_2|_{2,r} \leq C_1  \quad |h_1|_{2, r} + |h_2|_{2, r} \leq C_1 \\
  \frac{r^2}{|h_1|} + \frac{r^2}{|h_2|} \leq C_1,
\end{align}
we have the pointwise bounds
\begin{align}
  \label{out_fopv_bnd}
  |r^2\gopv(v_2, h_2, u_2) - r^2\gopv(v_1, h_1, u_1)| \leq C_2 |v_2 - v_1|_{2, r}
\end{align}
and
\begin{align}
  \label{out_foph_bnd}
  |r^2\goph(v_2, h_2, u_2) - r^2\goph(v_1, h_1, u_1)| \leq C_2 |h_2 - h_1|_{2, r}.
\end{align}
If we keep $u_i \in [0, 1]$ then $C_2$ is independent of $u_1, u_2$.

\subsection{Barriers in the outer region}
Our outer approximation is a simple linearization in time:
\begin{align}
  \label{eq:105}
  v_{out} \approx v_0 + t \fopv(v_0, h_0), \quad h_{out} \approx h_0 + t \foph (v_0, h_0)
\end{align}
Therefore creating barriers boils down to checking how long the right hand side of the evolution equation does not change too much.  Then since both of the right-hand sides are positive, one can add a term of the form $\pm \epsilon \fop(v_0, h_0)$ to the approximation to get a barrier.
\begin{lemma}\label{outer_barriers}
  Let $\delta, \epsilon > 0$ be given and sufficiently small, and let
  \begin{align}
    \label{eq:60}
    a^2_{\pm} = (1 \pm \delta)k^2, \quad b^2_{\pm} = (1 \pm \delta) k^{-2}, \quad d_\pm = (1 \pm \epsilon).
  \end{align}
  Let
  \begin{align}
    \label{eq:24}
    v_{out}^{(\pm)}(r, t) &= a^2_\pm|\log r|^{-1}\left( 1 + d_\pm  \alpha_q^2 tr^{-2}\right), \\
    h_{out}^{(\pm)}(r, t) &= b^2_\pm r^2 |\log r|\left( 1 + d_\pm \alpha_q^2 tr^{-2} \right).
  \end{align}
  Then there are $\rho_1, r_*$ such that $v_{out}^+$, $v_{out}^-$, $h_{out}^+$, $h_{out}^-$ are barriers for $r \in [\sqrt{t}\rho_*, r_*]$ (and $t<\frac{r_*}{\rho_*}$).

Furthermore, if $\delta$ and $\epsilon$ are uniformly small,  there is $A_*(k, p, q)$ so that $\rho_1 > \epsilon^{-1/2}A_*$ suffices.
\end{lemma}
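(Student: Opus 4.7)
The plan is to verify the three conditions of Definition~\ref{barrier_def} in turn. Proper ordering $v^-_{out} < v^+_{out}$ and $h^-_{out} < h^+_{out}$ on the stated region is immediate once $\delta, \epsilon > 0$: the prefactors satisfy $a^2_- < a^2_+$, $b^2_- < b^2_+$, and $d_- < d_+$, while $1 + d_\pm \alpha_q^2 t r^{-2} > 0$ for $t, r \ge 0$. Since the region is strictly away from $r=0$, the boundary clause of the definition does not apply. The substance is therefore the sub/supersolution inequalities \eqref{subsuper_v_ineq}, \eqref{subsuper_h_ineq}.

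The core argument reproduces the leading-order cancellation from Section~\ref{formal_outer}. A direct differentiation gives $\partial_t v^\pm_{out} = a^2_\pm d_\pm \alpha_q^2 r^{-2} |\log r|^{-1}$. As $r \searrow 0$, the dominant term of $\gopv(v^\pm_{out}, h^*, u^*)$ is the reaction term $\alpha_q^2 r^{-2} v^\pm_{out}$, which to leading order equals $a^2_\pm \alpha_q^2 r^{-2} |\log r|^{-1}$. Taking the difference, the leading part of $\gopv(v^\pm_{out}, h^*, u^*) - \partial_t v^\pm_{out}$ is $(1 - d_\pm) a^2_\pm \alpha_q^2 r^{-2} |\log r|^{-1}$. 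Since $1 - d_+ = -\epsilon < 0$ and $1 - d_- = \epsilon > 0$, this produces exactly the slack needed to make $v^+_{out}$ a supersolution and $v^-_{out}$ a subsolution. The analogous computation for $h^\pm_{out}$ identifies the leading term of $\goph(v^*, h^\pm_{out}, u^*)$ as $(q-1) r^{-1} h^\pm_{out, r} \approx \alpha_q^2 b^2_\pm |\log r|$, which cancels $\partial_t h^\pm_{out} = d_\pm \alpha_q^2 b^2_\pm |\log r|$ up to the same $\pm \epsilon$ slack.

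What remains is to check that all remaining error terms can be absorbed into this $\epsilon$-sized slack. Using the pointwise comparison estimates \eqref{out_fopv_bnd}, \eqref{out_foph_bnd} and the asymptotics of $v_0, h_0$, I will show that each of (i) the $v^\pm v^\pm_{rr}$, $(v^\pm_r)^2$, $r^{-1} v^\pm v^\pm_r$ contributions (and their $h$-analogues via $\rop$), (ii) the corrections produced by differentiating the $d_\pm \alpha_q^2 t r^{-2}$ factor in $v^\pm_{out}, h^\pm_{out}$, and (iii) the coupling term $-2 p u^* v^\pm_{out}/h^*$, all contribute relative errors of size $O(|\log r|^{-1} + \rho^{-2})$. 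For (iii) the gradient bound $u^* \in [0,1]$ together with the lower bound $h^* \ge h^-_{out} \ge c \, r^2 |\log r|$ on the barrier region gives $|2 p u^* v^\pm_{out}/h^*| \le C r^{-2} |\log r|^{-2}$, smaller than the slack by a factor $|\log r|^{-1}$. Hence, taking $|\log r_*|^{-1}$ small and restricting to $\rho \ge A_* \epsilon^{-1/2}$ with $A_*$ determined by the coefficient of $\rho^{-2}$ in the combined error, all errors are dominated and the inequalities hold with room to spare.

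The main obstacle is bookkeeping: the time-correction factor $d_\pm \alpha_q^2 t r^{-2}$ enters $v^\pm_{out}, h^\pm_{out}$ multiplicatively, then propagates quadratically through the $\qop$ and $\rop$ pieces and interacts with spatial derivatives (note $r \partial_r (t r^{-2}) = -2 t r^{-2}$), so one must verify that these compounded corrections do not consume the $\epsilon$ slack. The saving grace is that every compounded correction carries at least one extra factor of $\rho^{-2}$ or $|\log r|^{-1}$ beyond what $\epsilon$ contributes, so the verification is a purely algebraic comparison once $\rho$ is large enough — which is exactly how the quantitative threshold $\rho_1 > A_* \epsilon^{-1/2}$ arises.
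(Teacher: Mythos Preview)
Your proposal is correct and follows essentially the same route as the paper: identify $\partial_t v^\pm_{out} = d_\pm \alpha_q^2 r^{-2} v_0$, recognize that $r^2\gopv(v_0,h_0,u) = (1+O(|\log r|^{-1}))\alpha_q^2 v_0$, and then show that replacing $(v_0,h_0)$ by $(v^\pm_{out},h^*)$ costs only $O(\rho^{-2})$ via \eqref{out_fopv_bnd}--\eqref{out_foph_bnd}, leaving a net sign of $\pm\epsilon + O(|\log r|^{-1}) + O(\rho^{-2})$. The only presentational difference is that the paper bundles your items (i)--(iii) into a single invocation of the Lipschitz bound \eqref{out_fopv_bnd} together with the estimate $|v^\pm_{out}-v_0|_{2,r}\le C\rho^{-2}v_0$, rather than itemizing the error sources; your explicit treatment of the coupling term (iii) via $h^*\ge h^-_{out}$ is exactly what that Lipschitz bound encodes.
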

\begin{proof}

  Here let $v_0 = a^2_\pm |\log r|^{-1}$ and $h_0 = b^2_\pm r^2 |\log r|$.  Check that (mostly by \eqref{log_calculus})
  \begin{align}
    \label{out_diff_bnds}
    |v_{out}^\pm-v_0|_{2, r}\leq C \rho^{-2}v_0.
  \end{align}
  and
  \begin{align}
    \label{eq:98}
    |h_{out}^\pm-h_0|_{2, r} \leq C \rho^{-2}h_0
  \end{align}
  Therefore the bounds \eqref{out_fopv_bnd} and \eqref{out_foph_bnd} tell us that for any $h^*$ between $h_{out}^{\pm}$ and $v^*$ between $v_{out}^\pm$ we have
  \begin{align}
    \left| r^2\gopv(v_0, h_0, u) - r^2\gopv(v_{out}^{\pm}, h^*, u) \right| \leq C \rho^{-2} v_0, \label{v0_fopdiff_bnd}\\
    \left| r^2\goph(v_0, h_0, u) - r^2\goph(v^*, h_{out}^{\pm}, u) \right| \leq C \rho^{-2} h_0. \label{h0_fopdiff_bnd}\
  \end{align}  
  Our formal calculation showed that for all $u \in [0,1]$,
  \begin{align}
    r^2 \gopv(v_0, h_0, u) = (1 + O(|\log r|^{-1})) \alpha_q^2 v_0, \label{formal_calc_v0}\\
    \quad
    r^2 \goph(v_0, h_0, u) = (1 + O(|\log r|^{-1})) \alpha_q^2 h_0. \label{formal_calc_h0}
  \end{align}

  We show that $\gopv(v^\pm_{out}, h^*, u)$ has the correct sign, i.e. that we have \eqref{subsuper_v_ineq}. Calculate,
  \begin{align}
    \label{eq:99}
    r^2\left( \partial_t v_{out} - \gopv(v_{out}, h^*, u) \right)
    &= \left( (1 \pm \epsilon)\alpha_q^2 v_0  - r^2 \gopv(v_0, h_0, u) \right)\\
    &+ \left( r^2\gopv(v_0, h_0, u) - r^2\gopv(v_{out}, h^*, u) \right)\\
  \end{align}
  So by \eqref{v0_fopdiff_bnd} and \eqref{formal_calc_v0},
  \begin{align}
    \label{eq:109}
    r^2\left( \partial_t v_{out} - \gopv(v_{out}^\pm, h^*, u) \right)
    &= \left(\pm \epsilon + O_{r \to 0}(|\log r|^{-1}) + O_{ \rho \to \infty}(\rho^{-2}) \right) \alpha_q^2 v_0
  \end{align}
  Therefore we can choose $\rho_1$ large enough and $r_*$ small enough so that the $\pm \epsilon$ term dominates above.  Showing that $\goph(v_{out}^*, h^\pm_{out}, u)$ has the correct sign is a similar calculation using \eqref{h0_fopdiff_bnd} and \eqref{formal_calc_h0}.
  
\end{proof}

\subsection{Barriers in the parabolic region}
Recall our parabolic coordinates.
\begin{align}
  \rho = \frac{r}{\sqrt t}, \quad \tau = \log t , \quad \eta = \frac{h}{r^2}
\end{align}
Our formal calculations show that in the parabolic region the functions
\begin{align}
  \tilde v = |\tau|^{-1} v, \quad \tilde \eta = |\tau| \eta 
\end{align}
are approximated by
\begin{align}
  \tilde v_{para} = 2k^2 (1 + \alpha_q^2 \rho^{-2}), \quad \tilde \eta_{para} = \frac{1}{2k^2}(\rho^2 + \alpha_q^2).
\end{align}

In terms of the parabolic coordinates, $\dopv$ and $\doph$ take the form
\begin{align}
  |\tau| e^{\tau} \left(\partial_t v - \gopv(v, h, u) \right)
  &= \partial_\tau \fixrho \tilde v - \oh \rho \partial_\rho \tilde v - \lop_\rho(\tilde v) \label{dopv_para_lin}\\
  &+ |\tau|^{-1}
    \left(
    - \tilde v
    -\qop_\rho (\tilde v, \tilde v)
    + 2 p u  \frac{\tilde v}{\tilde \eta}
    \right),\label{dopv_para_quad}\\
  &\\
  |\tau|^{-1}e^{\tau} \left( \partial_t h - \goph(v, h, u) \right)
  &= \partial_\tau \fixrho \tilde \eta - \oh \rho \tilde \eta_\rho + \tilde \eta - \oh \alpha_q^2 \rho^{-1}\tilde \eta_\rho
    \label{doph_para_lin}
  \\
  &+ |\tau|^{-1}
    \left( 
    \tilde \eta
    - \tilde v \tilde \eta_{\rho\rho}  - \tilde v \rho^{-1}\tilde \eta_\rho
    + \alpha_p^2 + u
    \right) . \label{doph_para_quad}
\end{align}

\begin{lemma}\label{para_barriers}
  Let $\rho_2$ be given.  Also let $\hat a_{\pm}$ and $\hat b_{\pm}$ be given satisfying
  \begin{align}
    (1-1/2)k^2 < \hat a_-^2 \leq k^2 \leq \hat a_+^2 < (1+1/2)^2, \qquad     (1-1/2)k^{-2} < \hat b_-^2 \leq k^{-2} \leq \hat b_+^2 < (1+1/2)^{-2}.
  \end{align}
  There is a $\tau_*$, $\zeta_1$, and $D$ so that
  \begin{align}
    \label{eq:23}
    \tilde v_{para}^{(\pm)} &= 2{\hat a^2}_\pm \left( 1 + \alpha_q^2\rho^{-2} \right)  \pm D  |\tau|^{-1}\rho^{-4} \\
    \tilde \eta_{para}^{(\pm)} &= \oh {\hat b^2}_\pm  \rho^2 \left( 1 + \alpha_q^2\rho^{-2} \right)  \pm D|\tau|^{-1}\rho^{-2}  .
  \end{align}
  define barriers for $\tau < \tau_*$ and $\rho \in \left[\frac{\zeta_1}{\sqrt{|\tau|}}, \rho_2 \right]$.

  Furthermore, $\zeta_1 \geq \sqrt{D}$ is sufficiently large.
\end{lemma}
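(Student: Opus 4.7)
My plan is to verify each clause of Definition \ref{barrier_def} directly in parabolic coordinates, using the decomposition of the operators given in \eqref{dopv_para_lin}--\eqref{doph_para_quad}. The smoothness condition at $r=0$ is vacuous, because the parabolic region stays strictly away from $\rho=0$; only ordering and the sub/super-solution inequalities need checking.

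Ordering is immediate: $\tilde v^{(+)}_{para} - \tilde v^{(-)}_{para} = 2(\hat a_+^2-\hat a_-^2)(1+\alpha_q^2\rho^{-2}) + 2D|\tau|^{-1}\rho^{-4}$ is strictly positive because the second summand is, and the analogous calculation works for $\tilde\eta^{(\pm)}_{para}$. For the sub/super-solution property of $\tilde v^{(\pm)}_{para}$, I would write $\tilde v^{(\pm)}_{para} = V_0^{\pm} \pm V_1$ with $V_0^{\pm} = 2\hat a_\pm^2(1+\alpha_q^2\rho^{-2})$ and $V_1 = D|\tau|^{-1}\rho^{-4}$, and then analyze the linear part in \eqref{dopv_para_lin} and the quadratic remainder in \eqref{dopv_para_quad} separately. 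A direct computation shows that $\partial_\tau - \tfrac12\rho\partial_\rho - \lop_\rho$ annihilates $V_0^{\pm}$ identically, which is exactly the content of the formal stationary equation \eqref{eq:41}; the vanishing survives for arbitrary $\hat a_\pm$ by linearity of the operator in $\tilde v$. Applied to $\pm V_1$, the same operator produces a leading contribution of $\pm 2D|\tau|^{-1}\rho^{-4}$ from the $-\tfrac12\rho\partial_\rho$ piece, plus strictly smaller corrections of order $D|\tau|^{-2}\rho^{-4}$ and $D|\tau|^{-1}\rho^{-6}$.

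Next I would bound the quadratic remainder $|\tau|^{-1}(-\tilde v - \qop_\rho(\tilde v,\tilde v) + 2pu\tilde v/\tilde\eta)$ by a constant times $|\tau|^{-1}$, uniformly for $\rho \in [\zeta_1/\sqrt{|\tau|},\rho_2]$, $u\in[0,1]$, and any $h^*$ between the $\tilde\eta$-barriers. After substituting the explicit forms, $-\tilde v = O(1)$ dominates, while $\qop_\rho(\tilde v,\tilde v) = O(\rho^{-2})$ and $\tilde v/\tilde\eta = O(\rho^{-2})$; the constants $C$ depend on $\rho_2, \hat a_\pm, \hat b_\pm, p, q$ but crucially not on $D$. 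Combining the three pieces, the full operator on $\tilde v^{(\pm)}_{para}$ has the sign of $\pm 2D|\tau|^{-1}\rho^{-4} \mp C|\tau|^{-1}$, which is correct throughout $\rho \leq \rho_2$ as soon as $D > \tfrac12 C\rho_2^4$. The argument for $\tilde\eta^{(\pm)}_{para}$ is parallel: the linear operator $\partial_\tau - \tfrac12\rho\partial_\rho + 1 - \tfrac12\alpha_q^2\rho^{-1}\partial_\rho$ in \eqref{doph_para_lin} annihilates $\tfrac12\hat b_\pm^2\rho^2(1+\alpha_q^2\rho^{-2})$ (this is the stationary form of the $\tilde\eta$ equation) and yields $\pm 2D|\tau|^{-1}\rho^{-2}$ on the correction $\pm D|\tau|^{-1}\rho^{-2}$; the quadratic remainder in \eqref{doph_para_quad} is dominated by $|\tau|^{-1}\tilde\eta = O(|\tau|^{-1}\rho^2)$, giving the correct sign once $D$ is chosen to beat the analogous constant times $\rho_2^4$. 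Choosing $\tau_*$ sufficiently negative then absorbs all $O(|\tau|^{-2})$ subleading pieces, and the requirement $\zeta_1 \geq \sqrt D$ guarantees that at the inner edge $\rho = \zeta_1/\sqrt{|\tau|}$ the correction $D|\tau|^{-1}\rho^{-4} = D|\tau|\zeta_1^{-4}$ remains comparable to (and does not overwhelm) the $V_0$ piece $\sim \alpha_q^2\hat a^2|\tau|\zeta_1^{-2}$, which is needed so the barriers retain their approximate-solution shape near the tip interface.

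The main obstacle I anticipate is bookkeeping: ensuring that no residual $|\tau|^{-1}$-order term with the wrong sign survives after summing the various contributions of $\lop_\rho$ and $-\tfrac12\rho\partial_\rho$ on both $V_0^{\pm}$ and $V_1$, and verifying that the constant $C$ controlling the quadratic remainder is genuinely independent of $D$ so that the inequality $D > \tfrac12 C\rho_2^4$ can actually be satisfied. The uniformity clauses in Definition \ref{barrier_def} over $h^*, v^*, u^*$ are not serious: the coupling term $2pu\tilde v/\tilde\eta$ is always bounded between $0$ and its upper value, and any $v^*$ between the $\tilde v$-barriers satisfies $v^* = O(|\tau|^{-1})$, so the $v$-coupling in $\goph$ is of genuinely subleading order.
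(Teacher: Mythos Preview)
Your overall strategy matches the paper's, but the execution has a genuine gap at the inner edge of the parabolic region, where $\rho = \zeta_1/\sqrt{|\tau|} \to 0$ as $\tau \to -\infty$. Your asymptotic bounds on the quadratic remainder are only valid for $\rho$ bounded \emph{below}: in fact $\tilde v = 2\hat a_\pm^2(1+\alpha_q^2\rho^{-2}) \pm D|\tau|^{-1}\rho^{-4}$ is $O(\rho^{-2})$, not $O(1)$, as $\rho \to 0$, and since $\rho^2\qop_\rho$ is quadratic in $(\tilde v,\,\rho\partial_\rho\tilde v,\,\rho^2\partial_\rho^2\tilde v)$ one gets $\qop_\rho(\tilde v,\tilde v) \sim c\,\rho^{-6}$, not $O(\rho^{-2})$. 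Consequently the quadratic remainder is of order $|\tau|^{-1}\rho^{-6}$ near the inner edge, and your claimed leading linear term $\pm 2D|\tau|^{-1}\rho^{-4}$ cannot dominate it, no matter how large $D$ is. The same issue arises for $\tilde\eta$: at small $\rho$ the term $\tilde v\,\tilde\eta_{\rho\rho}$ is $O(\rho^{-4})$, not $O(\rho^2)$.

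The fix is precisely the ``correction'' you dismissed. The contribution $-\lop_\rho(V_1) = \alpha_q^2 D|\tau|^{-1}\rho^{-6}$ is not subleading for small $\rho$; it is the term that matches the $\rho^{-6}$ order of the worst quadratic piece. The paper makes this transparent by multiplying the whole inequality by $|\tau|\rho^6$ (respectively $|\tau|\rho^4$ for $\tilde\eta$): the linear part then contributes $\pm D\alpha_q^2$ plus terms of the same sign, while the quadratic part, after using the constraint $D\zeta^{-2} = D|\tau|^{-1}\rho^{-2} \le 1$, is bounded by a polynomial in $\rho^2$ and $D\zeta^{-2}$ with constants independent of $D$. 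This is exactly where the hypothesis $\zeta_1 \ge \sqrt{D}$ enters the argument --- not merely to keep the barriers ``approximately shaped'', as you suggest, but to make the quadratic bound genuinely $D$-free so that choosing $D$ large closes the inequality uniformly down to the inner edge.
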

\begin{proof}
  We look at $v$ first.   We need to check that for any $u \in [0, 1]$ and $\tilde \eta^* \geq \tilde \eta_{para}^-$, \eqref{dopv_para_lin}, \eqref{dopv_para_quad} has the correct sign.  The barrier $\tilde v^\pm_{para}$ can be written as
  \begin{align}
    \tilde v_{para}^\pm = \tilde v_{para} \pm D |\tau|^{-1}\rho^{-4}
  \end{align}
  The first line \eqref{dopv_para_lin} is linear and $\tilde v_{para}$ was chosen so that this line vanishes for $\tilde v_{para}$.  If we look at this first line and plug in $\tilde v _{para}^\pm$ we find
  \begin{align}
    |\tau|\rho^6\cdot \text{Line \eqref{dopv_para_lin}}
    &= \pm D |\tau|\rho^6\left( |\tau|^{-2}\rho^{-2} + 2 \rho^{-2} - |\tau|^{-1}\lop_{\rho}(\rho^{-4}) \right)\\
    &= \pm D |\tau|\rho^6\left( |\tau|^{-2}\rho^{-2} + 2 |\tau|^{-1}\rho^{-2} +  |\tau|^{-1}\rho^{-6} \right)\\
    &= \pm D \pm D \left( |\tau|^{-1}\rho^4 + 2 \rho^4\right) \label{lin_calc_parabar}
  \end{align}
  The first term, which has the correct sign, will dominate.  The second line \eqref{dopv_para_quad} gives us
  \begin{align}
    \left| \tau \rho^6 \cdot \text{Line \eqref{dopv_para_quad}} \right|
    &\leq \left| \rho^6 \tilde v_{para}^\pm \right|
      +
      \left| \rho^4\rho^2 \qop_\rho(\tilde v_{para}^\pm, \tilde v_{para}^\pm) \right|
      +
      \left|
      \left( \frac{2pu}{ \rho^{-2}\tilde \eta_{para}^-} \right) \rho^4 \tilde v_{para}^\pm 
      \right|
  \end{align}
  It is easy to check that each term here is bounded by a polynomial in $\rho^2$ and $D|\tau|^{-1}\rho^{-2} = D \zeta^{-2}$.  (To bound the $\rho^2 \qop_\rho (\tilde v^\pm_{para}, \tilde v^\pm_{para})$ term one just needs to know that $\rho^2\qop_\rho(v, v)$ is a quadratic in $v,\; \rho\partial_\rho v,\; \rho^2 \partial_\rho^2 v$.)  The setup of the lemma gives us a bound on $\rho^2$, as well as $\zeta^2 D \leq 1$.  Therefore $|\tau \rho^6 \cdot \text{Line \eqref{dopv_para_quad}}|$ is bounded independent of $D$, so taking $D$ large enough makes the first term of \eqref{lin_calc_parabar} dominate.

  Checking $h$ is a similar calculation.  The first line \eqref{dopv_para_lin} gives us:
  \begin{align}
    |\tau|\rho^4 \cdot \text{Line \eqref{dopv_para_lin}}
    &= \pm D |\tau|\rho^4 \left( |\tau|^{-2}\rho^{-2} + |\tau|^{-1}\rho^{-4} + |\tau|^{-1}\rho^{-2} + \alpha_q^2 |\tau|^{-1}\rho^{-4} \right) \\
    &= \pm \alpha_q^2 D  \pm D \left( |\tau|^{-1}\rho^2 + 1 + \rho^2 \right).
  \end{align}
  Then we just check that
  \begin{align}
    |\tau|\rho^4 \cdot \text{Line \eqref{dopv_para_quad}}
  \end{align}
  is bounded by a polynomial in $\rho^2$ and $D |\tau|^{-1}\rho^{-2}$, and we are done.
\end{proof}

\subsection{Barriers in the tip region}
In terms of the coordinates for the tip we can write $\dopv$ and $\doph$ as
\begin{align}
  \label{eq:57}
  \theta \left( \dopv [v, h] \right)
  ={}&- \lop[v] - \qop[v,v]  \\
  &+ |\tau|^{-1}\left(-\oh \zeta \partial_\zeta v  +  \partial_\tau \fixz v \right)\\
  &+ |\tau|^{-2}\left[  \oh \zeta \partial_\zeta v + 2 p \frac{v}{\tilde H}(\psi_s^2)\right]
  &\\
  \tau^{-2} \left( \doph [v, h] \right)
  ={}&- \rop[v, \tilde H] + v \frac{\tilde H_{\zeta}^2}{\tilde H}\\
     &+ |\tau|^{-1}
       \left(
       - \oh \zeta \partial_\zeta \tilde H
       + \tilde H 
       + \partial_\tau\fixz \tilde H
       \right)  \\
     &+
       |\tau|^{-2}
       \left[
       - \oh \zeta \partial_\zeta \tilde H
       + \tilde H
       - \alpha_p^2
       \right] \\
\end{align}

The formal solutions in the tip region are given by
\begin{align*}
  v(\zeta) &= \bry(c_0 \zeta) + c_0^{-2}|\tau|^{-1} \cry(c_0\zeta),  \\
  \tilde H_1(\zeta) &= c_0^2 \alpha_q^4 + |\tau|^{-1} \ary(c_0\zeta).
\end{align*}
\begin{lemma}\label{tip_barriers}
  Let $\zeta_2 > 0$, $\epsilon > 0$ be given.
  Let $c_{\pm}, \bar c_{\pm}$ satisfy 
  \begin{align}
    (1-1/2) c_0 < c_+ <  c_0 = \frac{1}{\sqrt{2} \alpha_q k}  < c_- <  (1+1/2) c_0 \\
    (1-1/2) c_0 < \bar c_- <  c_0 = \frac{1}{\sqrt{2} \alpha_q k}  < \bar c_+ <  (1+1/2) c_0 
  \end{align}

  There exists $t_*$ so that 
  \begin{align*}
    v_{tip}^\pm (\zeta, t) &= \bry(c_\pm \zeta) + (1 \mp \epsilon) |\tau|^{-1} c_\pm^{-2} \cry(c_\pm \zeta) \\
    \tilde H_{tip}^\pm(\zeta, t) &=  \alpha_q^4 (\bar c_\pm^2) +  |\tau|^{-1} \ary(c_\pm \zeta)
  \end{align*}
  are barriers for $t < t_*$ and $\zeta \in [0, \zeta_2]$.
\end{lemma}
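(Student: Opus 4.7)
The plan is to verify each item of Definition \ref{barrier_def} for $v_{tip}^\pm$ and $\tilde H_{tip}^\pm$ by direct substitution into the tip-coordinate expressions for $\theta\,\dopv[v,h]$ and $\tau^{-2}\,\doph[v,h]$ displayed just before the lemma, leveraging the formal calculations of Section \ref{formal_tip}.

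First I would check the smoothness and Neumann conditions at $r=0$ (equivalently $\zeta=0$): these follow from $\bry(0)=1$, $\bry'(0)=0$, together with $\cry(\zeta),\ary(\zeta) = O(\zeta^2)$ and vanishing derivative at the origin. Proper ordering $v^- < v^+$ and $\tilde H^- < \tilde H^+$ for $\zeta > 0$ (respectively $t$ small) follows from $c_+ < c_-$ together with the strict monotonicity of $\bry$, and from $\bar c_-^2 < \bar c_+^2$; the $|\tau|^{-1}$ corrections are uniformly smaller in magnitude than the leading-order separations.

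For the sub/supersolution property of $v^\pm$, I would substitute $v=v_{tip}^\pm$ into $\theta\,\dopv[v,h^*]$ and expand. The $O(1)$ term $-\lop[\bry(c_\pm\zeta)]-\qop[\bry(c_\pm\zeta),\bry(c_\pm\zeta)]$ vanishes identically because $\bry(c\cdot)$ is a steady Bryant soliton profile for every $c$. At order $|\tau|^{-1}$, the combination $-\oh\zeta\partial_\zeta v_0 - 2\qop[v_0,v_1]-\lop[v_1]$ is annihilated by $v_1 = c_\pm^{-2}\cry(c_\pm\zeta)$ thanks to \eqref{eqn_for_v1}; the extra factor $(1\mp\epsilon)$ leaves a residual of the form $\pm\epsilon\,|\tau|^{-1}\cdot(-\oh\zeta\partial_\zeta \bry(c_\pm\zeta))$, which is strictly positive/negative since $\bry$ is decreasing. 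All remaining contributions (the $\psi_s^2$-coupling, the $\partial_\tau\!\fixz$ derivative of the correction, and the bracketed pieces) carry an explicit $|\tau|^{-2}$ prefactor and are uniformly bounded on $\zeta\in[0,\zeta_2]$ with $h^* \geq \tilde H^-$, so the signed $|\tau|^{-1}$ term dominates once $t<t_*$ is small. The argument for $\tilde H^\pm$ is analogous: since the leading term $\alpha_q^4\bar c_\pm^2$ is constant in $\zeta$, $\rop$ annihilates it and the quadratic term $v(\tilde H_\zeta)^2/\tilde H$ is $O(|\tau|^{-2})$; the identity $\rop[\bry(c_\pm\zeta),\ary(c_\pm\zeta)] = c_\pm^2\alpha_q^4$ from \eqref{eqn_for_H1} produces a residual $(\bar c_\pm^2 - c_\pm^2)\alpha_q^4$ at order $|\tau|^{-1}$, whose sign is the required one under the hypotheses $\bar c_+ > c_+$ and $\bar c_- < c_-$.

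The main obstacle I expect is the uniformity of the $O(|\tau|^{-2})$ remainder estimates. The operator $\goph$ depends on both $v^*$ and $u^*$ ranging over the full admissible band, and $\bar c_\pm^2 \ne c_\pm^2$ is not small, so one must verify that the remainders are polynomial in the bounded quantities ($v^*\in[v^-,v^+]$, $u^*\in[0,1]$, $\tilde H^\pm$ with uniform positive lower bound $\alpha_q^4\bar c_-^2$, and $\zeta\in[0,\zeta_2]$) multiplied by an explicit $|\tau|^{-2}$ factor. This compactness plus lower-bound package is what lets a single choice of $t_*$ handle all four inequalities simultaneously; once in hand, the sign-definite $|\tau|^{-1}$ term produced in the previous paragraph closes the argument.
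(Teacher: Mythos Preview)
Your approach is the same as the paper's, and the $\tilde H$ argument is fine since the leading $|\tau|^{-1}$ residual $(\bar c_\pm^2 - c_\pm^2)\alpha_q^4$ is a nonzero constant, so a uniformly bounded $|\tau|^{-2}$ remainder is dominated once $t_*$ is small.

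For the $v$ barriers, however, ``uniformly bounded on $[0,\zeta_2]$'' is not enough, and this is a genuine gap. Your signed leading term $\pm\epsilon\,|\tau|^{-1}\bigl(-\tfrac12\zeta\,\partial_\zeta\bry(c_\pm\zeta)\bigr)$ vanishes at $\zeta=0$; in fact $-\tfrac12\zeta\,\partial_\zeta v_0 \sim b_0^2c_\pm^2\,\zeta^2$ there. If the $|\tau|^{-2}$ remainder were merely bounded by some $M$, then for $\zeta^2 \lesssim M/(\epsilon m\,|\tau|)$ the remainder would dominate and the inequality would fail near the tip, so no single $t_*$ would work on all of $[0,\zeta_2]$. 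What the paper actually uses is the two-sided quadratic estimate
\[
-\tfrac12\,\zeta\,\partial_\zeta v_0 \;\ge\; m\,\zeta^2 \quad\text{and}\quad |F_2(\zeta)|+|F_3(\zeta)| \;\le\; M\,\zeta^2 \qquad\text{on }[0,\zeta_2],
\]
the first coming from $\bry''(0)<0$ together with $\bry'<0$ for $\zeta>0$, and the second from the asymptotics $\cry(\zeta)=O(\zeta^2)$, $\bry'(0)=0$, so that every remaining term (e.g.\ $\zeta\partial_\zeta v_1$, $\qop[v_1,v_1]$, $\zeta\partial_\zeta v$) also vanishes quadratically at the origin. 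With both estimates in hand one can choose $|\tau|^{-1}<\epsilon m/(2M)$ uniformly. You need to supply this quadratic matching of the remainder to close the $v$ part of the argument.
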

We make a few remarks on the choices of barriers above.  For regularity it is vital that $v$ goes to $1$ at a quadratic rate at $r=0$.  (See Lemma \ref{regularity_tip}.)  For that reason we cannot put a coefficient on the time-independent term, as we did before.  Second, note that $\bry(\zeta)$ is decreasing near zero; that is why we choose $c_+ < c_-$ but $\bar c_+ > \bar c_-$.  For gluing we actually may need to choose $\bar c_+ \neq c_-$.

\begin{proof}
  Write
  \begin{align}
    \label{eq:58}
    v_0 = \bry(c_\pm \zeta) \quad v_1 = c^{-2}_\pm\cry(c_\pm \zeta)  \\
    \tilde H_0 = \alpha_q^4 \bar c_\pm \quad \tilde H_1 = \ary(c_\pm \zeta).
  \end{align}
  First check $\dopv$.  Remember $v_0$ is such that $\lop[v_0] + \qop[v_0, v_0] = 0$, and $v_1$ is such that $-\oh \zeta \partial_\zeta v_0 = \lop[v_1] + 2 \qop[v_0, v_1]$.  Therefore,
  \begin{align}
    \label{eq:55}
    \theta \dopv [v^\pm_{tip},h^{\pm}_{tip}]
    ={}& - \lop[v_0] - \qop[v_0, v_0] \\
       &+|\tau|^{-1} \left(
         -\oh \zeta \partial_\zeta v_0 -  (1\mp \epsilon) \lop[v_1] -  (1\mp \epsilon)2\qop[v_0, v_1]
         \right)\\
       &+ |\tau|^{-2} \left(
         -\oh (1\mp \epsilon)\zeta \partial_\zeta v_1
         - |\tau|^{-1} (1\mp \epsilon)v_1
         - (1\mp \partial)\qop[v_1, v_1]
         \right)\\
       &+ |\tau|^{-2} \left(
         \oh \zeta \partial_\zeta v + 2p \frac{v}{\tilde H}(\psi_s^2)
         \right)\\
    ={}&{} |\tau|^{-1}\left(\mp \epsilon \oh \zeta \partial_\zeta v_0\right)\\
       &+|\tau|^{-1}
         \left(
         -(1 \mp \epsilon)|\tau|^{-1} \oh \zeta \partial_\zeta v_0
         - |\tau|^{-1} (1\mp \epsilon) \lop[v_1]
         - |\tau|^{-1} (1\mp \epsilon)2\qop[v_0, v_1]
         \right)\\
       &+ |\tau|^{-2}F_2(\zeta) + |\tau|^{-3}F_2(\zeta) \\
    ={}& |\tau|^{-1} \left( \mp \epsilon  \oh \zeta \partial_\zeta v_0 \right) \\
       &+ |\tau|^{-2}F_2(\zeta) + |\tau|^{-3}F_2(\zeta) \\
  \end{align}
  The first term has the correct sign because $v_0$ is decreasing; we want to show that it controls the rest.  The function $v_0$ has strictly negative second derivative at zero and $\partial_\zeta\bry < 0$ for $\zeta > 0$.  By continuity there is $m$ depending on $\zeta_2$ which is small enough so that $- \oh \zeta \partial_\zeta v_0 \geq m \zeta^2$ for $\zeta \in [0, \zeta_2]$.  By using the asymptotics we know for $v_0$ and $v_1$, similar continuity considerations bound
  \begin{align}
    |F_2(\zeta)| + |F_3(\zeta)| \leq M \zeta^2
  \end{align}
  for a large $M$ depending on $\zeta_2$.  By demanding that $\tau < \tau_*$ is small enough so that $|\tau|^{-1} < \frac{m\epsilon}{2M}$, we can force $\dopv$ to have the correct sign.  

  Now we check $\doph$.  Remember $\tilde H_1$ is such that $\alpha_q^4 c_\pm^2= \rop[v_0, \tilde H_1]$.  Calculate,
  \begin{align}
    \label{eq:61}
    |\tau|^{-2} \doph[v^\pm_{tip}, h^\pm_{tip} ]
    ={}&{} |\tau|^{-1}\left( \tilde H_0 - \rop[v_0, \tilde H_1]\right) \\
       &+ |\tau|^{-2} \left(
         v \frac{(\partial_\zeta\tilde H_{1})^2}{\tilde H}
         - \oh \zeta \partial_\zeta \tilde H_1
         + \tilde H_1
         + |\tau|^{-1} \tilde H_1
         \right)\\
       &+ |\tau|^{-2} \left[
         - \oh \zeta |\tau|^{-1}\partial_\zeta \tilde H_1 + \tilde H -  \alpha_p^2
         \right]
    &\\
    &= |\tau|^{-1}(\bar c_\pm - c _\pm)\\
    &+ |\tau|^{-2}G_2(\zeta) + |\tau|^{-3}G_3(\zeta)
  \end{align}
  The first line has the right sign because $\bar c_+ > c_0 > c_+$ and $\bar c_- < c_0 < c_-$.  $G_2$ and $G_3$ are bounded by some constant $M$ on $[0, \zeta_2]$.  Therefore if $|\tau|^{-1} < \frac{\bar c_+ - c_+}{2M}$ (and similarly for $c_-, \bar c_-$) the claim holds.
\end{proof}

\subsection{Gluing the parabolic and outer barriers}

\begin{lemma}
  Let $\epsilon$ and $\delta$ be given.  Consider the barriers for the outer and parabolic regions, given by Lemmas \ref{outer_barriers} and \ref{para_barriers}.  We can chose the constants $\hat a_\pm$ and $\hat b^\pm$ in the definition of the parabolic barriers, and take $\rho_2 = 2 \rho_1$, so that for all $\tau < \tau_*$
  \begin{align}
    v^+_{para}(\rho = \rho_1, \tau) < v^+_{out}(\rho_1, \tau), \quad v^+_{para}(\rho_2, \tau) > v^+_{out}(\rho_2, \tau) \\
    v^-_{para}(\rho_1, \tau) > v^-_{out}(\rho_1, \tau), \quad v^-_{para}(\rho_2, \tau) < v^-_{out}(\rho_2, \tau), 
  \end{align}
  and similarly for $h$.  Here we possibly decrease $\tau_*$ and increase $\rho_1$.

  In particular, we can choose $\hat a_+$ arbitrarily close to $a_+$, etc.
\end{lemma}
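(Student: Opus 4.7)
The plan is to write both barrier families in the common parabolic coordinates $(\rho,\tau)$ and then verify the four inequalities by a leading-order comparison. Using $r=\rho e^{\tau/2}$ and $|\log r|=\tfrac12|\tau|-\log\rho$, the outer barriers from Lemma \ref{outer_barriers} become
\begin{align}
  \tilde v_{out}^{(\pm)}
    &= \frac{2 a_\pm^2}{1 - 2\log\rho/|\tau|}\bigl(1 + d_\pm \alpha_q^2\rho^{-2}\bigr), \\
  \tilde \eta_{out}^{(\pm)}
    &= \tfrac12\, b_\pm^2\bigl(1 - 2\log\rho/|\tau|\bigr)\rho^2\bigl(1 + d_\pm \alpha_q^2\rho^{-2}\bigr),
\end{align}
while the parabolic barriers from Lemma \ref{para_barriers} read
\begin{align}
  \tilde v_{para}^{(\pm)}
    &= 2\hat a_\pm^2(1+\alpha_q^2\rho^{-2}) \pm D|\tau|^{-1}\rho^{-4},\\
  \tilde \eta_{para}^{(\pm)}
    &= \tfrac12\hat b_\pm^2\rho^2(1+\alpha_q^2\rho^{-2}) \pm D|\tau|^{-1}\rho^{-2}.
\end{align}

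Take $\rho_2 = 2\rho_1$ so the overlap is the bounded strip $\rho\in[\rho_1,2\rho_1]$. On this strip both the factor $(1-2\log\rho/|\tau|)^{\pm 1}$ and the corrections $D|\tau|^{-1}\rho^{-k}$ are $O(|\tau|^{-1})$ uniformly. Thus the leading part of $\tilde v_{out}^{(+)} - \tilde v_{para}^{(+)}$ is
\begin{equation}
  2(a_+^2-\hat a_+^2)(1+\alpha_q^2\rho^{-2}) + 2 a_+^2 \epsilon\,\alpha_q^2 \rho^{-2},
\end{equation}
the second term coming from $d_+ = 1+\epsilon$. Writing $\hat a_+^2 = a_+^2(1+\mu_+)$, the bracket $-\mu_+(1+\alpha_q^2\rho^{-2}) + \epsilon\alpha_q^2\rho^{-2}$ is strictly positive at $\rho_1$ and strictly negative at $\rho_2$ precisely when
\begin{equation}
  \frac{\epsilon \alpha_q^2}{\rho_2^2+\alpha_q^2} \;<\; \mu_+ \;<\; \frac{\epsilon \alpha_q^2}{\rho_1^2+\alpha_q^2},
\end{equation}
a nonempty interval since $\rho_1<\rho_2$. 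Because $\rho_1\gtrsim\epsilon^{-1/2}$, both endpoints are $o(\epsilon)$, so $\hat a_+$ can be chosen arbitrarily close to $a_+$.

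The $\tilde\eta$ comparison produces the identical bracket (up to the harmless prefactor $\tfrac12\rho^2$), so the same interval governs the choice $\hat b_+^2 = b_+^2(1+\mu_+^h)$. For the minus barriers $d_- = 1-\epsilon$ flips the sign of the $\epsilon$-term, so the admissible range becomes
\begin{equation}
  \frac{\epsilon \alpha_q^2}{\rho_2^2+\alpha_q^2} \;<\; -\mu_- \;<\; \frac{\epsilon \alpha_q^2}{\rho_1^2+\alpha_q^2},
\end{equation}
meaning $\hat a_-^2$ and $\hat b_-^2$ are chosen slightly \emph{smaller} than $a_-^2$ and $b_-^2$.

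Once these leading-order choices are fixed, one picks $\tau_*$ sufficiently negative (and, if needed, slightly enlarges $\rho_1$) so that the $O(|\tau|^{-1})$ remainders on the overlap are dominated by the leading strict gap, which is of size $\gtrsim\epsilon^2$. I do not anticipate any genuine obstacle beyond bookkeeping: the remainders are uniform on the compact strip $\rho\in[\rho_1,2\rho_1]$, so a single $\tau_*$ handles all four inequalities at once.
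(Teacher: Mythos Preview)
Your proposal is correct and follows essentially the same approach as the paper: you express both families in parabolic coordinates, pass to the $\tau\to-\infty$ limit, and reduce to an elementary comparison of the leading terms. The paper phrases the comparison as a quotient $M(\rho)=\lim_{\tau\to-\infty}\eta_{out}^+/\eta_{para}^+$ and asks when it crosses $1$, whereas you take the difference and ask when it changes sign; these are of course equivalent and yield the same admissible range for the parameter $\mu_+$.
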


\begin{proof}
  We do the example of matching the supersolutions for $h$.  Write both of the supersolutions in the parabolic coordinates.
  \begin{align}
    \label{eq:25}
    \eta_{out}^+
    &= b_+^2\left( |\log \rho + \oh \tau| \right)\left( \rho^2 + d_+ \alpha_q^2 \right) \\
    &= \oh b_+^2 |\tau|  \left( \rho^2 + d_+ \alpha_q^2 \right) \left(|1 + 2 \tau^{-1} \log \rho | \right) \\
    \eta_{para}^+  &= \oh \hat b_+^2 |\tau|(\rho^2 + \alpha_q^2 (1 + D\tau^{-1}\rho^{-2}) ) \\
  \end{align}
  Write $\hat \delta = (b_+^2/ k^{-2})-1$ (note $\hat \delta \in (0, 1/2)$) and let
  \begin{align}
    \label{eq:44}
    M(\rho)
    &= \lim_{\tau \to -\infty}\frac{\eta_{out}^+}{\eta_{para}^+} \\
    &= \lim_{\tau \to -\infty}\frac{b_+^2}{\hat b_+^2} \frac{\rho^2 + d_+ \alpha_q^2}{\rho^2 + (1 + D\rho^{-2}|\tau|^{-1}) \alpha_q^2} \left(|1 + 2 \tau^{-1} \log \rho | \right) \\
    &= \frac{(1 + \delta)}{(1 + \hat \delta)}\frac{\rho^2 + d_+ \alpha_q^2}{\rho^2 + \alpha_q^2}
  \end{align}
  We will ensure that
  \begin{align}
    \label{eq:45}
    M(\rho_1) > 1 \text{ and } M(\rho_2) < 1.
  \end{align}
  and then the quotient $\eta^+_{out}/\eta^{+}_{para}$ will stay on the correct side of $1$ for small $\tau$ by continuity.
  
  Let $m_+ = (1 + \delta)/(1 + \hat \delta) = (b_+/\hat b_+)$. Since
  \begin{align}
    \label{eq:48}
    \lim_{\rho_2 \to \infty} M(\rho_2, \tau) = m_+
  \end{align}
  we will choose $m_+< 1$.  Now we want
  \begin{align}
    \label{eq:53}
    M(\rho_1) = \left( m_+ \right) \frac{\rho^2 + d_+ \alpha_q^2}{\rho^2 + \alpha_q^2} < 1
    < (m_+)\frac{(2\rho)^2 + d_+ \alpha_q^2}{(2\rho)^2 + \alpha_q^2}M(\rho_2)
  \end{align}
  From which we see we can first choose $\rho$ so large that $\frac{\rho^2 + d_+ \alpha_q^2}{\rho^2 + \alpha_q^2}$ is just barely larger than 1.  Then, choose $m_+ < 1$ small enough to bring $M(\rho_1)$ below 1, but not $M(\rho_2)$.
\end{proof}

\subsection{Gluing the tip and parabolic barriers}
\begin{lemma}
  Consider the definitions of the barriers in the parabolic and tip regions, in Lemmas \ref{para_barriers} and \ref{tip_barriers}.

  Let $\hat a_{\pm}$, $\hat b_{\pm}$, $\epsilon$, and $\rho_2$ be given.  We can choose $\zeta_1$ and $\zeta_2 = 2 \zeta_1$, as well as the constants $c_{\pm}, \bar c_{\pm}$, so that 
  \begin{align}
    v^+_{tip}(\zeta = \zeta_1, t) < v^+_{para}(\zeta_1, t), \quad v^+_{tip}(\zeta_2, t) > v^+_{para}(\zeta_2, t) \\
    v^-_{tip}(\zeta_1, t) > v^-_{para}(\zeta_1, t), \quad v^-_{tip}(\zeta_2, t) < v^-_{para}(\zeta_2, t), 
  \end{align}
  for all $t< t_*$, possibly making $t_*$ smaller.
\end{lemma}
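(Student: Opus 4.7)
The plan is to mimic the outer-to-parabolic gluing proof just above: pull both families of barriers into the common coordinate $\zeta$, take pointwise $\tau \to -\infty$ limits at fixed $\zeta \in [\zeta_1, 2\zeta_1]$, arrange strict gluing inequalities between those limits by tuning the free constants $c_\pm, \bar c_\pm$, and then carry the inequalities to finite $\tau$ by continuity.

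First I would substitute $\rho = \zeta/\sqrt{|\tau|}$ into the parabolic barriers of Lemma~\ref{para_barriers} and use $v = |\tau|^{-1}\tilde v$ and $\tilde H = \tilde \eta$ to get
\[
v_{para}^\pm = 2\hat a^2_\pm |\tau|^{-1} + 2\hat a^2_\pm \alpha_q^2 \zeta^{-2} \pm D\zeta^{-4}, \quad \tilde H_{para}^\pm = \tfrac12 \hat b^2_\pm \zeta^2 |\tau|^{-1} + \tfrac12 \hat b^2_\pm \alpha_q^2 \pm D\zeta^{-2}.
\]
The $\tau \to -\infty$ pointwise limits at fixed $\zeta$ are $V^\pm_{para}(\zeta) = 2\hat a^2_\pm \alpha_q^2 \zeta^{-2} \pm D\zeta^{-4}$ and $H^\pm_{para}(\zeta) = \tfrac12 \hat b^2_\pm \alpha_q^2 \pm D\zeta^{-2}$. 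On the tip side the $|\tau|^{-1}\cry(c_\pm\zeta)$ and $|\tau|^{-1}\ary(c_\pm\zeta)$ corrections vanish in the same limit, leaving $V^\pm_{tip}(\zeta) = \bry(c_\pm \zeta)$ and $H^\pm_{tip}(\zeta) = \alpha_q^4 \bar c_\pm^2$.

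Next, using the asymptotic $\bry(y) \sim y^{-2}$, I would parametrize $c_\pm^{-2} = 2\hat a^2_\pm \alpha_q^2 (1 + s_\pm)$ and $\bar c_\pm^2 = \frac{\hat b^2_\pm}{2\alpha_q^2}(1 + \bar s_\pm)$, then reduce the eight desired strict inequalities at $\zeta_1$ and $2\zeta_1$ (super and sub, $v$ and $\tilde H$) to four linear conditions, one on each of $s_\pm, \bar s_\pm$. Each such condition takes the form of lying in a non-empty interval of width of order $D/\zeta_1^2$, with the expected signs $s_+, \bar s_+ > 0$ and $s_-, \bar s_- < 0$. Since the hypotheses give $\hat a_\pm, \hat b_\pm$ close to $k, k^{-1}$ respectively and $c_0 = 1/(\sqrt2\,\alpha_q k)$, any choice in these intervals automatically falls inside the ranges $c_+, \bar c_- < c_0 < c_-, \bar c_+$ required by Lemma~\ref{tip_barriers}.

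Finally, once the limiting differences $V^\pm_{tip} - V^\pm_{para}$ and $H^\pm_{tip} - H^\pm_{para}$ have definite signs at $\zeta_1$ and $2\zeta_1$, the $|\tau|^{-1}$ remainders in both families are uniformly bounded on $[\zeta_1, 2\zeta_1]$, and a continuity argument identical in structure to the outer-parabolic gluing extends the inequalities to all $\tau < \tau_*$ (equivalently $t < t_*$) for $\tau_*$ sufficiently negative, possibly forcing $t_*$ to be smaller than in the preceding lemmas. The main obstacle I anticipate is that both limit families have the same leading $\zeta^{-2}$ (respectively constant) scaling at large $\zeta$, so the separation cannot come from a leading-order mismatch and must be carved out of the sub-leading $\pm D\zeta^{-4}$ (respectively $\pm D\zeta^{-2}$) correction in the parabolic barriers. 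This forces me to pick $\zeta_1$ large enough that the Bryant remainder $\bry(c\zeta) - c^{-2}\zeta^{-2}$ is genuinely of lower order than $D/\zeta_1^2$ uniformly on $[\zeta_1, 2\zeta_1]$, which relies on the quantitative decay rate of $\bry(y) - y^{-2}$ as $y \to \infty$ noted alongside Lemma~\ref{tip_barriers}.
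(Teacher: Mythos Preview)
Your proposal is correct and follows essentially the same route as the paper: rewrite both barrier families in the $\zeta$ coordinate, pass to the $\tau\to-\infty$ limit at fixed $\zeta$, tune the free constants so the limiting profiles satisfy strict inequalities at $\zeta_1$ and $\zeta_2=2\zeta_1$, and then recover the finite-$\tau$ statement by continuity. The paper phrases the comparison via the ratio $M(\zeta)=\lim_{\tau\to-\infty}H^+_{para}/H^+_{tip}$ rather than differences, and only writes out the $H^+$ case explicitly (deferring the $v$ gluing, which needs the finer Bryant asymptotics you flag at the end, to \cite{ACK}, Lemma~6); but these are presentational differences, not substantive ones.
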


\begin{proof}
Put the parabolic barriers in the tip coordinates.
\begin{align}
  \label{eq:62}
  v_{para}^{\pm} &= 2 \hat a_\pm^2 \left( |\tau|^{-1} +  \alpha_q^2 (1 \pm D\zeta^{-2}) \zeta^{-2} \right)\\
  H^{\pm}_{para} &= \tau^2 \left( \oh \hat b_{\pm}^2 \left(\alpha_q^2  (1 \pm D \zeta^{-2}) + |\tau|^{-1}\zeta^2 \right) \right) \\
  v_{tip}^\pm  &= \bry(c_\pm \zeta) + (1 \mp \epsilon) |\tau|^{-1} c_\pm^{-2} \cry(c_\pm \zeta) \\
  H_{tip}^\pm &= \tau^2 \left( \alpha_q^4 (\bar c_\pm)^2 + (1 \mp \epsilon)|\tau|^{-1} \ary(c_\pm \zeta) \right)
\end{align}

We only do the example of gluing the supersolutions for $H$.  The other cases are similar, although to glue the $v$ barriers we need to use the asymptotics for $\bry$.  The work for $v$ was also done in \cite{ACK}, Lemma 6.  Let
\begin{align}
  \label{eq:64}
  M(\zeta)
  &= \lim_{\tau \to -\infty}\frac{ H_{para}^+}{ H_{tip}^+}\\
  &= \frac{\hat b_+^2 }{(1 - \epsilon)(2\alpha_q^2\bar c_{+}^2)} (1 + D \zeta^{-2})
  = \frac{b_0^2 (\hat b_+^2/b_0^2) }{(1 - \epsilon)(2 \alpha_q^2 c_0^2 (\bar c_+^2/c_0^2))}(1 + D \zeta^{-2})\\
  &= \frac{(\hat b_+/b_0)^2}{(\bar c_+/c_0)^2 } \frac{(1 + D \zeta^{-2})}{(1-\epsilon)}
\end{align}
In the last calculation we used the definition of $c_0$.
We want this to be larger than 1 at $\zeta_1$ and smaller than 1 at $\zeta_2$.  Note that $M$ is decreasing and $m_+ \defeq \lim_{\zeta \to \infty}M(\zeta) = \frac{(\hat b_+/b_0)^2}{(\bar c_+/c_0)^2(1-\epsilon)}$, so we will certainly want to choose
\begin{align}
  \label{eq:65}
  m_+ < 1, \text{ equivalently } (\bar c_+/c_0)^2 > \frac{(\bar b_+/b_0)^2}{1-\epsilon},
\end{align}
but only barely.

If $\zeta_1 = \sqrt{D/\epsilon}$ as Lemma \ref{para_barriers} allows, then we can force then the desired inequalities read
\begin{align}
  \label{eq:46}
  (1 + \epsilon) m_+
  > 1
  > (1 + \epsilon (\zeta_1/\zeta_2)^2)m_+ = (1 + \epsilon/4) m_+
\end{align}
which we may make hold by choosing $m_+$ in $\left[\frac{1}{1 + \epsilon}, \frac{1}{1 + \epsilon/4} \right]$.
\end{proof}

\appendix
\section{Derivation of the $r$ coordinate system}
In this section we derive formulae for translating into the $r$ coordinate system.  Since $v = (\partial_s \phi)^2$ and $r=\phi$ we have
\begin{align*}
  \partial_s = \sqrt{v}\partial_r.
\end{align*}
From this we find,
\begin{align*}
  \partial_s^2
  &= v\partial_r^2 + \oh v^{-1/2}(\partial_s v)\partial_r \\
  &= v \partial_r^2 + \oh v_r\partial_r. 
\end{align*}
Using the chain rule and the evolution of $\phi$ \eqref{evo_phi} we have the following formula for comparing time derivatives with fixed $x$ or fixed $r$. 
\begin{align*}
  \partial_t \fixx
  &= \partial_t \fixr + (\partial_t\fixx \phi )\partial_r \\
  &= \partial_t \fixr + (\phi_{ss} + (p \psi^{-1}\psi_s + q \phi^{-1} \phi_s)\phi_s - \phi^{-1}\phi_s^2 - (q-1)\phi^{-1})\partial_r \\
  &= \partial_t \fixr + ((\sqrt{v})_s + (p \psi^{-1}\sqrt{v}\psi_r + q r^{-1} \sqrt{v})\sqrt{v} - r^{-1}v - (q-1)r^{-1})\partial_r \\
  &= \partial_t \fixr + \left(\sqrt{v}(\sqrt{v})_r  + p \psi^{-1}v\psi_r + (q-1) r^{-1}v  - (q-1)r^{-1}\right) \partial_r \\
  &= \partial_t \fixr + \left(\oh v_r  + p \psi^{-1}\psi_r v - (q-1) r^{-1}(1-v) \right) \partial_r \\
\end{align*}

Using these formulae we can convert the evolutions of $\phi$ and $\psi$ in the $s$ coordinate to the evolutions of $\phi$ and $v$ in the $r$ coordinate.  First, we derive the evolution for $\phi_s$.  Using the commutation formula for $\partial_t \fixx$ and $\partial_s$,
\begin{align*}
  \partial_t\fixx \phi_s
  &= \partial_s \left(\partial_t \fixx \phi \right) - \left( p \frac{\psi_{ss}}{\psi} + q \frac{\phi_{ss}}{\phi} \right)\phi_s\\
  &= \partial_s \left(
    \phi_{ss}
    +  p \frac{\psi_s}{\psi}\phi_s
    +(q-1)\phi^{-1}\left(\phi_{s}^2 - 1 \right)
  \right) \\
  &- \left( p \frac{\psi_{ss}}{\psi} + q \frac{\phi_{ss}}{\phi} \right)\phi_s\\
  &= \partial_s^2 \left(\phi_s \right)\\
  &+ p \frac{\psi_{ss}}{\psi} \phi_s - p \frac{\psi_{s}^2}{\psi^2}\phi_s + p \frac{\psi_s}{\psi}\phi_{ss}
    - (q-1)\phi^{-2}(\phi_s^2 - 1)\phi_s + 2 (q-1) \phi^{-1}\phi_s \phi_{ss}\\
  &- \left( p \frac{\psi_{ss}}{\psi} + q \frac{\phi_{ss}}{\phi} \right)\phi_s\\
  &= \partial_s^2 \left(\phi_s \right) + \left( p \frac{\psi_s}{\psi} + (q-2)\frac{\phi_s}{\phi}\right) \partial_s \phi_s\\
  &+ \left( -p \frac{\psi_s^2}{\psi^2} - (q-1) \phi^{-2}(\phi_s^2-1)\right)\phi_s
\end{align*}
From the evolution of $\phi_s$ we can derive the evolution of $v = \phi_s^2$.
\begin{align*}
  \partial_t \fixx v
  &= 2 \phi_s \partial_t (\phi_s) \\
  &= 2 \phi_s\partial_s^2 \left(\phi_s \right)
    + \left( p \frac{\psi_s}{\psi} + (q-2)\frac{\phi_s}{\phi}\right) 2\phi_s\partial_s \phi_s\\
  &+ \left( -p \frac{\psi_s^2}{\psi^2} - (q-1) \phi^{-2}(\phi_s^2-1)\right)2\phi_s\phi_s \\
  &= \partial_s^2 \left(\phi_s^2 \right) - \frac{1}{2\phi_s^2}(\partial_s\phi_s^2)^2
    + \left( p \frac{\psi_s}{\psi} + (q-2)\frac{\phi_s}{\phi}\right) \partial_s (\phi_s^2) \\
  &+ 2\left( -p \frac{\psi_s^2}{\psi^2} - (q-1) \phi^{-2}(\phi_s^2-1)\right)(\phi_s)^2 \\
  &= v_{ss} - \frac{1}{2v}v_s^2 + \left( p \frac{\psi_s}{\psi} + (q-2) \frac{\sqrt{v}}{r} \right) v_s \\
  &+ 2\left( -p \frac{\psi_s^2}{\psi^2} + (q-1) r^{-2}(1-v)\right)v \\
  &= v v_{rr} + \oh v_r^2 - \oh v_r^2 + \left( \sqrt{v}p \frac{\psi_r}{\psi} + (q-2) \frac{\sqrt{v}}{r} \right) \sqrt{v}v_r \\
  &+ 2\left( -p v\frac{\psi_r^2}{\psi^2} + (q-1) r^{-2}(1-v)\right)v \\
  &= v v_{rr} + (q-2) r^{-1}vv_r + 2 (q-1) r^{-2}v(1-v) + p \frac{\psi_r}{\psi}vv_r - 2 p v^2 \frac{\psi_r^2}{\psi^2}
\end{align*}
Now we switch from $\partial_t \fixx$ to $\partial_t \fixr$.
\begin{align*}
  \partial_t \fixr v
  &= \partial_t \fixx v - \left(\oh v_r  + p \psi^{-1}\psi_r v - (q-1) r^{-1}(1-v) \right) \partial_rv \\
  &= v v_{rr} + (q-2) r^{-1}vv_r + 2 (q-1) r^{-2}v(1-v) + p \frac{\psi_r}{\psi}vv_r - 2 p v^2 \frac{\psi_r^2}{\psi^2} \\
  &- \left(\oh v_r  + p \psi^{-1}\psi_r v - (q-1) r^{-1}(1-v) \right) v_r \\
  &= v v_{rr} - \oh v_r^2 -r^{-1}vv_r + 2 (q-1) r^{-2}v(1-v) + (q-1) r^{-1}v_r - 2 pv^2 \frac{\psi_r^2}{\psi^2} \\
  &= v v_{rr} - \oh v_r^2 -r^{-1}vv_r + 2 (q-1) r^{-2}v(1-v) + (q-1) r^{-1}v_r - \oh  pv^2 \frac{h_r^2}{h^2} \\
\end{align*}

Now, let's put the evolution for $\psi$ in terms of $r$.
\begin{align*}
  \partial_t \fixr \psi
  &= \partial_t \fixx \psi - \left(\oh v_r  + p \psi^{-1}\psi_r v - (q-1) r^{-1}(1-v) \right) \psi_r \\
  &= \psi_{ss}
    + \left( p \frac{\psi_s}{\psi} + q \frac{\phi_s}{\phi} \right)\psi_s
    - \psi^{-1}\psi_{s}^2 - (p-1)\psi^{-1} \\
  &- \left(\oh v_r  + p \psi^{-1}\psi_r v - (q-1) r^{-1}(1-v) \right) \psi_r \\
  &= v\psi_{rr} + \oh v\psi_r^2
    + \left( p \frac{\psi_r}{\psi} + q \frac{1}{r} \right) v\psi_r
    - \psi^{-1}v\psi_r^2 - (p-1)\psi^{-1} \\
  &- \left(\oh v_r  + p \psi^{-1}\psi_r v - (q-1) r^{-1}(1-v) \right) \psi_r \\
  &= v\psi_{rr} + \oh v\psi_r^2
    -\psi^{-1}v\psi_r^2 - (p-1)\psi^{-1} \\
  &- \left(\oh v_r  - (q-1) r^{-1} -r^{-1}v\right) \psi_r \\
  &= v\psi_{rr} + \oh v\psi_r^2 - \oh v_r \psi_r + \frac{q-1+v}{r}\psi_r - v \frac{\psi_r^2}{\psi} - (p-1)\frac{1}{\psi}
\end{align*}
Note the cancellation of the $\left( p\frac{\psi_r}{\psi} + q \frac{\phi_r}{\phi} \right)$ occurs because this term comes from a
drift in $x$ which is removed by considering a geometric coordinate $r$.

Finally, derive the evolution of $h = \psi^2$.
\begin{align*}
  \partial_t \fixr h
  &= v h_{rr} - v \frac{h_r^2}{h} + r^{-1}(q-1) h_r + r^{-1}vh_r - (p-1)
\end{align*}

\section{Formal asymptotics before the singularity}\label{before_asymptotics}
In this section we formally derive the asymptotics of a flow into a singular metric of the form assumed in Theorem \ref{main_theorem}.  We work in the $s$ coordinate.  Recall that under Ricci flow,
\begin{align*}
  \partial_t \fixx \psi &= \psi_{ss}
                          + \left( p \frac{\psi_s}{\psi} + q \frac{\phi_s}{\phi} \right)\psi_s
                          - \psi^{-1}\psi_{s}^2 - (p-1)\psi^{-1}, \\
  \partial_t \fixx \phi &= \phi_{ss}
                          + \left( p \frac{\psi_s}{\psi} + q \frac{\phi_s}{\phi} \right)\phi_s
                          - \phi^{-1}\phi_{s}^2 - (q-1)\phi^{-1}, \\
  \partial_t \fixx \log s' &= p\frac{\psi_{ss}}{\psi} + q\frac{\phi_{ss}}{\phi}
\end{align*}
To convert the time derivatives to the $s$ coordinate we may use that for any evolving function $f$,
\begin{align*}
  \partial_t \fixs f
  &= \partial_t \fixx f - \left[ \partial_t \fixx s \right]  \partial_s f  \\
  &= \partial_t \fixx f - \left[ \int_0^s \frac{\partial_t \fixx s'}{s'} ds \right] \partial_s f \\
  &= \partial_t \fixx f - I[\psi, \phi]\partial_s f
\end{align*}
Where in the last line we have named $I[\psi, \phi] = \int_0^s p \frac{\psi_{ss}}{\psi} + q \frac{\phi_{ss}}{\phi} ds$.  Using this we find,
\begin{align}
  \partial_t \fixs \psi &= \psi_{ss}
                          + \left( p \frac{\psi_s}{\psi} + q \frac{\phi_s}{\phi} - I[\psi, \phi]\right)\psi_s
                          - \psi^{-1}\psi_{s}^2 - (p-1)\psi^{-1}
  \\
  \partial_t \fixs \phi &= \phi_{ss}
                          + \left( p \frac{\psi_s}{\psi} + q \frac{\phi_s}{\phi} - I[\psi, \phi]\right)\phi_s
                          - \phi^{-1}\phi_{s}^2 - (q-1)\phi^{-1}
\end{align}

Before the singular time, these functions will have the boundary conditions at $s=0$:
\begin{align}
  \label{eq:14}
  \phi > 0, &\quad \psi = 0,\\
  \partial_s \phi = 0, &\quad \partial_s \psi = 1.
\end{align}
Given this, we rewrite $\psi = s (1 + \tilde \psi)$ where now $\psi$ will have $\tilde \psi_s = 0$ at $s = 0$, if the metric is smooth.  We may integrate $I[\phi, s(1 + \tilde \psi)]$ by parts to find
\begin{align*}
  \left[ \left( p \frac{\psi_s}{\psi} + q \frac{\phi_s}{\phi} \right) - I[\psi, \phi] \right]
  &= \left[
    \frac{p}{s}
    - \int_0^s \left(
    q \frac{\phi_s^2}{\phi^2} + p \left(  \frac{\tilde \psi_s^2}{ (1 + \tilde \psi)^2 } + \frac{2 \tilde \psi_s}{s (1 + \tilde \psi)} \right)
    \right)ds  \right].
\end{align*}
One may then compute,
\begin{align}
  \partial_t \fixs \tilde \psi 
  &=
  \tilde \psi_{ss}
  + p \frac{\tilde \psi_s}{s}
    - 2p \frac{1}{s} \int_0^s \frac{\tilde \psi_s}{s} ds
    + 2(p-1)\frac{\tilde \psi}{s^2} \label{tpsicomp_lin}\\
  &- 2p \frac{\tilde \psi}{s} \int_0^s \frac{\tilde \psi_s}{s(1+ \tilde \psi)} ds
    - 2p \frac{1}{s} \int_0^s \frac{\tilde \psi_s}{s} \left(    \frac{1}{(1+ \tilde \psi)} - 1 \right) ds
      + \frac{(p-1)}{s^2}\left(  \left(1 + \tilde \psi \right) - \frac{1}{1 + \tilde \psi} - 2 \tilde \psi\right) \label{tpsicomp_linrest}\\
  &- \frac{1 + \tilde \psi}{s} \int_0^s \left( q \frac{\phi_s^2}{\phi^2} + p \frac{\tilde \psi_s^2}{(1 + \tilde \psi)^2} \right)ds
    - \tilde \psi_s  \int_0^s \left( q \frac{\phi_s^2}{\phi^2} + p \left(\frac{\tilde \psi_s^2}{(1 + \tilde \psi)^2} + \frac{2 \tilde \psi_s}{s(1+\tilde \psi)}\right) \right)ds   - \frac{\tilde \psi_s^2}{s} \label{tpsicomp_quad},\\
\end{align}
\begin{align}
  \partial_t \fixs \phi
  &= \phi_{ss} + \frac{p}{s}\phi_s  - (q-1)\phi^{-1}\\
  &- \int_0^s
    \left(
    q \frac{\phi_s^2}{\phi^2}
    + p \left(  \frac{\tilde \psi_s^2}{ (1 + \tilde \psi)^2 } +
 \frac{2 \tilde \psi_s}{s (1 + \tilde \psi)} \right)
    \right) \phi_s
    - \frac{\phi_s^2}{\phi}.
\end{align}
Here we have organized the equations so that the first lines are linear.

\subsection{Overview of Formal Asymptotics}
Our inspection of the shape of the Ricci flow starts with what we are most confident in.  Our primary assumption is that the flow develops a Type-I singularity modeled on $\real^{p+1}\times S^q$.  This means that under a rescaled flow, the metric approaches the standard metric on $\real^{p+1} \times S^q$, which is a fixed point for the flow.

In order to study the flow more closely, we expand the solution around the fixed point.  We assume that the solution approaches the fixed point at the same rate as in previously studied cases (The case $p=1$), which gives us an asymptotic expansion.  Our goal is to be able to use this asymptotic expansion to learn something about the ``naked-eye'' final time profile by looking at this asymptotic expansion.  (Note however that the ``neck'' region where this asymptotic expansion is valid becomes a single point at the singular time.)

The asymptotic expansion around the fixed point is actually not enough to tell us about the naked-eye profile.  It is, however, enough to tell us about a region farther from the neck, which still disappears at the singular time.  Then, information from this region is enough to tell us about the naked-eye profile.

\subsection{The neck region}
Our primary assumption is as follows:
\begin{assumption}\label{assumption_main}
  On the submanifold $\{s = 0\}$, and at time $T$, the metric has a type-I singularity modeled on $\real^{p+1} \times S^q$.  Precisely,
  \begin{itemize}
  \item At $s = 0$, $|\Rm| = O\left( \frac{1}{T-t} \right)$
  \item The metrics
      $G(t) = \frac{1}{T-t}(X(t))^*g(t)$
    converge to the soliton metric $G_{sol}$ on $\real^{p+1} \times S^q$, in compact neighborhoods of the submanifold $\{s = 0\}$.  Here $X(t)$ is a family of diffeomorphisms which integrate the soliton vector field.
  \end{itemize}
\end{assumption}
In particular, Assumption \ref{assumption_main} implies the following on the level of the functions $s, \phi, \psi$.  Set:
\begin{align*}
  \sigma = (T-t)^{-1/2}s \quad \Phi = (T-t)^{-1/2}\phi \quad \Psi = (T-t)^{-1/2}\psi \quad \tilde \Psi = \psi
\end{align*}
Then $\Phi \to \sigma$ and $\Psi \to \sqrt{2(q-1)}$ in regions $\{\sigma < A \}$.  We will call such a region $\{ \sigma < A \}$ a neck region.

Notice that because $s$ is a geometric coordinate for the metrics $g$ (and $\sigma$ is a geometric coordinate for $G$) we do not have to worry about the diffeomorphisms $X(t)$.  The soliton metric is given by $\tilde \Psi = 0$, $\Phi = \alpha_q \defeq \sqrt{2(q-1)}$, and is a fixed point of the rescaled system.  Write $\Phi = \alpha (1 + \tilde \Phi)$ so that now $\tilde \Psi = 0, \tilde \Phi = 0$ is a fixed point.  In full, the evolution of $\tilde \Phi, \tilde \Psi$ is
\begin{align}
  \partial_\tau \fixsig \tildehack \Psi
  &= \tildehack \Psi_{\sigma \sigma}
    + p \frac{1}{\sigma} \tildehack\Psi_\sigma
    - \oh \sigma \tildehack \Psi_\sigma
    + \frac{2(p-1)}{\sigma^2}\tilde \Psi
    - \frac{2p}{\sigma}\left[ \int_0^\sigma \frac{\tilde \Psi_\sigma}{\sigma} \right] \\
  &- 2p \frac{\tilde \Psi}{s} \int_0^s \frac{\tilde \Psi_s}{s(1+ \tilde \Psi)} ds
    - 2p \frac{1}{s} \int_0^s \frac{\tilde \Psi_s}{s} \left(    \frac{1}{(1+ \tilde \Psi)} - 1 \right) ds
      + \frac{(p-1)}{s^2}\left(  \left(1 + \tilde \Psi \right) - \frac{1}{1 + \tilde \Psi} - 2 \tilde \Psi\right) \\
  &- \frac{1 + \tilde \Psi}{s} \int_0^s \left( q \frac{\tilde \Phi_s^2}{(1 + \tilde \Phi)^2} + p \frac{\tilde \Psi_s^2}{(1 + \tilde \Psi)^2} \right)ds
    - \tilde \Psi_s  \int_0^s \left( q \frac{\tilde \Phi_s^2}{\tilde \Phi^2} + p \left(\frac{\tilde \Psi_s^2}{(1 + \tilde \Psi)^2} + \frac{2 \tilde \Psi_s}{s(1+\tilde \Psi)}\right) \right)ds   - \frac{\tilde \Psi_s^2}{s} \label{evo_tildepsi_full}\\
\end{align}
\begin{align}
  \partial_\tau \fixsig \tildehack \Phi
  &= \tildehack \Phi_{\sigma\sigma}
    + p \frac{1}{\sigma} \tildehack\Phi_\sigma - \oh \sigma \tildehack\Phi_\sigma + \tildehack \Phi \\
  &+ \oh \left( 1+\tildehack\Phi - \frac{1}{1 + \tildehack \Phi} - 2\tildehack \Phi \right) \\
  &+
    \left[
    \int_0^\sigma q \frac{\tildehack\Phi_\sigma^2}{(1 + \tildehack\Phi)^2} + \frac{p\tilde \Psi_\sigma^2}{(1 + \tilde \Psi)^2 } + \frac{2p\tilde \Psi_\sigma}{\sigma ( 1 + \tilde \Psi)}
    \right]\tildehack\Phi_\sigma
  - \left[ \frac{\Phi_\sigma}{1 + \tildehack\Phi} \right]\tildehack\Phi_\sigma \label{evo_tildephi_full}
\end{align}
Here, the first lines in both evolutions are linear and the others are at least quadratic in $\tilde \Phi$ and $\tilde \Psi$.

We proceed to study these linearizations.  Most familiar is the linearization of the evolution for $\tilde \Phi$.

\subsubsection{The linearization for $\tilde \Phi$.}
We study the operator
\begin{align}
  \partial_\sigma^2 + \frac{p}{\sigma}\partial_\sigma - \oh \sigma \partial_\sigma + 1. \label{philinearization_sigma}
\end{align}
For smoothness of the metric, $\tilde \Phi$ as a function of $\sigma$ must extend to an even function around zero.  On functions with this property the operator \eqref{philinearization_sigma} can be recognized as the operator
\begin{equation*}
 L_\Phi =  \lap_{\real^{p+1}} - \oh \vec{x} \cdot \nabla  + 1
\end{equation*}
acting on a rotationally symmetric function in $\real^{p+1}$.  This operator is self-adjoint on
\begin{align*}
L^2(\real_+, \sigma^{p}e^{\sigma^2/4}) = \text{ rotationally symmetric functions of }L^2(\real^{p+1}, e^{|-\vec{x}|^2/4}).  
\end{align*}
  The eigenvalues of $\lap_{\real^{p+1}} - \oh \vec{x} \cdot \nabla$ in $L^2(\real^{p+1}, e^{-|\vec{x}|^2/4})$ are the $(p+1)$-dimensional Hermite polynomials, which are all given by products of one-dimensional hermite polynomials in each coordinate.  The eigenvalues of $\partial_\sigma^2 + \frac{p}{\sigma}\partial_\sigma - \oh \sigma \partial_\sigma$ come from those hermite polynomials which happen to be rotationally symmetric.  The eigenspaces of $\partial_\sigma^2 + \frac{p}{\sigma}\partial_\sigma - \oh \sigma \partial_\sigma + 1$ (including the $+1$ term which shifts eigenvalues) are as follows:
\begin{itemize}
\item Constants are eigenfunctions with eigenvalue $1$.
\item There is a one-dimensional nullspace.  $f(x_1) = x_1^2 - 2$ is a one-dimensional hermite polynomial, and if $x_1, x_2, \dots, x_{p+1}$ are the coordinates of $\real^{p+1}$ then
  \begin{align*}
    (x_1^2-2) + (x_2^2 -2) + \cdots + (x_{p+1}^2 - 2) = (x_1^2 + x_2^2 + \cdots + x_{p+1}^2) - 2(p+1)
  \end{align*}
  is in the nullspace of $\lap_{\real^{p+1}} - \oh x \cdot \nabla + 1$, and
  \begin{align*}
    \sigma^2 - 2(p+1)
  \end{align*}
  is in the nullspace of
  \begin{align*}
    \partial_\sigma^2 + \frac{p}{\sigma}\partial_\sigma - \oh \sigma \partial_s + 1
  \end{align*}
\item All further eigenspaces are negative.
\end{itemize}

Remember that we are considering a flow in which $\Phi$ approaches $\alpha_q$, i.e. $\tilde \Phi$ approaches zero.  Therefore the constant component of $\tilde \Phi$ should get smaller, and in the $\tau \to \infty$ limit should disappear.  Therefore we expect the nullspace to play the biggest role; this was the case in \cite{AKPrecise} as well.  Lemma \ref{lemma:philin_nullspace} studies the nullspace more explicitly.

\begin{lemma}\label{lemma:philin_nullspace}
  In any neighborhood of zero, the only solutions to
  \begin{align}
    \partial_\sigma^2f + \left(\frac{p}{\sigma} - \frac{\sigma}{2} \right)\partial_\sigma f + 1 = 0 \label{philin_nullspace}
  \end{align}
  which are $C^2$ at zero are multiples of
  \begin{align*}
    f(\sigma) = (\sigma^2 - 2(p+1))
  \end{align*}
  where $k$ is arbitrary.
\end{lemma}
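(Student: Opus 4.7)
I would prove this by applying Frobenius analysis at the regular singular point $\sigma = 0$. Reading the $+1$ in the displayed equation as the identity operator (consistent with the definition of the linearization \eqref{philinearization_sigma} acting on $f$), the ODE is
\begin{equation*}
  f'' + \left( \tfrac{p}{\sigma} - \tfrac{\sigma}{2} \right) f' + f = 0.
\end{equation*}
Multiplying by $\sigma^2$ shows this has a regular singular point at $\sigma = 0$, since $p - \sigma^2/2$ and $\sigma^2$ are analytic there, so the whole Frobenius/Fuchs machinery applies.

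First I would determine the indicial polynomial. Plugging $f(\sigma) = \sigma^r \sum_{n \geq 0} a_n \sigma^n$ with $a_0 \neq 0$ and collecting the lowest-order terms gives $r(r-1) + pr = r(r + p - 1) = 0$, with roots $r_1 = 0$ and $r_2 = 1 - p$. For $p \geq 2$ the second root is $\leq -1$, so the corresponding Frobenius solution behaves like $\sigma^{1-p}$ near zero and is not even continuous, let alone $C^2$; for $p = 1$ the roots coincide and the standard theory produces a second independent solution containing a $\log \sigma$ term, again not $C^2$ at the origin. Hence any $C^2$ solution must come entirely from the $r = 0$ branch.

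Next I would write out the recursion at $r = 0$. Setting $f = \sum_{n \geq 0} a_n \sigma^n$ and matching coefficients of $\sigma^{n-2}$ yields the two-term relation
\begin{equation*}
  n(n + p - 1)\, a_n = \tfrac{n-4}{2}\, a_{n-2}, \qquad n \geq 2,
\end{equation*}
together with the constraint from the $\sigma^{-1}$ coefficient at $n = 1$, namely $p \, a_1 = 0$, which forces $a_1 = 0$ whenever $p \geq 1$. By induction the recursion propagates $a_n = 0$ to every odd $n$. For the even indices, $a_2 = -a_0 / (2(p+1))$, and then the numerator factor $(n-4)$ evaluated at $n = 4$ kills $a_4$; iterating gives $a_n = 0$ for all even $n \geq 4$.

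Putting the two strands together, every $C^2$ solution in a neighborhood of zero must be a constant multiple of $1 - \sigma^2/(2(p+1))$, i.e.\ a multiple of $\sigma^2 - 2(p+1)$, as claimed. The only subtlety is ensuring that the two Frobenius-type solutions really span the full two-dimensional local solution space, so that no $C^2$ solution is missed; this is a direct invocation of Fuchs' theorem at a regular singular point. The recursion computations themselves are routine, so the main conceptual step is the dismissal of the $r = 1-p$ branch on regularity grounds.
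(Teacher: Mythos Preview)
Your proof is correct and follows the same route as the paper: both identify $\sigma=0$ as a regular singular point and invoke the Frobenius method to obtain two independent local solutions, then discard the singular one on regularity grounds. The paper's proof is terse (it simply asserts that Frobenius yields one polynomial solution and one solution blowing up at the origin), whereas you carry out the recursion explicitly and separate the cases $p=1$ (repeated indicial root, logarithmic second solution) and $p\geq 2$ (negative indicial root); this extra detail is fine and arguably clearer. Note incidentally that the paper records the blow-up rate of the second solution as $O(\sigma^{-(p+1)})$, while your indicial computation gives $r_2=1-p$, i.e.\ $O(\sigma^{-(p-1)})$; your exponent is the correct one, though it makes no difference to the conclusion.
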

\begin{proof}
  \eqref{philin_nullspace} is a degree two linear ODE with an isolated regular singular point at $\sigma = 0$.  
  The method of Frobenius yields two independent solutions to \eqref{philin_nullspace} around zero: one is $\sigma^2 -2(p+1)$ and the other blows up at $\sigma = 0$ with order $O(\sigma^{-(p+1)})$.
\end{proof}

\subsubsection{The linearization for $\tilde \Psi$}
The linearization of the evolution for $\tilde \Psi$ is
\begin{align*}
  L_\Psi[f] = \partial_\sigma^2 f + \frac{p}{\sigma}\partial_\sigma f - \frac{\sigma}{2}\partial_\sigma f
  + \frac{2(p-1)}{\sigma^2}f - \frac{2p}{\sigma} \left[ \int_0^\sigma \frac{f_{\sigma}}{\sigma} d\sigma\right].
\end{align*}
This is more complicated because there is a nonlocal term in the linearization.  We can begin by computing it on monomials.  Since $\Psi$ must be even and vanish at zero, we just compute $L_\Psi[f]$ for $f = \sigma^{2k}$, $k\geq 1$.
\begin{align*}
  L_\Psi[\sigma^{2k}]
  &= 2k(2k-1)\sigma^{2k-2} + p2k\sigma^{2k-2} - \frac{1}{2}2k\sigma^{2k} + 2(p-1)\sigma^{2k-2}
    - \frac{2p}{\sigma}\int_0^\sigma 2k\sigma^{2k-2} \\
  &= \left(2k(2k-1) + 2pk + 2(p-1) - \frac{4pk}{2k-1}\right)\sigma^{2k-2} - k\sigma^{2k}
\end{align*}
The pleasant part of the situation is that the operator acts on the monomials as an upper-triangular matrix.
Therefore one can read off the eigenvalues in the span of the monomials.   They are the coefficients of $\sigma^{2k}$ above, that is $-k$ for $k \in \nats , k \geq 1$.  This method also gives a formula for computing the eigenfunctions.

One can also study the functions satisfying $L_\Psi[f]=\lambda f$ by multiplying by $\sigma$ and differentiating with respect to $\sigma$, arriving at
\begin{align}
  \sigma\partial_\sigma^3 f
  + \left( (p+2) - \frac{\sigma^2}{2} \right)\partial_\sigma^2 f
  + \left( \frac{(4p-2)}{\sigma} - (1 + \lambda)\sigma \right) \partial_\sigma f
  - \left( \frac{2(p-1)}{\sigma^2} - \lambda \right)f
  = 0
\end{align}

In any case we find the following.
\begin{lemma}
  \label{lemma:psilin_nullspace}
  The operator $L_\Psi$ has a strictly negative spectrum in $L^2(\real_+, \sigma^pe^{-\sigma^2/4})$.
\end{lemma}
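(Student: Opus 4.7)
The plan is to leverage the upper-triangular action of $L_\Psi$ on even monomials already computed just before the lemma: $L_\Psi[\sigma^{2k}]$ is a linear combination of $\sigma^{2k-2}$ and $\sigma^{2k}$, with the $\sigma^{2k}$ coefficient equal to $-k$. Since the natural smoothness and boundary conditions (coming from smoothness of $\psi=s(1+\tilde\Psi)$) force any candidate eigenfunction to be even and to vanish at $\sigma=0$, restricting to the space spanned by $\{\sigma^{2k}\}_{k\geq 1}$ is not a loss of generality, and on this space the matrix of $L_\Psi$ is triangular with diagonal $(-1,-2,-3,\dots)$. From this I would immediately extract, by back-substitution, a polynomial eigenfunction $f_k = \sigma^{2k}+\text{(lower order)}$ with eigenvalue $-k$ for each $k\geq 1$.

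To promote this algebraic computation to a statement about the spectrum of $L_\Psi$ on $L^2(\real_+,\sigma^p e^{-\sigma^2/4})$, I would first argue elliptic regularity for the local part of $L_\Psi$: if $L_\Psi f=\lambda f$ with $f\in L^2$, then $f$ is smooth away from $\sigma=0$ and, because the operator has a regular singular point at $0$ of the same type as the radial Laplacian in $\real^{p+1}$, the two Frobenius exponents at $0$ are $0$ and $-(p+1)$. Only the former is consistent with $L^2$ (with the weight $\sigma^p$) and with $f(0)=0$ implied by smoothness of $\psi$, so $f$ admits a convergent even power series $f(\sigma)=\sum_{k\geq 1}c_k\sigma^{2k}$ near zero. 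Substituting this expansion into $L_\Psi f=\lambda f$ and matching coefficients using the monomial calculation yields the two-term recursion
\begin{equation}
A_{k+1}\,c_{k+1} = (\lambda+k)\,c_k, \qquad A_{k+1} = 2(k+1)(2k+1)+2p(k+1)+2(p-1)-\tfrac{4p(k+1)}{2k+1}.
\end{equation}

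Now the argument divides. If $\lambda=-n$ for some integer $n\geq 1$, the recursion gives $c_{n+1}=0$, hence $c_k=0$ for all $k>n$, and $f$ is a polynomial (recovering the eigenfunctions above). If $\lambda\notin\{-1,-2,\dots\}$, the numerator $\lambda+k$ is nonzero for all $k$, so either $c_1=0$ (forcing $f\equiv 0$) or the recursion yields $c_{k+1}/c_k\sim k/A_{k+1}\sim 1/(4k)$, which implies $|c_k|\sim C\,4^{-k}/(k-1)!$ and therefore $f(\sigma)\sim C\sigma^2 e^{\sigma^2/4}$ at infinity. This growth is incompatible with membership in $L^2(\real_+,\sigma^p e^{-\sigma^2/4})$, since the integral $\int \sigma^{p+4}e^{\sigma^2/4}\,d\sigma$ diverges. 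Hence the spectrum is contained in $\{-k : k\geq 1\}$, which is strictly negative.

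The step I expect to be the main obstacle is controlling the nonlocal term $-\frac{2p}{\sigma}\int_0^\sigma \frac{f_\sigma}{\tau}d\tau$ in the Frobenius/growth analysis at both endpoints. At $\sigma=0$, one must check that this term respects the indicial equation so that the two Frobenius exponents really are $0$ and $-(p+1)$; the integration-by-parts identity $\int_0^\sigma \frac{f'}{\tau}d\tau = \frac{f(\sigma)}{\sigma}+\int_0^\sigma \frac{f}{\tau^2}d\tau$ lets me rewrite $L_\Psi$ as a purely local second-order operator plus a Volterra-type perturbation that is small near $0$, which should handle this. At infinity, one must verify that the $c_{k+1}/c_k$ asymptotic survives the contribution of the integral term to each coefficient — the key point is that the integral of $\sigma^{2k-1}/\tau$ against the tail of the series contributes at most to finitely many lower order coefficients in each equation, so the tail ratio analysis goes through unchanged. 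Once these are dealt with, the completeness statement (that the polynomial eigenfunctions span the $L^2$ closure of the polynomial subspace) is not strictly needed for the lemma as stated, but would follow from standard density of polynomials in the Gaussian-weighted space.
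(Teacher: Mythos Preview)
Your approach is the same as the paper's at its core: both rest on the upper-triangular action of $L_\Psi$ on the even monomials $\{\sigma^{2k}\}_{k\geq 1}$ with diagonal entries $-k$, from which the eigenvalues are read off. The paper stops there --- it simply states the lemma after the monomial computation, without ruling out non-polynomial $L^2$ eigenfunctions or discussing completeness --- so your recursion-and-growth analysis goes well beyond what the paper actually proves.

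One remark on the obstacle you flag. The paper notes (but does not exploit) a cleaner way to handle the nonlocal term than the Volterra rewriting you propose: multiply the eigenvalue equation $L_\Psi f = \lambda f$ by $\sigma$ and differentiate once in $\sigma$. This kills the integral and produces a purely local third-order linear ODE with a regular singular point at $\sigma=0$, to which Frobenius theory applies directly. That route avoids the need to argue that the integral term is a ``small perturbation'' of the indicial analysis, and the resulting indicial roots and growth at infinity can be read off from standard ODE theory. If you want to make your argument airtight, this is probably the path of least resistance.
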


\subsubsection{The Ansantz for $\tilde \Phi, \tilde \Psi$}
Our assumption that the rescaled metric approaches the soliton $\real^{p+1} \times S^q$ says that $\tilde \Phi$ and $\tilde \Psi$ both approach zero as $\tau \to \infty$.  We make two further assumptions about the rate of this convergence.
\begin{assumption}\label{assume:parabolic_tauinv_convergence}
  The limits
  \begin{align}
    \label{eq:2}
    \tilde \Psi_1(\sigma) \defeq \lim_{\tau \to \infty}\tau \tilde \Psi(\sigma, \tau),
    \quad
    \tilde \Phi_1(\sigma) \defeq \lim_{\tau \to \infty}\tau \tilde \Phi(\sigma, \tau)
  \end{align}
  both exist.  The convergence happens in $C^2$ on any region $\{\sigma < \sigma_\# \}$. 
\end{assumption}
\begin{assumption}\label{assume:parabolic_weightedspace}
  As functions of $\sigma$, $\tilde \Psi_1$ and $\tilde \Phi_1$ are in $L^2(\real_+, \sigma^pe^{-\sigma^2/4})$.
\end{assumption}
By Assumption \ref{assume:parabolic_tauinv_convergence} we can write
\begin{align}
  \label{eq:3}
  \tilde \Psi = \tau^{-1}(\tildehack\Psi_1(\sigma) + \tildehack\Psi_1^{(err)}(\sigma, \tau)),
  \quad
  \tilde \Phi = \tau^{-1}(\tildehack\Phi_1(\sigma) + \tildehack\Phi_1^{(err)}(\sigma, \tau))
\end{align}
where $\Psi_1^{(err)}$ and $\Phi_1^{(err)}$ converge $C^2_{loc}$ to zero as $\tau \to \infty$.  Plugging this into the evolution equations and bounding nonlinear terms shows $L_\Phi[\tilde \Phi_1] = L_{\Psi}[\tilde \Psi_1] = 0$.  Then Lemmas \ref{lemma:philin_nullspace} and \ref{lemma:psilin_nullspace} with Assumption \ref{assume:parabolic_weightedspace} show that for some $k_0$
\begin{align}
  \label{eq:4}
  \tildehack\Psi_1 = 0, \quad \tildehack\Phi_1 = k_0(\sigma^2 - 2(p+1)).
\end{align}
\begin{remark}
  In \cite{AKPrecise}, the authors rigorously found the value of $k_0$ for the case $p=0$.  Formally one can find the value by analyzing the evolution of the inner product
  \begin{align}
    \label{eq:17}
    k_0(\tau) = \int_\real \left(\tilde \Phi(\sigma, \tau) \right) \left(\sigma^2-2(p+1) \right) \left( \sigma^pe^{-\sigma^2/4} \right) d\sigma ,
  \end{align}
  taking into account quadratic terms in the evolution of $\tilde \Phi$.
\end{remark}

\subsubsection{Validity}
We come back to
\begin{align}
  \Psi(\sigma, \tau) &= \sigma \left(  1 + \tau^{-1}\tilde\Psi_1^{(err)}(\sigma, \tau) \right), \label{guess_psi}\\
  \Phi(\sigma, \tau) &= \alpha_q\left( 1 + \tau^{-1}k(\sigma^2 - 2(p+1)) + \tau^{-1}\tilde \Phi_1^{(err)}(\sigma, \tau) \right), \label{guess_phi}
\end{align}
and study the regions where it is valid for $\Psi_1^{(err)}$ and $\Phi_1^{(err)}$ to be small.  Using \eqref{guess_psi}, \eqref{guess_phi} we can derive evolution equations for $\Psi_1^{(err)}$ and $\Phi_1^{(err)}$.  These will be parabolic equations with a source term which is $O((\tau^{-1}\sigma^2)^2, (\tau^{-1}\sigma^2) \to 0)$.

Therefore it is consistent to assume that
\begin{align}
  \label{eq:7}
  \tilde \Psi_1^{(err)}(\sigma, \tau),\, \tilde \Phi_1^{(err)}(\sigma, \tau) = o(1; \tau \to \infty)
  \text{ if } \sigma = o(\sqrt{\tau}, \tau \to \infty).
\end{align}
We study regions where $\sigma = O(\sqrt{\tau})$ in the next section.



\subsection{The Intermediate Region}

When $\sigma \sim \sqrt{\tau}$, the assumption that the error term for the neck approximation $\hat \Phi$ is small is no longer feasible.  Let us introduce scaled functions
\begin{align*}
  \xi = \sigma/\sqrt{\tau} \qquad Y = \Psi/\sqrt{\tau} \qquad \tildehack Y = \tildehack \Psi
  \qquad \text{ (so } Y = \xi (1 + \tilde Y)\text{ )}
\end{align*}
Then
\begin{align*}
  \partial_\tau \fixsig
  &= \partial_\tau \fixxi + (\partial_\tau \xi) \partial_\xi \\
  &= \partial_\tau \fixxi - \oh \tau^{-1}\xi \partial_\xi \\
  \partial_\sigma
  &= (\partial_\sigma \xi) \partial_\xi \\
  &= \tau^{-1/2}\partial_\xi
\end{align*}
Calculate the evolutions.  Every term in the right hand side of $\partial_\tau \fixsig \Psi$ scales to have a $\tau^{-1}$ coefficient except for the $-\oh \sigma \Psi_\sigma$ which just scales to $- \oh \xi \Psi_\xi$.  The evolution of $\Phi$ also has reaction terms with no $\tau^{-1}$ coefficient.
\begin{align}
   \partial_\tau \fixxi \tilde Y
  &= - \oh \xi \tildehack Y_\xi \label{evo_psi_int}\\
  &+\tau^{-1}
    \left(
    + \oh \xi Y_\xi
    + \tilde Y_{\xi\xi}
    + p\frac{1}{\xi}\tilde Y_\xi
    + \frac{(p-1)}{\xi^2}\left( 1+\tilde Y-\frac{1}{1+\tilde Y}\right)
    + \frac{2p}{\xi} \left[ \int_0^\xi \frac{\tilde Y_\xi}{\xi(1+\tilde Y)} \right]
  \right)\nonumber\\
  &+ \tau^{-1}
    \left(
    \frac{\tilde Y_\xi^2}{1 + \tilde Y} 
    +
    \frac{1}{\xi}
    \left[
    \int_0^\xi q \frac{\Phi_\xi^2}{\Phi^2}
    + \frac{p\tilde Y_\xi^2}{(1 + \tilde Y)^2 }
    \right]
    (1 + \tilde Y + \xi \tilde Y_\xi)
    +
    \frac{1}{\xi}
    \left[
    \int_0^\xi\frac{2p\tilde Y_\xi}{\xi ( 1 + \tilde Y)}
    \right]
    (\tilde Y + \xi \tilde Y_\xi)
    \right)\nonumber\\
  &\nonumber\\
  \partial_\tau \fixxi \Phi
  &= - \oh \xi \Phi_\xi - (q-1)\Phi^{-1} + \oh \Phi \label{evo_phi_int}\\
  &+\tau^{-1}\left(
    + \oh \xi \Phi_\xi + \Phi_{\xi \xi} + p \frac{1}{\xi}\Phi_\xi
    \right)
  \nonumber\\
  &+
    \tau^{-1}\left(
    \left[
    \int_0^\xi q \frac{\Phi_\xi^2}{\Phi^2} + \frac{p\tilde Y_\xi^2}{(1 + \tilde Y)^2 } + \frac{2p\tilde Y_\xi}{\xi ( 1 + \tilde Y)}
    \right]\Phi_\xi
    - \left[ \frac{\Phi_\xi}{\Phi} \right]\Phi_\xi
    \right)\nonumber
\end{align}

We assume that $\Phi$ and $\tilde Y$ have limits as $\tau \to \infty$:
\begin{assumption} \label{assume:intermediate_firstorder}
  The limits
  \begin{align}
    \label{eq:8}
    \Phi_{int0}(\xi) = \lim_{\tau \to \infty} \Phi(\tau, \xi)
    \quad \text{ and } \quad
    \tilde Y_{int0}(\xi) = \lim_{\tau \to \infty} \tilde Y(\tau, \xi)
  \end{align}
  exist.  The limit occurs in $C^2$ on regions $\{\xi < \xi_\#\}$.
\end{assumption}
Under Assumption \ref{assume:intermediate_firstorder}, $\Phi_{int0}$ and $\tilde Y_{int0}$ will solve 
\begin{align*}
  0 &= - \oh \xi \tildehack Y_{int0, \xi} \\
  0 &= - \oh \xi \Phi_{int0,\xi} - (q-1) \Phi_{int0}^{-1} + \oh \Phi_{int0}
\end{align*}
so they are 
\begin{align*}
  \tilde Y_{int0} &= k_0 \\
  \tilde \Phi_{int0} &= \sqrt{k_1 \xi^2 + \alpha_q^2}
\end{align*}
(Recall $\alpha_q = \sqrt{2(q-1)}$).

\subsubsection{Matching}
We want the intermediate approximations to be valid at the boundary of the neck region, and match the neck approximations.  Therefore, let us say we hope the intermediate solutions to be valid on regions of the form
$$\{\sigma \geq \sigma_0 \text{ and } \xi < \xi_0 \} = \{\sqrt{\tau} \xi \geq \sigma_0 \text{ and } \xi < \xi_0 \} = \left\{\frac{\sigma_0}{\sqrt{\tau}} \leq \xi \leq \xi_0\right\}$$
Set
\begin{align*}
  \tilde Y_{int} &= k_0 \\
  \Phi_{int} &= \sqrt{k_1 \xi^2 + \alpha_q^2}
\end{align*}
with $k_0$ and $k_1$ to be determined.

First, unravel the definition of $\tilde Y$.
\begin{align*}
  Y_{int} &= \xi (1 + \tilde Y_{int})  = \xi (1 + k_0)\\
  \Psi_{int} &= \sqrt{\tau}Y_{int} = \sigma(1 + k_0)
\end{align*}
Matching $\Psi_{int}$ with $\Psi_{neck} = \sigma$ gives $k_0 = 0$.

Putting $\Phi_{neck}$ in terms of $\xi$ gives
\begin{align*}
  \Phi_{neck} = \alpha_q(1 + k(\xi^2 - 2(p+1)\tau^{-1}))  
\end{align*}
so when $\xi$ is small and $\tau$ is large
\begin{align}
  \label{eq:10}
  \Phi_{neck} \approx \alpha_q, \quad \Phi_{neck, \xi} \approx 0, \quad \Phi_{neck, \xi \xi} \approx 2k\alpha_q, \\
  \Phi_{int} \approx \alpha_q, \quad \Phi_{int, \xi} \approx 0, \quad \Phi_{int, \xi \xi} \approx \frac{k_1}{\alpha_q}.
\end{align}
So we choose $k_1 = 2k\alpha_q^2$ and have
\begin{align*}
  \Phi_{int} = \alpha_q \sqrt{2k \xi^2 + 1}
\end{align*}

\subsection{Outer region}
Both the neck and intermediate regions shrink to the singular submanifold at $t = T$.  Now we attempt to use the intermediate approximations to get information about the solution at time $T$, outside of the singular submanifold.

From our considerations in the intermediate region, with Assumption \ref{assume:intermediate_firstorder} and the matching we can write 
\begin{align}
  \psi &= \left( (T-t)^{1/2}|\log(T-t)|^{1/2} \right) \xi \left( 1 + \tilde Y_{int0}^{(err)} \right) = s\left( 1 + \tilde Y_{int0}^{(err)} \right), \label{psi_from_int} \\
  \phi &=   \sqrt{(T-t)}\alpha_q\left( \sqrt{2k\xi^2 + 1} + \Phi_{int0}^{(err)}\right)
         = e^{-\tau/2}\alpha_q\left( \sqrt{2k\xi^2 + 1} + \Phi_{int0}^{(err)}\right), \label{phi_from_int}
\end{align}
where the error terms $Y_{int0}^{(err)}$ and $\Psi_{int0}^{(err)}$ are $o(1; \tau \to \infty)$ on sets $\{ \xi < \xi_\# \}$.  Recall that $\tau, \xi, s, t$ are related by
\begin{align}
  (T-t)\tau = e^{-\tau}\tau = \frac{s^2}{\xi^2}. \label{xi_s_tau_relation}
\end{align}

We will take $\xi_\#$ to infinity and $s$ to 0, but still have the error terms go to zero.  Let $\tau_\#(\xi_\#)$ be large enough so that as $\xi_\# \to \infty$
\begin{align}
  \label{eq:11}
  Y_{int0}^{(err)}(\xi_\#, \tau=\tau_\#(\xi_\#)) \to 0, \quad   \Phi_{int0}^{(err)}(\xi_\#, \tau=\tau_\#(\xi_\#)) \to 0.
\end{align}
Because the error terms are continuous, it is possible to make $\tau_\#(\xi_\#)$ continuous and increasing.  As a further requirement on $\tau_\#(\xi_\#)$ we ask that
\begin{align}
  \label{eq:16}
  \frac{\log(\xi_\#)}{\log(s_\#)}
  = \frac{ \log \xi_\# }{\log \left(  \sqrt{e^{-\tau_\#}\tau_\# }\xi_\# \right) }
  \to 0
  ,\qquad
  \text{equivalently } \frac{\log \xi_\#}{\tau_\#} \to 0.
\end{align}
Now consider $\xi_\#$, $t_\#$, and $\tau_\#$ as functions of $s_\#$, satisfying
\begin{align}
  e^{-\tau_\#}\tau_\# = \frac{s^2_\# }{\xi_\#^2}. \label{tausharp_xisharp} 
\end{align}
Find an expression for $(T-t_\#) = e^{-\tau_\#}$ by taking the logarithm of both sides of \eqref{tausharp_xisharp} and then dividing $e^{-\tau_\#}\tau_\#$ by $\tau_\#$:
\begin{align}
  \label{eq:15}
  -\tau_\# + \log \tau_\# = -2 \log s_0 - 2 \log \xi_\#
  &\implies \tau_\#(1 + o(1)) = -2 \log s_\#\left(1 - \frac{\log \xi_\#}{\log s_\#} \right) \\
  &\implies \tau_\# = \left(2 \log s_\# \right) \left( 1 + o(1) \right) \\
  &\implies e^{-\tau_\#} = \frac{s^2_\#}{\xi_\#^2} \frac{1}{-2 \log s_\#}\left(1 + o(1) \right)
\end{align}

Now, evaluate \eqref{psi_from_int}, \eqref{phi_from_int} at $s = s_\#, t=t_\#$.
\begin{align}
  \psi(s, t_\#)
  &= s\left( 1 + \tilde Y^{(err)}_{int0}\left(  \xi_\#, t_\#  \right) \right) \\
  &= s\left( 1 + o(1; t_\# \to \infty) \right)\label{psi_sharp_evaluate}
\end{align}
\begin{align}
  \phi(s, t_\#)
  &=
    \frac{s_\#}{\xi_\#}\frac{1}{\sqrt{-2 \log s_\#}} (1 + o(1; t_\# \to \infty))
    \alpha_q\left(\sqrt{2k \xi^2 + 1 } + \Phi_{int0}^{(err)}\left( \xi, \tau_\# \right) \right) \\
  &= \alpha_q \frac{\xi}{\xi_\#} \frac{s_\#}{\sqrt{|\log s_\#|}}
    \left( \sqrt{1 + \frac{1}{2k\xi^2}} + \frac{\Phi_{int0}^{(err)}(\xi, \tau_\#)}{\sqrt{2k}\xi} \right)(1 + o(1; \tau_\# \to \infty) )
    \\
  &= \alpha_q \frac{s}{\sqrt{|\log s|}}
    \left( \sqrt{ \frac{\log s}{\log s_\#} } \right)
    \left( \sqrt{1 + \frac{1}{2k\xi^2}} + \frac{\Phi_{int0}^{(err)}(\xi, \tau_\#)}{\sqrt{2k}\xi} \right)(1 + o(1; \tau_\# \to \infty) )
    \label{phi_sharp_evaluate}
\end{align}

We assume that the value of $\phi(s_\#, t)$ at $t = t_\#$ is a good approximation for its value at $t = T$. 
\begin{assumption}\label{assume:smalltimeleft}
  \begin{align}
    |\psi(s_\#, t_\#) - \psi(s_\#, T)| = o(1; s_\# \to 0) \psi(s_\#, t_\#),
    \quad
    |\phi(S_\#, t_\#) - \phi(s_\#, T) | = o(1; s_\# \to 0) \phi(s_\#, t_\#).
  \end{align}
\end{assumption}

In particular, Assumption \ref{assume:smalltimeleft} implies the asymptotic profile at the singular time $t = T$:
\begin{align}
  \label{eq:20}
  \psi(s, T) = s(1 + o(1; s \to 0))
  , \quad
  \phi(s, T) = \alpha_q \frac{s}{\sqrt{|\log s|}} (1 + o(1; s \to 0)).
\end{align}
The following lemma shows that Assumption \ref{assume:smalltimeleft} is at least true for the linearization of the system in time.
\begin{lemma}\label{lemma:tderiv_outer}
With all assumptions before Assumption \ref{assume:smalltimeleft}, the time derivatives of $\phi$ and $\psi$ at $(s, t) = (s_\#, t_\#)$ satisfy
  \begin{align}
    \label{eq:19}
    (T-t_\#) \cdot \partial_t \fixs \psi (s_\#, t_\#) = o(1)\psi(s_\#, t_\#),
    \quad
    (T-t_\#) \cdot \partial_t \fixs \phi (s_\#, t_\#) = o(1)\phi(s_\#, t_\#)
  \end{align}
\end{lemma}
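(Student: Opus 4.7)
The plan is to rewrite both time derivatives using the intermediate coordinates $(\xi,\tau)$, insert the evolution equations \eqref{evo_psi_int} and \eqref{evo_phi_int}, and exploit the fact that the matched profiles $\tilde Y_{int}=0$ and $\Phi_{int}=\alpha_q\sqrt{1+2k\xi^2}$ are steady states of the leading-order dynamics in $\tau$. Since $\tau=-\log(T-t)$ and $\xi=s\,e^{\tau/2}/\sqrt{\tau}$, the chain rule gives
\begin{equation*}
(T-t)\,\partial_t\fixs \;=\; \partial_\tau\fixxi + \tfrac{\xi}{2}(1-\tau^{-1})\,\partial_\xi.
\end{equation*}
Applied to $\psi=s(1+\tilde Y)$ and $\phi=e^{-\tau/2}\Phi$ this yields
\begin{align*}
(T-t)\partial_t\fixs\psi &= s\bigl(\partial_\tau\fixxi \tilde Y + \tfrac{\xi}{2}(1-\tau^{-1})\partial_\xi \tilde Y\bigr),\\
(T-t)\partial_t\fixs\phi &= e^{-\tau/2}\bigl(-\tfrac{\Phi}{2}+\partial_\tau\fixxi \Phi + \tfrac{\xi}{2}(1-\tau^{-1})\partial_\xi \Phi\bigr).
\end{align*}

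I would then substitute the evolution equations \eqref{evo_psi_int} and \eqref{evo_phi_int}. The leading-order parts of $\partial_\tau\fixxi \tilde Y$ and $\partial_\tau\fixxi\Phi$ are $-\tfrac{\xi}{2}\partial_\xi\tilde Y$ and $-\tfrac{\xi}{2}\partial_\xi\Phi-(q-1)\Phi^{-1}+\tfrac{1}{2}\Phi$. The first combines with the explicit $\tfrac{\xi}{2}(1-\tau^{-1})\partial_\xi\tilde Y$ to leave only a factor of $\tau^{-1}$, and for $\phi$ the $\pm\tfrac{1}{2}\Phi$ and $\pm\tfrac{\xi}{2}\partial_\xi\Phi$ pairs cancel in the same way. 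One is left with
\begin{align*}
(T-t_\#)\partial_t\fixs\psi(s_\#,t_\#) &= s_\#\Bigl(-\tfrac{\xi_\#}{2\tau_\#}\partial_\xi\tilde Y \;+\; \tau_\#^{-1}R_{\tilde Y}\Bigr),\\
(T-t_\#)\partial_t\fixs\phi(s_\#,t_\#) &= e^{-\tau_\#/2}\Bigl(-(q-1)\Phi^{-1} - \tfrac{\xi_\#}{2\tau_\#}\partial_\xi\Phi \;+\; \tau_\#^{-1}R_\Phi\Bigr),
\end{align*}
where $R_{\tilde Y},R_\Phi$ are the bracketed $\tau^{-1}$-remainders of \eqref{evo_psi_int},\eqref{evo_phi_int} and everything is evaluated at $(\xi_\#,\tau_\#)$. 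At the matched profiles one has $\partial_\xi\tilde Y_{int}=0$, $(q-1)\Phi_{int}^{-1}=O(\xi^{-1})=o(\Phi_{int})$, and $\xi\partial_\xi\Phi_{int}/\tau=O(\xi/\tau)=o(\Phi_{int})$ provided $\tau_\#$ is chosen so that $\xi_\#/\tau_\#\to 0$; this is compatible with \eqref{tausharp_xisharp} and \eqref{eq:16} since we are free to enlarge $\tau_\#(\xi_\#)$. Assumption \ref{assume:intermediate_firstorder} then propagates these estimates from the matched profiles to the actual $\tilde Y$ and $\Phi$ via their $C^2$ closeness, so the first two terms on each line are of the desired order.

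The main obstacle is bounding the nonlocal remainders $R_{\tilde Y}$ and $R_\Phi$, each of which contains integrals $\int_0^{\xi_\#}(\ldots)\,d\xi$ straddling the neck scale $\xi=O(\tau^{-1/2})$ and the intermediate scale $\xi=O(1)$. I would split each such integral at $\xi=\sigma_0/\sqrt{\tau}$. On the inner piece the neck asymptotics \eqref{guess_psi}--\eqref{guess_phi} give $\tilde\Psi,\tilde\Phi=O(\tau^{-1})$ and integrability in $\sigma=\sqrt{\tau}\xi$; on the outer piece the intermediate profiles together with the $C^2$ closeness from Assumption \ref{assume:intermediate_firstorder} yield pointwise control of $\tilde Y_\xi/(1+\tilde Y)$, $\Phi_\xi/\Phi$ and the quadratic terms. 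Both pieces give a bound for $R_{\tilde Y},R_\Phi$ that is absorbed by the prefactor $\tau_\#^{-1}$, provided that (by a diagonal extraction, enlarging $\tau_\#(\xi_\#)$ if necessary while respecting \eqref{tausharp_xisharp} and \eqref{eq:16}) the spatial $C^1$-closeness of $\tilde Y$ and $\Phi$ to the matched profiles also holds at the moving endpoint $\xi=\xi_\#$.
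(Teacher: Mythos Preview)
Your approach is correct in spirit but takes a more elaborate route than the paper. You pass to the $(\xi,\tau)$ coordinates, use that $\tilde Y_{int}=0$ and $\Phi_{int}=\alpha_q\sqrt{1+2k\xi^2}$ annihilate the leading $\tau$-dynamics, and then devote most of the effort to splitting the nonlocal integrals across the neck/intermediate transition. This works, but the splitting argument is delicate (singular integrands like $\tilde Y_\xi/\xi$ near $\xi=0$, control at the moving endpoint $\xi_\#$) and forces you to appeal repeatedly to diagonal enlargements of $\tau_\#(\xi_\#)$.

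The paper instead stays in the $s$ coordinate and bounds each term of the evolution directly from \eqref{psi_sharp_evaluate}, \eqref{phi_sharp_evaluate}. The only genuinely nontrivial term is the nonlocal integral $\int_0^s \phi_s^2/\phi^2\,ds$, and here the paper uses a trick you did not: the a priori gradient bound $|\phi_s|\le 1$ (Lemma~\ref{doubly_gradbnd}) together with $\partial_s(-\phi^{-1})=\phi_s/\phi^2$ gives
\[
\int_0^{s_\#}\frac{\phi_s^2}{\phi^2}\,ds \;\le\; \bigl(\max|\phi_s|\bigr)\int_0^{s_\#}\frac{\phi_s}{\phi^2}\,ds \;=\; \frac{1}{\phi(0,t_\#)}-\frac{1}{\phi(s_\#,t_\#)} \;\le\; \frac{C}{\sqrt{T-t_\#}}.
\]
This single line replaces your entire splitting argument and yields the estimate immediately after multiplying by $(T-t_\#)\phi_s$. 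The $\psi$-nonlocal term is handled by the observation that $\psi\approx s$ holds uniformly across the neck and intermediate regions. So your method is valid but heavier; the paper's is shorter because it exploits a global a priori bound rather than regional asymptotics.
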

\begin{proof}
  The evolution for $\psi$ and $\phi$ (after performing an integration by parts) is
  \begin{align}
  \partial_t \fixs \psi &= \psi_{ss}
                          +
                          \left(
                          p \frac{\psi_s}{\psi}
                          - p \int_0^s \frac{\psi_{ss}}{\psi} ds - q\int_0^s \frac{\phi_{s}^2}{\phi^2} ds
                          \right)\psi_s
                          - \psi^{-1}\psi_{s}^2 - (p-1)\psi^{-1} ,
  \\
  \partial_t \fixs \phi &= \phi_{ss}
                          +
                          \left(
                          p \frac{\psi_s}{\psi}
                          - p \int_0^s \frac{\psi_{ss}}{\psi} ds - q\int_0^s \frac{\phi_{s}^2}{\phi^2} ds
                          \right)\phi_s
                          - \phi^{-1}\phi_{s}^2 - (q-1)\phi^{-1} .
  \end{align}
  To evaluate the terms involving just derivatives of $\phi$ and $\psi$ we can use \eqref{psi_sharp_evaluate} and \eqref{phi_sharp_evaluate}.
  For the nonlocal terms, we need more.  To evaluate the nonlocal term involving $\psi$, note that $\psi \approx s$ is valid in the parabolic region as well.  To evaluate $\int_0^s \frac{\phi_s^2}{\phi^2} ds$, we can apply Cauchy-Schwarz and integrate:
  \begin{align}
    \left| \int_0^s \frac{\phi_s^2}{\phi^2} \right| (s, t_\#)
    &\leq \left( \max_{[0,s]} \left|\phi_s(s, t_\#)\right| \right)  \left| \int_0^s \frac{\phi_s^2}{\phi^2} \right| \\
    &= \left( \max_{[0,s]} \left|\phi_s(s, t_\#)\right| \right)  \left| \frac{1}{\phi(0)} - \frac{1}{\phi(s)} \right| \\
    &\leq 1 \cdot 2 \alpha_q \frac{1}{\alpha_q \sqrt{T-t_\#}}(1 + o(1; t_\# \to \infty))
  \end{align}
\end{proof}
\subsubsection{Conclusion}
Our conjecture is thus as follows:  consider any Ricci flow in the space of metrics we are considering, which has a type-I singularity at $s=0, t=T$ modeled on the standard soliton on $R^{p+1}\times S^q$ and which is either compact or has reasonable growth at infinity.  Then the limit of the metrics as $t \to T$ will have the form
\begin{align}
  \psi &= s (1 + o(1; s \to 0)) \label{final_profile_psi},\\
  \phi &= \alpha_q \sqrt{k} \frac{s}{\sqrt{|\log s|}} \left( 1 + o(1; s \to 0) \right) \label{final_profile_phi}.
\end{align}

This is an unsurprising conclusion if one considers the stability of $\real^{p+1}$ under Ricci flow, and compares with previous results in the $p=0$ case.  In fact the only effect that the value of $p$ has, on the level of our asymptotics, is in on term in the neck region.

\bibliographystyle{amsalpha}
\bibliography{bibliography}

\end{document}